\documentclass[a4paper,10pt]{article}

\usepackage[T1]{fontenc}

\usepackage[english,francais]{babel}

\usepackage{amsmath,amsthm,amssymb}
\usepackage{graphicx}
\usepackage[all]{xy}
\usepackage{psfrag}

\usepackage{color}
\usepackage{epsfig}
\usepackage{latexsym}

\newtheorem{thm}{Theorem}[section]
\newtheorem{cor}[thm]{Corollary}
\newtheorem{lem}[thm]{Lemma}

\newtheorem{prop}[thm]{Proposition}
\newtheorem{notn}[thm]{Notation}
\newtheorem{defn}[thm]{Definition}

\theoremstyle{definition}
\newtheorem{rem}[thm]{Remark}
\newtheorem{rems}[thm]{Remarks}
\newtheorem*{acknowledgement}{Acknowledgements} 

\numberwithin{figure}{section}

\newcommand{\bs}{\symbol{92}}

\newcommand{\cerc}{{\mathbb{S}^1}}
\newcommand{\cc}{\mathbb{C}}
\newcommand{\rr}{\mathbb{R}}
\newcommand{\zz}{\mathbb{Z}}

\newcommand{\ctg}{T^{\ast}}

\newcommand{\hnu}{\widehat{\nu}}
\newcommand{\sph}{\mathbb{S}}

\newcommand{\cU}{\mathcal{U}}

\newcommand{\cM}{\mathcal{M}}
\newcommand{\cL}{\mathcal{L}}
\newcommand{\cV}{\mathcal{V}}
\newcommand{\cH}{\mathcal{H}}

\newcommand{\bv}{\overline{v}}
\newcommand{\uu}{\underline{u}}
\newcommand{\uv}{\underline{v}}
\newcommand{\uw}{\underline{w}}

\newcommand{\tc}{\widetilde{c}}
\newcommand{\td}{\widetilde{d}}
\newcommand{\ti}{\widetilde{\imath}}
\newcommand{\tell}{\widetilde{\ell}}
\newcommand{\tp}{\widetilde{p}}
\newcommand{\tz}{\widetilde{z}}
\newcommand{\tx}{\widetilde{x}}
\newcommand{\ty}{\widetilde{y}}

\newcommand{\bL}{\bar{L}}
\newcommand{\bA}{\bar{A}}
\newcommand{\tL}{\widetilde{L}}
\newcommand{\tcL}{\widetilde{\mathcal{L}}}
\newcommand{\tM}{\widetilde{M}}
\newcommand{\tcM}{\widetilde{\mathcal{M}}}
\newcommand{\tJ}{\widetilde{J}}
\newcommand{\tgamma}{\widetilde{\gamma}}
\newcommand{\tvarphi}{\widetilde{\varphi}}

\newcommand{\hv}{\widehat{v}}
\newcommand{\hJ}{\widehat{J}}
\newcommand{\hcM}{\widehat{\mathcal{M}}}
\newcommand{\hLambda}{\widehat{\Lambda}}

\DeclareMathOperator{\im}{Im}

\DeclareMathOperator{\modulo}{mod}

\DeclareMathOperator{\Cal}{Cal}

\title{Obstructions to the existence of monotone Lagrangian embeddings
    into cotangent bundles of manifolds fibered over the circle} 
\author{Agn\`es GADBLED
\thanks {MSC classification: 57R17, 57R58, 57R70, 53D12.
\newline Keywords: Lagrangian embeddings, Floer homology, Novikov
homology.}
}
\date{\today}

\begin{document}
\selectlanguage{english}
\maketitle

\begin{abstract}
 We extend the constructions and results of Damian \cite{Mihai} to get
 topological obstructions to the existence of closed monotone
 Lagrangian embeddings into the cotangent bundle of a space which is
 the the total space of a fibration over the circle.
\end{abstract}

\section{Introduction}
\label{sec:intro}

Let $M$ be a closed manifold and $\pi: \ctg M \rightarrow M$ its
cotangent bundle. Denote by $\lambda_M$ the Liouville one-form of
$M$ and  $\omega_M= d \lambda_M$ the canonical symplectic form on
$\ctg M$.

We are interested in compact Lagrangian submanifolds in the cotangent
bundle $\ctg M$. Only a few types of examples are known:
\begin{enumerate}
\item the zero section or more generally the graph $L_{f}$ of a
  function $f~:~M~\rightarrow~\rr$;
\item \label{item:2}any Hamiltonian image of $L_{f}$
  (i.e. $L=\varphi_{1}(L_{f})$ where $(\varphi_{t})$ is a Hamiltonian  
  isotopy);
\item any image of $L_{f}$ by a symplectic isotopy (as in \ref{item:2} but
  with $(\varphi_{t})$ a symplectic isotopy);
\item the ``local'' Lagrangian submanifolds: any Lagrangian
  submanifold of $\cc^n$ can be embedded in a Darboux chart 
  $U \stackrel{\sim}{\rightarrow} \cc^{n}$ of $\ctg M$. 
\end{enumerate}

Note that the two first types of examples have the additional property
of being exact (that is, the restriction of the Liouville one-form on
the Lagrangian submanifold is exact). It is conjectured (see
\cite{MR890489}) that the examples (1) and (2) are the only possible
examples of exact Lagrangian submanifolds in $\ctg M$ and much work
has been done to prove this conjecture. It has been proved for 
$L = M = S^{2}$ (Hind \cite{MR2060197}) but in general, we know only
topological restrictions on the exact Lagrangian embeddings $i: L
\rightarrow \ctg M$ of a closed manifold~$L$. In the following, 
$f : L \rightarrow M$ denotes the composition $\pi \circ i$.

\begin{enumerate}
\item (Audin \cite{MR966952}) If $L$ and $M$ are orientable, then 
  $\chi(L) = \deg(f)^2 \chi(M)$ and the same is true 
  modulo $4$ if $L$ and $M$ are not orientable.
\item (Lalonde and Sikorav \cite{MR1090163}) The index
  $[\pi_{1}(M):f_\star (\pi_{1}(L))]$ is finite.
\item (Viterbo \cite{MR1726235}) If $M$ is simply connected, then
  $L$ cannot be an Eilenberg-MacLane space.
\item (Fukaya, Seidel and Smith \cite{MR2385665}, Nadler \cite{Nadler},
  see also Buhovski \cite{Lev}) If $M$ is simply connected and $L$ is
  spin with zero Maslov class, then the projection $f$ has degree 
  $\pm 1$, and induces an isomorphism $H^{\star}(L,K) \simeq
  H^{\star}(M,K)$ for any field $K$ of characteristic not equal to~$2$.
\item (Ritter \cite{Ritter}) If $M$ is simply connected, then 
  $\pi^\star: H^{2}(M) \rightarrow H^{2}(L)$ is injective 
  and the image of $f_\star: \pi_2(L) \rightarrow \pi_2(M)$ 
  has finite index.
\item (Damian \cite{Mihai}) If $M$ has dimension $n \geq 3$ and is the 
  total space of a fibration over $\cerc$, we have:\\
  a) For any finite presentation 
  $\langle g_{1}, g_{2}, \ldots, g_{p}\, |\, r_{1}, r_{2}, 
  \ldots, r_{q} \rangle$ of the fundamental group $\pi_{1}(L)$, 
  $p-q\leq~1$. \\
  b) The fundamental group $\pi_{1}(L)$ is not isomorphic to the free 
  product $G_{1}\ast G_{2}$ of two non-trivial groups.
\end{enumerate}

The idea of the proof of Damian is the following. On the one hand, if
$M$ is the total space of a fibration $p : M \rightarrow \cerc$ over
the circle, then one can use the pull-back $\alpha = p^\star d \theta$
of the one-form $d \theta$ on $\cerc$ to displace an exact Lagrangian
submanifold $L$ of $\ctg M$ from itself by a symplectic isotopy
(defined by $\varphi_t (q,p) = (q , p + t \alpha)$).

On the other hand, given a symplectic isotopy $(\varphi_t)$, Damian
has constructed a Floer-type complex $C(L,\varphi_t)$ spanned by the
intersection points of $L$ and $\varphi_1(L)$, with coefficients in a
Novikov ring associated to $\pi_1(L)$ and endowed with a differential
which is an analogue in the Lagrangian Floer theory of the
Morse-Novikov differential. The homology of this complex only depends
on the flux $[ \varphi_1^\star \lambda_M - \lambda_M ] = u$ of
$(\varphi_t)$. It is called the Floer-Novikov homology of $L$ and
denoted $FH(L,u)$. 

Damian has proved that this homology $FH(L,u)$ 
is isomorphic to the Novikov homology $H_\star(L,f^\star u)$ 
of $L$. The Novikov homology of $L$ must then be trivial when 
$\varphi_t$ is the isotopy induced by $\alpha$, that is when 
$u = [\alpha]$, and this gives the obstructions on the fundamental 
group of $L$.\\

Here we are interested in the more general case of monotone Lagrangian
submanifolds of $\ctg M$. In the usual sense, a Lagrangian submanifold
$L$ is monotone (on the disks) if there exists a non-negative constant
$K_L$ such that: 
 \begin{equation}
    \label{eq:MonD}
    \text{for all } u \in \pi_2(\ctg M,L), \int_u \omega_M = K_L \;
    \mu_L (u)
  \end{equation}
where $\mu_L$ denotes the Maslov class of $L$ in $\ctg M$.

For instance, any local Lagrangian submanifold which is monotone in  
$\cc^n$ is also monotone in $\ctg M$ (see Remark \ref{reme:sphmon} (iii)).

We would like to know if there are also ``global'' monotone 
Lagrangian submanifolds in $\ctg M$. It is possible to get 
topological obstructions on the monotone Lagrangian embeddings by 
extending the construction of the 
Floer-Novikov type complex of Damian to the monotone case. In order 
to carry out this construction, we need a stronger monotonicity 
assumption:

\begin{defn}
\label{defn:mon}
  A Lagrangian submanifold of $\ctg M$ is said to be monotone on the 
  loops if there exists a non-negative constant $k_L$ such that:
  \begin{equation}
    \label{eq:MonL}
    \text{for all } \gamma \in \pi_1(L), \int_{\gamma} \lambda_M = 
    k_L \; \mu_L (\gamma)
  \end{equation}
where $\mu_L$ denotes the Maslov class of $L$ in $\ctg M$.
\end{defn}

\begin{rems}
\label{reme:sphmon}
\begin{description}
\item[(i)] We recover the exact case when $k_L = 0$.
\item[(ii)] If $L$ is a Lagrangian submanifold of $\ctg M$ which is
  monotone on the loops, then it is monotone in the usual sense
  (i.e. on the disks) with the same constant $k_L = K_L$.
\item[(iii)] Note that the converse of \textbf{(iii)} is not
  necessarily true in general. It is true for instance if $M$ is
  simply connected. 
\item[(iv)]  This definition of monotone Lagrangian submanifold was
  already used by Polterovich \cite{MR1109663} for Lagrangian
  submanifolds of $\cc^n$, but in this case, it coincides with the
  usual definition.
\item[(v)] This assumption is necessary to bound from above the energy
  of solutions having the same Maslov index (see
  Section~\ref{sec:Floerdiff}).
\end{description}
\end{rems}

\begin{notn}
  If $L$ is a Lagrangian submanifold in $\ctg M$ which is monotone on
  the disks, we will call Maslov number of $L$ and denote $N_{L}$ the 
  non-negative generator of the subgroup 
  $\langle \mu_{L} , \pi_{2}(L) \rangle$ of $\zz$.
\end{notn}

As in the exact case, we will use a suitable version of Floer-Novikov
homology. The differences are the following: 
\begin{description}
\item[(i)] There is no action functional, so we will have to work with
  the action one-form (see Remark~\ref{rem:fonctionelle}).
\item[(ii)] The fact that $[\pi_1(M): f_\star (\pi_1(L))]$ is finite 
does not hold 
in the monotone case without further assumption on the 
Maslov class of $L$. For instance, for any local monotone Lagrangian 
submanifold in a Darboux chart, 
$f_\star : \pi_1(L) \rightarrow \pi_1(M)$ is 
trivial and the index is not finite unless $\pi_1(M)$ is finite.
\item[(iii)] In the monotone case, we have to take into account the 
bubbling of $J$-holomorphic disks. This makes the definition of
Floer-Novikov homology more intricate.
\item[(iv)] There are also differences on more technical points. 
For instance, in the proof of invariance, we cannot use 
an extension of a symplectic isotopy of $\ctg L$ to $\ctg M$ as in 
the exact case.
\item[(v)] In the monotone case, the Floer homology is not always 
isomorphic to the Novikov homology $H_\star(L,f^\star u)$. We will 
prove that it is the limit of a spectral sequence 
(see~Theorems~\ref{thm:suitespecintro} and~\ref{thm:suitespec}, this is
a ``Novikov version'' of the spectral sequence described by Biran
in~\cite{Birannew} for the usual Lagrangian Floer homology).
\end{description}

\begin{thm}
\label{thm:suitespecintro}
Let $u$ be an element of $H^1(M)$.
Assume that the Lagrangian submanifold $L$ is monotone on the loops, 
and of Maslov number $N_L \geq 2$.
There exists a spectral sequence $\{E_{r}^{p,q},d_{r}\}$ satisfying 
the following properties:
\begin{description}
\item[(1)] $E_{0}^{p,q} = C_{p+q-pN_L}(L,f^\star u) \otimes A^{pN_{L}}$ and 
$d_0=\partial_0 \otimes 1$, where $\partial_0$ is the Morse-Novikov 
differential;
\item[(2)] $E_{1}^{p,q} = H_{p+q-pN_L}(L,f^\star u) \otimes A^{pN_{L}}$ and  
$d_1=[\partial_1] \otimes \tau$ where 
$$[\partial_1]: H_{p+q-pN_L}(L,f^\star u) \rightarrow 
H_{p+1+q-(p+1)N_L}(L,f^\star u) \,;$$
\item[(3)] $\{E_{r}^{p,q},d_{r}\}$ collapses at the page $\kappa + 1$, 
  where $\kappa = [\frac{\dim(L)+1}{N_L}]$ and the spectral sequence 
  converges to $FH(L,u)$, i.e.  
  $$\bigoplus_{p+q=\ell} E_{\infty}^{p,q} \cong FH^{\ell(\modulo
    N_{L})}(L,u).$$
\end{description}
\end{thm}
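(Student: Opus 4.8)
The plan is to obtain $FH(L,u)$ as the homology of a single filtered complex and to read the spectral sequence off that filtration, following the scheme of Biran \cite{Birannew} but in the Novikov framework of Damian \cite{Mihai}. The input is the Floer--Novikov complex built in Section~\ref{sec:Floerdiff}: a module $C(L,u)$ over the Novikov coefficients tensored with the graded ring $A$, whose differential counts $J$-holomorphic configurations weighted by homotopy class and symplectic area. Since $L$ is monotone on the loops and $N_L \geq 2$, the area of each contributing configuration is proportional to its Maslov index, so the differential splits as a \emph{finite} sum $\partial = \sum_{k \geq 0} \partial_k \otimes \tau^k$, where $\partial_k$ gathers the configurations carrying $k$ disk bubbles, of total Maslov index $kN_L$, and $\tau$ is the element of $A$ raising the grading by $N_L$. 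First I would justify this splitting: the decisive analytic facts are that monotonicity with $N_L \geq 2$ rules out nonconstant disks of Maslov index $< N_L$ and excludes sphere bubbling, while Gromov compactness together with the dimension/index count bounds the number of bubbles on a rigid trajectory. This finiteness is precisely what will later force the collapse.

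Next I would equip $C(L,u)$ with the decreasing filtration $F^p$ consisting of the elements of $\tau$-power at least $p$. Because each term $\partial_k \otimes \tau^k$ sends $F^p$ into $F^{p+k} \subseteq F^p$, the differential $\partial$ preserves the filtration, and the differential induced on the associated graded is exactly the bubble-free part $\partial_0 \otimes 1$. The indexing $E_0^{p,q} = C_{p+q-pN_L}(L,f^\star u) \otimes A^{pN_L}$ is then the bookkeeping of Morse degree against $A$-degree, with total degree $p+q$. The genuine content of item (1) is the identification of $\partial_0$ with the Morse--Novikov differential of $L$ twisted by $f^\star u$: this is the standard comparison between ``thin'' Floer strips and gradient trajectories, carried out here over the non-compact Novikov covering, and it yields $(E_0^{\bullet,\bullet},d_0) = (C_\star(L,f^\star u) \otimes A, \partial_0 \otimes 1)$.

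Taking $\partial_0$-homology gives $E_1^{p,q} = H_{p+q-pN_L}(L,f^\star u) \otimes A^{pN_L}$, which is item (2). A diagram chase in the filtered complex identifies the induced first differential with the map induced by $\partial_1$ on Novikov homology, tensored with multiplication by $\tau$; that is, $d_1 = [\partial_1] \otimes \tau$, and the degree shift $[\partial_1]\colon H_{p+q-pN_L} \to H_{p+1+q-(p+1)N_L}$ is forced by the fact that $\partial_1$ lowers the Morse degree by $N_L - 1$ while $\tau$ raises the $A$-degree by $N_L$.

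Finally, convergence and collapse are formal once the filtration is seen to be bounded. In total degree $\ell = p+q$ the Morse degree $\ell - pN_L$ must lie in $\{0,\dots,\dim(L)\}$, which confines $p$ to a window of length $\kappa = [\frac{\dim(L)+1}{N_L}]$; since $d_r$ shifts $p$ by $r$ (and total degree by $1$), it cannot keep both source and target inside this window once $r > \kappa$, so $d_r = 0$ there and the sequence collapses at page $\kappa+1$. Boundedness also guarantees that the spectral sequence converges to $H_\star(C(L,u),\partial) = FH(L,u)$, with the stated $\zz/N_L$-graded limit. The main obstacle is the analytic package behind the splitting of $\partial$ and the identification of $\partial_0$: controlling the bubbling, matching the bubble-free moduli spaces with Morse--Novikov trajectories on the covering, and ensuring the area/index bound is uniform enough both for the Novikov sums to converge and for the filtration to be bounded. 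Everything after that is the formal algebra of a bounded filtered complex.
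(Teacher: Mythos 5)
There is a genuine gap, and it is located exactly where you wave at ``the analytic package'': your plan is to filter the flux-$u$ complex $C(L,u)$ itself and read the spectral sequence off that filtration, but this complex cannot carry the required structure. The complex of Section~\ref{sec:Floerdiff} is spanned by the intersection points $L\cap\varphi_1(L)$ of a genuinely large isotopy (its flux $u$ is a fixed, nonzero class), so its generators have only a $\zz/N_L$-grading and its differential admits no canonical splitting $\partial=\sum_k\partial_k\otimes\tau^k$ with $\partial_0$ the Morse--Novikov differential; the ``thin strips = gradient lines'' comparison you invoke requires a $C^2$-small perturbation of the identity, which is incompatible with prescribing a nonzero flux. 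The point is not technical fussiness: in the paper's main application, $M$ fibers over $\cerc$ and $\varphi_1$ displaces $L$, so $C(L,u)$ is the \emph{zero} complex, while the claimed $E_1$-page $H_\star(L,f^\star u)\otimes A$ is nonzero. No filtration of $C(L,u)$ can therefore produce this spectral sequence. (Also, your description of $\partial_k$ as counting ``configurations carrying $k$ disk bubbles'' is off even in the correct setting: $\partial_k$ counts honest rigid strips that leave a Darboux neighbourhood, of Maslov--Viterbo index $kN_L$ plus the index difference; bubbles are what monotonicity and $N_L\geq 2$ \emph{exclude}, not what is counted.)

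What the paper actually does, and what your proposal is missing, is the reduction to a complex on which the $\zz$-graded decomposition exists, together with the identification of its homology with $FH(L,u)$. The filtered complex is built from an \emph{exact} isotopy $dg+dH_t$ with Oh's Hamiltonian $\cH=h\circ\pi_L$ ($h$ a $C^2$-small Morse function in a Darboux neighbourhood $\cU$), so that generators are critical points of $h$ with their Morse indices, Oh's index computations \cite{MR1389956} give $\partial=\partial_0+\dots+\partial_\kappa$, the Novikov coefficients enter through the covering $\bL\rightarrow L$ and the ring $\Lambda_u$ (not through the isotopy), and Biran's algebra \cite{Birannew} then applies verbatim -- this much matches your formal skeleton. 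The non-formal part is proving that the homology of \emph{that} complex is $FH(L,u)$: this requires the continuation morphism $\Gamma^0_\star$ of Proposition~\ref{prop:42b} relating the exact complex to the flux-$\sigma\eta$ complex, the rescaling invariance $FH(\bL,(1+\sigma)u)\simeq FH(\bL,u)$ of Proposition~\ref{prop:42a} -- whose proof rests on the delicate energy estimates of Proposition~\ref{prop:comp4} and Lemma~\ref{lem:4.4}, including the choice of $\eta$ vanishing near the zeros -- and finally the connectedness (nonempty, open and closed in $\sigma$) argument that upgrades ``$\sigma$ small'' to $\sigma=1$, plus the lift from $\Lambda_u$-complexes for $\bL$ to $\Lambda_{f^\star u}$-complexes for $L$ (Section~\ref{sec:surfstaru}). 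In your proposal, convergence to $FH(L,u)$ is declared ``formal once the filtration is bounded''; in reality it is the main content of the theorem, and boundedness of the filtration is the easy part.
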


Here $A$ is the $\Lambda_{f^\star u}$-module 
$A = \Lambda_{f^\star u}[T,T^{-1}]$ of Laurent 
polynomials with coefficients in $\Lambda_{f^\star u}$ 
(see section~\ref{sec:Nov}) 
and $\tau_i :A \rightarrow A$ 
is the multiplication by~$T^i$. The degree of $T$ is 
equal to $N_L$ and $A^{pN_L} =  \Lambda_{f^\star u} \, T^p$. \\

In particular, when $N_L \geq \dim(M) + 2$, the spectral sequence 
above collapses at the first page and the Floer-Novikov homology is 
equal to the Novikov homology $H(L,f^\star u)$.\\

Using the techniques of Damian, we prove:

\begin{thm}
\label{thm:princ}
Let $M$ be a closed manifold which is the total space of a fibration 
$p: M \rightarrow \cerc$ on the circle.
Let $L$ be a Lagrangian submanifold of $\ctg M$ which is monotone 
on the loops.
Assume either that $N_L \geq \dim(M)+1$ or ($N_L = \dim(M)$ and 
$[\pi_1(M):\pi_1(L)]$ is finite). We have:
\begin{description}
\item[(i)] If $\langle g_1,g_2, \dots , g_p | r_1,r_2, \dots , r_q \rangle$ 
is a finite presentation of $\pi_1(L)$, then $$p-q \leq 1.$$
\item[(ii)] The fundamental group $\pi_1(L)$ is not isomorphic to a  
free product $G_1 \star G_2$ of two non trivial groups.
\end{description}
\end{thm}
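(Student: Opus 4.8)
The plan is to combine a displacement argument with the spectral sequence of Theorem~\ref{thm:suitespecintro}, in the spirit of Damian~\cite{Mihai}. Write $n=\dim M=\dim L$. Since $p:M\to\cerc$ is a fibration, $\alpha=p^\star d\theta$ is a nowhere-vanishing closed one-form on $M$; put $u=[\alpha]\in H^1(M)$. The induced isotopy $\varphi_t(q,p)=(q,p+t\alpha)$ is symplectic with flux $[\varphi_1^\star\lambda_M-\lambda_M]=u$. Because $M$ is compact and $\alpha$ is nowhere zero, $|\alpha|$ is bounded below, so for $t$ large the translation pushes the compact Lagrangian $L$ off itself, $\varphi_t(L)\cap L=\emptyset$, and the Floer complex is empty. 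As $FH(L,\cdot)$ depends only on the flux class, the rescaling $u\mapsto tu$ merely reparametrising the Novikov variable, I conclude $FH(L,u)=0$.

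If $\pi_1(L)$ is finite both conclusions are immediate: a finite group has $p-q\leq 0$ for every finite presentation (its abelianisation has rank $\geq p-q$), and it is never a nontrivial free product. So assume $\pi_1(L)$ infinite; the crux is then to upgrade $FH(L,u)=0$ to the vanishing of the Novikov homology $H_\star(L,f^\star u)$, and for this I first claim $f^\star u\neq 0$. When $N_L=n$ the hypothesis that $[\pi_1(M):f_\star\pi_1(L)]$ is finite gives it at once: a finite-index subgroup of $\pi_1(M)$ has nonzero image in $\pi_1(\cerc)=\zz$ under $p_\star$, so $f^\star u=(p\circ f)^\star[d\theta]\neq 0$. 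When $N_L\geq n+1$ I argue by contradiction: were $f^\star u=0$, then $\Lambda_{f^\star u}$ would be the untwisted group ring and $H_\star(L,f^\star u)\cong H_\star(\widetilde{L})$; as $\pi_1(L)$ is infinite, $\widetilde{L}$ is a noncompact $n$-manifold, whence $H_n(L,f^\star u)=0$ while $H_0(L,f^\star u)\neq 0$. In the spectral sequence the only differential that can reach the bottom class in $H_0$ has source $H_n$ (when $N_L=n+1$) or does not exist (when $N_L\geq n+2$); since $H_n=0$ that class is a permanent cycle, so $FH(L,u)\neq 0$, a contradiction.

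Granting $f^\star u\neq 0$, the full Novikov homology vanishes formally. Choosing $\gamma\in\pi_1(L)$ with $f^\star u(\gamma)\neq 0$, the element $\gamma-1$ is invertible in $\Lambda_{f^\star u}$ (the geometric series $-(1+\gamma+\gamma^2+\cdots)$ converges, the valuations tending to $+\infty$), so the coinvariants $H_0(L,f^\star u)$ vanish; dually, by Poincar\'e duality over the coefficient field, $H_n(L,f^\star u)\cong H^0(L,f^\star u)=0$. By property (2) of Theorem~\ref{thm:suitespecintro} the differential $d_r$ lowers the homological degree by $rN_L-1\geq N_L-1\geq n-1$; with the only possibly nonzero groups in degrees $1,\dots,n-1$, every $d_r$ out of such a degree lands in degree $\leq 0$ and every $d_r$ into it issues from degree $\geq n$, so $H_0=H_n=0$ forces all differentials to vanish. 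Hence $E_1=E_\infty$, and convergence gives $FH(L,u)\cong\bigoplus_k H_k(L,f^\star u)$, so $H_k(L,f^\star u)=0$ for all $k$.

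It remains to pass from this acyclicity to the statements on $\pi_1(L)$, exactly as in Damian~\cite{Mihai}. Feeding a presentation $2$-complex of $\pi_1(L)$ into the Novikov chain complex, acyclicity makes the boundary $\partial_1$ surjective with $\ker\partial_1=\operatorname{im}\partial_2$, and a rank count over the fraction field of $\Lambda_{f^\star u}$ yields $p-q\leq 1$; a Mayer--Vietoris computation over the Novikov ring shows that a nontrivial free product $G_1\star G_2$ would force $H_1(L,f^\star u)\neq 0$. The main obstacle in the whole scheme is the boundary range $N_L\in\{n,n+1\}$, where the Biran-type spectral sequence genuinely carries a nonzero differential between the extreme degrees: the argument closes only once $f^\star u\neq 0$ has been secured---by the finite-index hypothesis when $N_L=n$, by the survival argument when $N_L=n+1$---after which that differential is forced to die.
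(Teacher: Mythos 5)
Your overall strategy is the paper's own: displace $L$ by the lift of a nowhere-vanishing closed one-form to force $FH(L,u)=0$, feed this into the spectral sequence of Theorem~\ref{thm:suitespecintro} to kill Novikov homology, and conclude via Proposition~\ref{prop:prelibre}. Several of your variations are sound in themselves: proving $H_0(L,f^\star u)=0$ and $H_{\dim L}(L,f^\star u)=0$ algebraically (invertibility of $1-\gamma$ in $\Lambda_{f^\star u}$, plus Poincar\'e duality) where the paper quotes Latour and Levitt, and deducing full acyclicity of the Novikov homology rather than merely the survival of $H_1$ at $E_\infty$, as the paper does.

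There is, however, a genuine gap: you use $FH(L,u)$ and Theorem~\ref{thm:suitespecintro} without knowing that $f_\star:\pi_1(L)\to\pi_1(M)$ is surjective. In the paper, the Floer--Novikov complex, its invariance, and the spectral sequence are all constructed under the standing assumption that $f_\star$ is surjective (stated in Section~\ref{sec:fctaction} and Remark~\ref{rem:hypsurj}); surjectivity is what makes the covering $\bL$ of Lemma~\ref{lem:rev} connected, and none of the machinery is established without it. The paper therefore begins with a reduction: for $N_L\geq\dim(M)+1$ the index $[\pi_1(M):f_\star\pi_1(L)]$ is automatically finite by Proposition~\ref{prop:indsurj}, and by Remark~\ref{rem:indice} one replaces $M$ by the finite covering $M_1$ associated to $f_\star\pi_1(L)$ (which is closed and still fibers over $\cerc$), lifting $L$ there so that surjectivity holds. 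You never perform this reduction. In the case $N_L=\dim(M)$ you use the finite-index hypothesis only to get $f^\star u\neq 0$, and then run the spectral sequence on $\ctg M$ with a possibly non-surjective $f_\star$. In the case $N_L\geq\dim(M)+1$ you never invoke Proposition~\ref{prop:indsurj} at all, and your contradiction argument for $f^\star u\neq 0$ (via $H_\star(\widetilde{L})$ and the permanent cycle coming from $H_0$) itself applies the spectral sequence in exactly the regime where the paper has not defined it, so it cannot serve as a substitute. Once the covering reduction is made, surjectivity together with $u\neq 0$ gives $f^\star u\neq 0$ in one line and both of your detours become unnecessary; without it, your argument rests on machinery used outside its established domain of validity.
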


This theorem will be proved in section \ref{sec:FloerNov}.

\begin{acknowledgement} 
  I would like to thank Mich\`ele Audin and Mihai Damian for
  suggesting me to work on this extension of Floer homology, for
  valuable discussions around this subject and for their careful
  reading. I would also like to thank Jean-Claude Sikorav for his
  numerous remarks on the work which has given rise to this article. 
\end{acknowledgement}

\section{Novikov theory}
\label{sec:Nov}

Let us recall the definition of Novikov homology~(\cite{MR630459}, for 
a more detailed study, see \cite{TheseSiko}) and the results of
Damian~\cite{Mihai}.\\

Let $L$ be a closed manifold and $u \in H^{1}(L,\rr)$. 
Denote by $\Lambda$ the ring $\zz/2 [\pi_{1}(L)]$ and by
$\hLambda = \zz/2 [[\pi_{1}(L)]]$ the group of formal series.\\
Let $\Lambda_{u}$ be the completed ring of series 
$$\Lambda_u= \left \{\sum n_{i} g_{i} \in \hLambda \left | \; 
  g_{i} \in \pi_{1}(L), n_{i} \in \zz/2, u(g_i) \rightarrow + \infty \right. 
  \right \}$$
where $u(g_i) \rightarrow + \infty$ means here that for all $A > 0$, the 
set $$\{g_{i}\mid n_{i} \neq 0 \mbox{ and } u(g_{i}) <  A \}$$ 
is finite.

\begin{defn}
  Let $C(\tL)$ be the free $\Lambda$-complex spanned by fixed lifts 
  of the cells of a CW-decomposition of $L$ 
  to the universal cover $\tL$ of $L$  and 
  $$C(L,u) = \Lambda_{u} \otimes_{\Lambda} C(\tL).$$

  The homology of this complex  $C(L,u)$ is the Novikov homology
  $H(L,u)$.
\end{defn}

\begin{defn}[Morse-Novikov homology]
Let $\alpha$ be a closed generic one-form in the class of $u \in
H^{1}(L,\rr)$ and $\xi$ be the gradient of $\alpha$ 
with respect to some generic metric on $L$. 
For every critical point $c$ of $\alpha$, fix a lift $\tc $ of $c$ in
the universal cover $\tL$.

Let $C(\alpha,\xi)$ be the $\Lambda_{u}$-complex spanned by the zeros 
of $\alpha$ and whose differential is such that if $c$ and $d$ are
zeros of index difference equal to $1$ then their incidence number 
is the algebraic number of flow lines that joins $c$ to $d$ and lifts
to a path in $\tL$ from $g_{i} \tc$ to $\td$.
\end{defn}

By generic, we mean here that $\alpha$ has Morse-type singularities and 
$\xi$ satisfies the Morse-Smale condition. 

\begin{thm}[Novikov \cite{MR630459}, Latour~\cite{MR1320607}, 
see also Pazhitnov~\cite{MR1404410}]
For any generic pair $(\alpha,\xi)$, the homology of this complex is
isomorphic to $H(L,u)$.  
\end{thm}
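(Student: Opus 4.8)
The plan is to transport the statement to the universal cover $\tL$, where the closed form $\alpha$ becomes exact, and to recognise $C(\alpha,\xi)$ there as a completed equivariant Morse complex. First I would fix a primitive: as $\tL$ is simply connected the pullback of $\alpha$ is exact, $d\widetilde{f}$ for a smooth $\widetilde{f}\colon\tL\to\rr$, and, identifying $u$ with the homomorphism $\pi_1(L)\to\rr$ it induces, $\widetilde{f}$ satisfies the quasi-periodicity $\widetilde{f}(g\cdot x)=\widetilde{f}(x)+u(g)$ for every deck transformation $g\in\pi_1(L)$. The zeros of $\alpha$ are the projections of the critical points of $\widetilde{f}$, and lifting the generic metric makes $\widetilde{\xi}=\grad\widetilde{f}$ Morse--Smale on $\tL$. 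The decisive feature is that, as soon as $\pi_1(L)$ is infinite, $\tL$ is non-compact and $\widetilde{f}$ need not be proper: from a fixed lift $\tc$ the gradient may connect to the translates $g\,\td$ of a fixed lift $\td$ for infinitely many $g$, these windings being distinguished exactly by the growth $u(g)\to+\infty$.

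Making sense of the differential in this non-compact setting is the step I expect to be the main obstacle. For each fixed pair $(\tc, g\,\td)$ of index difference one, Morse--Smale transversality gives a compact zero-dimensional moduli space of connecting trajectories, so the count $n_g\in\zz/2$ is finite; the real point is that for every $A>0$ only finitely many $g$ with $u(g)<A$ contribute, which follows from the compactness of the space of downward trajectories along which the drop of $\widetilde{f}$ is bounded by $A$. Hence $\partial\tc=\sum_{\td}\big(\sum_g n_g\,g\big)\td$ is a well-defined map with coefficients in $\Lambda_u$ by the defining finiteness condition of that ring, and $\partial^2=0$ follows from the usual broken-trajectory analysis, the one-dimensional moduli spaces admitting compact compactifications once one restricts to a bounded drop. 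This exhibits $C(\alpha,\xi)$ as the completed equivariant Morse complex of $\widetilde{f}$.

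It remains to identify its homology with $H(L,u)$. I would first prove that $H(\alpha,\xi)$ does not depend on the generic pair $(\alpha,\xi)$ in the fixed class $u$, by building continuation chain morphisms and chain homotopies from a generic homotopy between two such pairs and checking, exactly as above, that these maps converge in $\Lambda_u$. Granting this invariance it suffices to compute one convenient representative. For an integral class one takes $\alpha$ adapted to a handle decomposition of $L$, the pullback by a self-indexing circle-valued Morse function $L\to\cerc$: then $\widetilde{f}$ is proper, the unstable manifolds of $\widetilde{\xi}$ give a $\pi_1(L)$-equivariant, locally finite cell decomposition of $\tL$, and matching cells with the chosen lifts of the critical points and incidences with flow-line counts mod $2$ identifies the underlying $\Lambda$-complex with $C(\tL)$, so that $C(\alpha,\xi)=\Lambda_u\otimes_\Lambda C(\tL)=C(L,u)$ and $H(\alpha,\xi)\cong H(L,u)$. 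A general real class $u$ is then reached from the integral case by the algebraic density arguments of Pazhitnov; alternatively, one may work over $\Lambda_u$ throughout and construct directly, following Latour, an equivariant chain equivalence between the Morse complex and a cellular or triangulated model of $\tL$, completing it to the desired isomorphism.
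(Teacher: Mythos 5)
First, a point of orientation: the paper does not prove this theorem at all --- it is quoted from the literature (Novikov, Latour, Pazhitnov) --- so your sketch has to be measured against those proofs rather than against anything in the text. Measured that way, your first two stages are the standard architecture and are fine as a sketch: finiteness of each count $n_g$ by Morse--Smale transversality, convergence of $\sum n_g\,g$ in the Novikov ring via compactness of bounded-drop trajectory spaces, $\partial^2=0$ by broken-trajectory analysis, and independence of the generic pair $(\alpha,\xi)$ within the class $u$ by continuation maps. (One small convention slip: counting lines from $\tc$ to $g\,\td$ with the normalization $\widetilde{f}(g\cdot x)=\widetilde{f}(x)+u(g)$ puts the coefficients in $\Lambda_{-u}$; the paper's definition counts lines from $g\,\tc$ to $\td$ precisely so as to land in $\Lambda_u$.)

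The genuine gap is in your computation for an integral class, which rests on two incorrect assertions. First, $\widetilde{f}$ is \emph{not} proper on the universal cover: its level sets are coverings of the fibre of the circle-valued function with deck group $\ker(u)\subset\pi_1(L)$, hence non-compact whenever that kernel is infinite (already for $L=\TT^2$); properness holds only on the integration cover $\bL=\tL/\ker(u)$. Second, and more seriously, the unstable manifolds of $\widetilde{\xi}$ do not in general decompose $\tL$: since $\widetilde{f}$ is unbounded above, a backward gradient trajectory can increase $\widetilde{f}$ forever without converging to any zero, so the union of the unstable manifolds is typically a proper subset of $\tL$ (and even on a closed manifold, upgrading unstable manifolds to a genuine CW decomposition is a delicate matter). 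The extreme case is $L=\cerc$ with $\alpha=d\theta$: there are no zeros and hence no cells at all, yet $\tL=\rr$; the theorem holds there only because $1+t$ becomes invertible in $\Lambda_u$, so that $\Lambda_u\otimes_\Lambda C(\tL)$ is acyclic --- a phenomenon a cell-matching argument cannot see. Consequently the identification $C(\alpha,\xi)\cong\Lambda_u\otimes_\Lambda C(\tL)$ cannot be read off cell-by-cell as you propose. The actual proofs do something different at exactly this step: cut the infinite cyclic cover along a regular level into compact cobordisms, identify the Morse complex of each cobordism with a relative cellular chain complex by ordinary compact Morse theory, and recover the Novikov complex as a completed limit (Novikov's approach), or use Latour's deformation argument, or Pazhitnov's cellular gradients together with rational approximation for irrational classes. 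Your closing sentence gestures at these, but as written the decisive step of your argument is unsupported.
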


\begin{rem}
\label{rem:revbar}
In these two descriptions of the Novikov homology, one could
replace $\tL$ by any integration covering $\bL$ of $L$ (i.e. such
that the pull-back of $u$ is zero).  We will use this in our
comparison between the Floer-Novikov homology and the
Novikov homology of $L$ in Section~\ref{sec:FloerNov}.\\
\end{rem}

Let us end this section by recalling results proved in \cite{Mihai}:

\begin{prop}[Damian {\cite{Mihai}}, Sikorav {\cite{TheseSiko}}]  
\label{prop:prelibre}
  Let $L$ be a closed manifold and $u~\in~H^{1}(L,\rr)$. 
  \begin{description}
  \item[a)] Let $\langle g_{1}, g_{2}, ..., g_{p} | r_{1}, r_{2}, ...,
    r_{q} \rangle$ be a presentation of the fundamental group
    $\pi_{1}(L)$ which satisfies $p-q \geq 2$. If $u \neq 0$, then 
    $H_{1}(L,u) \neq 0$.
  \item[b)] Suppose that $\pi_{1}(L) = G_{1} \ast G_{2}$ is a free
    product of two groups, none of them being trivial. If $u \neq 0$,
    then $H_{1}(L,u) \neq 0$.
  \end{description}
\end{prop}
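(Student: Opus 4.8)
The plan is to reduce both statements to a computation of the first Novikov homology of the fundamental group, i.e. of $H_1(\pi_1(L);\Lambda_u)$, and to exploit one elementary but crucial feature of $\Lambda_u$: whenever $g\in\pi_1(L)$ satisfies $u(g)\neq 0$, the element $g-1$ is invertible in $\Lambda_u$, since one of the geometric series $\sum_{k\ge 0} g^k$ or $\sum_{k\ge 0} g^{-k}$ converges in $\Lambda_u$ (its terms have $u$-values tending to $+\infty$). Because $H_1$ with any local system depends only on the fundamental group, I would replace $L$ by a presentation $2$-complex $K$ with $\pi_1(K)=\pi_1(L)=:G$, so that $H_1(L,u)=H_1(K,u)$ is the first homology of the free $\Lambda_u$-complex
$$\Lambda_u^{q}\xrightarrow{\ \partial_2\ }\Lambda_u^{p}\xrightarrow{\ \partial_1\ }\Lambda_u,$$
with $\partial_1(e_i)=g_i-1$ and $\partial_2$ given by the Fox derivatives of the relators. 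Since $u\neq 0$ forces $u(g_i)\neq 0$ for at least one generator (otherwise $u$ would vanish on the abelianisation), the same invertibility remark shows that $\partial_1$ is onto, so $H_0(L,u)=0$; this vanishing is used repeatedly.

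For part (i) I would argue by contradiction, assuming $H_1(L,u)=0$. Together with $H_0(L,u)=0$ this makes the complex above exact at $\Lambda_u^p$, and $\partial_1$ a split surjection onto the free module $\Lambda_u$, so $\Lambda_u^p\cong\Lambda_u\oplus N$ with $N=\ker\partial_1=\operatorname{im}\partial_2$; in particular $N$ is generated by the $q$ images of the basis of $\Lambda_u^q$. To convert the two facts "$N$ has corank one in $\Lambda_u^p$" and "$N$ is $q$-generated" into $p-1\le q$, I would push the situation into a field. Associated to $u$ there is a ring homomorphism $\varphi\colon\Lambda_u\to\mathbb{F}$, where $\mathbb{F}=\zz/2((\operatorname{im} u))$ is the Malcev--Neumann series field on the totally ordered group $\operatorname{im} u\subseteq\rr$, defined on group elements by $g\mapsto s^{u(g)}$; the Novikov support condition guarantees that images have well-ordered support, so $\varphi$ is well defined, and $\mathbb{F}$ is genuinely a field. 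Applying the right-exact functor $\mathbb{F}\otimes_{\Lambda_u}-$ to $\Lambda_u^p\cong\Lambda_u\oplus N$ gives $\dim_{\mathbb{F}}(\mathbb{F}\otimes N)=p-1$, while $q$-generation gives $\dim_{\mathbb{F}}(\mathbb{F}\otimes N)\le q$. Hence $p-1\le q$, contradicting $p-q\ge 2$, so $H_1(L,u)\neq 0$.

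For part (ii) I would instead use the Mayer--Vietoris sequence in group homology attached to the decomposition $G=G_1\ast G_2$ (equivalently to the action on the Bass--Serre tree, whose edge stabiliser is trivial). With coefficients in the $G$-module $\Lambda_u$ it reads, in low degrees,
$$H_1(G_1;\Lambda_u)\oplus H_1(G_2;\Lambda_u)\to H_1(G;\Lambda_u)\to \Lambda_u\xrightarrow{\ \beta\ }(\Lambda_u)_{G_1}\oplus(\Lambda_u)_{G_2}\to (\Lambda_u)_{G}\to 0,$$
where $(\Lambda_u)_{G_i}=\Lambda_u/I_i$ is the module of coinvariants and $\beta$ is induced by the two projections. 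Since $H_1(L,u)=H_1(G;\Lambda_u)$, I assume it vanishes: then exactness forces $\beta$ to be injective. Now $u\neq 0$ on $G_1\ast G_2$ means $u$ is nonzero on one factor, say $G_1$; by the invertibility remark the augmentation ideal $I_1$ equals all of $\Lambda_u$, so $(\Lambda_u)_{G_1}=0$ and $\beta$ reduces to the projection $\Lambda_u\to\Lambda_u/I_2=(\Lambda_u)_{G_2}$. This cannot be injective, because $G_2$ is non-trivial: any $1\neq h\in G_2$ yields $0\neq h-1\in I_2$. This contradiction gives $H_1(L,u)\neq 0$.

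The main obstacle is the ring-theoretic input behind part (i), namely that the corank/generation count is legitimate over $\Lambda_u$. One cannot simply embed $\Lambda_u$ into a skew field: as soon as $\pi_1(L)$ has torsion, $\Lambda_u$ has zero divisors (torsion elements lie in $\ker u$). The device above circumvents this by asking only for a homomorphism to a field together with the right-exactness of $\otimes$, which is enough to compare dimensions; constructing $\mathbb{F}$ and checking that $\varphi$ is well defined (the support and convergence bookkeeping) is precisely the technical point, and is where Sikorav's analysis of the Novikov ring enters. Everything else---the reduction to $\pi_1$, the vanishing of $H_0$, and the Mayer--Vietoris computation of part (ii)---is then formal.
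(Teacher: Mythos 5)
You should first be aware that the paper does not actually prove Proposition~\ref{prop:prelibre}: it is recalled with citations to Damian and Sikorav, so there is no in-paper proof to compare yours against, and I assess your argument on its own merits. It is correct. The reduction of $H_1(L,u)$ to $H_1(\pi_1(L);\Lambda_u)$ is legitimate: since the universal cover $\tL$ has vanishing $H_1$, the cellular complex of $\tL$ (or of a presentation $2$-complex) is the beginning of a free resolution of $\zz/2$ over $\Lambda$, so the Novikov homology in degrees $0$ and $1$ is $\operatorname{Tor}_i^{\Lambda}(\Lambda_u,\zz/2)$ and depends only on $(\pi_1(L),u)$. Your two mechanisms are then sound. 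For a), invertibility of $g-1$ when $u(g)\neq 0$ gives $H_0(L,u)=0$, and assuming $H_1(L,u)=0$ yields the split exact situation $\Lambda_u^p\cong\Lambda_u\oplus N$ with $N$ generated by $q$ elements; the homomorphism $\Lambda_u\to\mathbb{F}$ into the Hahn/Malcev--Neumann field of $\operatorname{Im}(u)$ is indeed well defined (the Novikov finiteness condition forces well-ordered support after pushing forward by $u$, and products behave as required), and right-exactness of $\mathbb{F}\otimes_{\Lambda_u}-$ gives $p-1\le q$, contradicting $p-q\ge 2$. Your observation that one needs only a ring homomorphism to a field, not an embedding --- impossible in the presence of torsion in $\pi_1(L)$ --- is exactly the right technical point; alternatively one can map to the Novikov completion of $\zz/2[\operatorname{Im}(u)]$, which is a field for the same geometric-series reason. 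For b), the Mayer--Vietoris sequence of $G_1\ast G_2$ with coefficients in $\Lambda_u$ (Bass--Serre tree with trivial edge stabilizers plus Shapiro's lemma) is valid, the $G_1$-coinvariants vanish because some $g-1$ with $g\in G_1$ is invertible, and $0\neq h-1\in I_2$ for $1 \neq h\in G_2$ kills injectivity of $\beta$. In substance this is the standard route of the cited sources (vanishing of $H_0$, a rank count over a Novikov field, and the free-product decomposition), so your proposal is a faithful and complete reconstruction rather than a genuinely different argument.
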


\section{The Floer type complex}
\label{sec:cpxFloer}

Let $i: L \hookrightarrow \ctg M$ be a Lagrangian embedding of a 
closed manifold $L$. Assume that the image of $L$ in $\ctg M$ is 
monotone on the loops. 
Let $(\varphi_{t})$ be a symplectic isotopy of $\ctg M$ and 
denote by $L_{t}$ the image of $L$ by $(\varphi_{t})$.

\begin{rem}
  As they are the images of $L$ by symplectomorphisms, the Lagrangian
  submanifolds $L_{t}$  are not necessarily monotone on the
  loops but they are monotone on the disks. We can therefore apply to 
  them all the results concerning monotone (in the usual 
  sense) Lagrangian submanifolds.
\end{rem}

Let $u \in H^1(M;\rr)$ denote the flux (or Calabi invariant) of
$(\varphi_{t})$, that is the class:
$$\Cal(\varphi_t)=[\varphi_1^\star\lambda_M-\lambda_M] \in H^1(\ctg M,\rr)
\simeq H^1(M,\rr).$$

The results proved in Sections \ref{sec:respre} to \ref{sec:Floerdiff} 
are used in Section \ref{sec:Floerdiff} to define a
$\Lambda_u$-complex $C(\bL_0,\bL_1)$ spanned by the intersection
points of $L_0=\varphi_0(L)$ and $L_1=\varphi_{1}(L)$. We prove in
Section~\ref{sec:invHam} that this complex depends only on $L$ and $u
= \Cal(\varphi_t)$. We then explain in Section~\ref{sec:surfstaru}
that these results can be used to define a free complex
over the Novikov ring $\Lambda_{f^\star u}$ spanned by $L \cap
\varphi_1(L)$ whose homology only depend on $L$ and $u$.

\subsection{Preliminary results}
\label{sec:respre}

Let us first notice that it is possible to restrict our symplectic 
isotopies to isotopies of the following type:

\begin{lem}
\label{lem:3.1}
 There exists a symplectic isotopy $(\psi_t)$ on $\ctg M$ such that 
$\psi_{1|L} = \varphi_{1|L}$ which is spanned by $\alpha + d H_t$,
where $\alpha$ is a closed one-form in $u$ and 
$H: \ctg M \times [0,1] \rightarrow \rr$ has compact support.
\end{lem}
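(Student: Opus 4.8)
The plan is to represent $(\varphi_t)$ by the time-dependent closed one-form $\sigma_t := \iota_{X_t}\omega_M$ dual (via $\omega_M$) to its generating vector field $X_t$, and to use that the flux is the integral of the associated cohomology path, so that $\Cal(\varphi_t) = \int_0^1 [\sigma_t]\,dt = u$. I would build the desired isotopy as a composition of the fibrewise translation directed by a fixed representative $\alpha$ of $u$ with a Hamiltonian isotopy, and only at the very end cut the Hamiltonian off to compact support, exploiting the fact that the lemma asks for agreement with $\varphi_1$ merely on the compact submanifold $L$, not on all of $\ctg M$.

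Concretely, I would fix a closed one-form $\alpha$ on $M$ (pulled back to $\ctg M$) with $[\alpha]=u$ and let $(\rho_t)$ be the symplectic isotopy generated by $\alpha$, i.e. the fibre translation $\rho_t(q,p)=(q,p+t\alpha_q)$, whose flux is $u$. Setting $\Phi:=\varphi_1\circ\rho_1^{-1}$, additivity of the flux along compositions gives $\Cal(\varphi_t\circ\rho_t^{-1})=u-u=0$. The key step is then the classical fact (flux homomorphism, Banyaga) that a symplectic isotopy of vanishing flux is homotopic rel endpoints to a Hamiltonian one: this produces a Hamiltonian isotopy $(\mu_t)$, generated by $dK_t$ for some (not yet compactly supported) functions $K_t$, with $\mu_1=\Phi$. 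Defining $\psi_t:=\mu_t\circ\rho_t$ one gets $\psi_1=\varphi_1$, and a direct computation of the generator of a composition yields $\iota_{Z_t}\omega_M = dK_t + (\mu_t^{-1})^\star\alpha$; since $\mu_t$ is isotopic to the identity, $(\mu_t^{-1})^\star\alpha-\alpha$ is exact, so $(\psi_t)$ is generated by a form of the type $\alpha+dH_t'$. I expect this reduction, and in particular the passage from zero flux to a Hamiltonian isotopy on the open manifold $\ctg M$, to be the main obstacle.

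Finally I would make the Hamiltonian part compactly supported without disturbing the restriction to $L$. The set $\bigcup_{t\in[0,1]}\psi_t(L)$ is compact, so I would choose a compactly supported cutoff $\chi$ equal to $1$ on a neighbourhood of it and put $H_t:=\chi H_t'$, replacing $(\psi_t)$ by the isotopy generated by $\alpha+dH_t$. On $\{\chi=1\}$ the two generating forms coincide, hence so do the two flows as long as trajectories remain there; since the original trajectories issuing from $L$ stay in the chosen compact set, a continuation argument shows that the modified flow keeps $L$ inside $\{\chi=1\}$ for all $t\in[0,1]$, whence $\psi_t|_L=\varphi_t|_L$ and in particular $\psi_{1|L}=\varphi_{1|L}$. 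This yields the required isotopy generated by $\alpha+dH_t$ with $\alpha$ closed in $u$ and $H$ of compact support.
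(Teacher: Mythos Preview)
Your argument is essentially correct but takes a different route from the paper's, and the difference is precisely at the step you flagged as delicate.

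The paper does not invoke the flux homomorphism/Banyaga machinery at all. Instead it exploits the Liouville form: for each $t$ it chooses a closed one-form $\alpha_t$ on $M$ in the class of $[\varphi_t^\star\lambda_M-\lambda_M]$, and composes $(\varphi_t)$ with the symplectic isotopy spanned by $-\alpha_t$. Because $\alpha_t$ kills the cohomology class of $\varphi_t^\star\lambda_M-\lambda_M$ at every time $t$ (not just after integrating over $[0,1]$), the generating form of the composite is exact for each $t$, so the composite is directly a Hamiltonian isotopy $(\chi_t)$ with compactly supported Hamiltonian. One then sets $(\psi_t)=(\chi_t)\circ(\text{isotopy spanned by }\alpha_1)$, which is generated by $\alpha_1+dH_t$ with $\alpha_1\in u$. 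This sidesteps entirely the issue of deducing ``Hamiltonian'' from ``zero total flux'' on the open manifold $\ctg M$.

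Your approach instead fixes a single $\alpha\in u$, composes with the constant translation $\rho_t$, obtains zero total flux, and then needs the fact that a zero-flux symplectic isotopy is homotopic rel endpoints to a Hamiltonian one. That fact is standard on closed manifolds but requires care on $\ctg M$; it can be made to work here (again using the Liouville form to control $\varphi_t^\star\lambda_M-\lambda_M$), but at that point you are essentially reproving what the paper does more directly with the time-dependent $\alpha_t$. Your cut-off argument at the end to make $H$ compactly supported is fine; note that in the paper's version the compact support comes out automatically and no cut-off is needed. (Minor slip: in your last sentence you wrote $\psi_t|_L=\varphi_t|_L$, but what your argument gives is that the cut-off isotopy agrees with $\psi_t$ on $L$, hence with $\varphi_1$ on $L$ at $t=1$.)
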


\begin{proof}
  As in the proof of \cite[Lemma 3.2]{Mihai} (which does not use the 
exactness assumption on $L$), consider a family of one-forms
$\alpha_t$ on $M$ in the class of $\varphi_t^\star \lambda_M -
\lambda_M$. Note that the composition of $(\varphi_{t})$ and of the
symplectic isotopy spanned by $-\alpha_t$ is a compactly
supported Hamiltonian isotopy $\chi_t$. The isotopy $(\psi_t)$ can then be defined
as the composition of $\chi_t$ with the symplectic isotopy spanned by
$\alpha_1$. 
\end{proof}

We will also require in the construction and applications that the
induced map $f_{\star} : \pi_{1}(L) \rightarrow \pi_{1}(M)$ is
surjective. It is enough to suppose that the index  
$[\pi_{1}(M): f_\star (\pi_{1}(L))]$ is finite:

\begin{rem}
\label{rem:indice}
If $f_{\star}: \pi_{1}(L) \rightarrow \pi_{1}(M)$ is not surjective,
let $M_1$ be the covering of $M$ induced by the subgroup $f_{\star}
(\pi_{1}(L))$ of $\pi_{1}(M)$. The manifold $L$ can be lifted as a
Lagrangian submanifold of $\ctg M_1$ which is monotone on the loops
(and which has the same Maslov class). 

If  $[\pi_{1}(M): f_\star (\pi_{1}(L))]$ is finite, then $M_1$ is 
closed and the conclusion of Theorem~\ref{thm:princ} in $\ctg M$ 
is then just a consequence of the same theorem for $\ctg M_1$ where
the surjectivity condition is satisfied.  
\end{rem}

Although the index $[\pi_{1}(M): f_\star (\pi_{1}(L))]$ is always
finite in the exact case (Lalonde and Sikorav, 
\cite[Theorem 1 a)]{MR1090163}), this assumption is not always
fulfilled in the monotone case. However, the index is necessarily
finite for monotone Lagrangian submanifolds if the Maslov number of
$L$ is large enough: 

\begin{prop}
\label{prop:indsurj}
  Let $L$ be a Lagrangian submanifold of $\ctg M$ which is monotone 
  (on the disks). If $N_L \geq \dim(M) + 1$, then the index of 
  $\pi_1(L)$ in $\pi_1(M)$ is finite.
\end{prop}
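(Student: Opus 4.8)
The plan is to argue by contradiction: assuming that the index $[\pi_1(M):f_\star(\pi_1(L))]$ is infinite, I would produce a non-constant $J$-holomorphic disk with boundary on a lift of $L$ whose Maslov index is at most $\dim M$, which contradicts the monotonicity hypothesis $N_L \geq \dim M + 1$. First I would pass to the covering $M_1 \to M$ associated with the subgroup $f_\star(\pi_1(L)) \subseteq \pi_1(M)$, exactly as in Remark~\ref{rem:indice}. Since the index is infinite, $M_1$ is a connected \emph{non-compact} manifold, and $L$ lifts to a closed Lagrangian submanifold $\hat L \subset \ctg M_1$. Because $D^2$ is simply connected, every relative class in $\pi_2(\ctg M, L)$ lifts uniquely to $\pi_2(\ctg M_1, \hat L)$, and this correspondence preserves both the symplectic area and the Maslov index; hence $\hat L$ is again monotone on the disks, with the \emph{same} Maslov number $N_{\hat L} = N_L$.

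The next step is to displace $\hat L$, and here I would exploit that $M_1$ is open: any compact subset of the cotangent bundle of an open manifold is displaceable by a compactly supported Hamiltonian isotopy. Concretely, a connected non-compact manifold carries a complete nowhere-vanishing vector field $V$ whose flow can be arranged to carry the compact projection $\pi(\hat L)$ off itself in finite time; lifting this flow to $\ctg M_1$ (equivalently, cutting off the Hamiltonian $\langle p, V\rangle$ to compact support) yields a Hamiltonian isotopy $\Phi_t$ with $\Phi_1(\hat L) \cap \hat L = \emptyset$. I would need to check that $\ctg M_1$ is geometrically bounded for a suitable complete metric on $M_1$, so that Gromov compactness applies.

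Finally I would invoke a Gromov-type existence result for holomorphic disks, in the spirit of those used by Damian~\cite{Mihai} and Polterovich~\cite{MR1109663}: a displaceable closed monotone Lagrangian in a geometrically bounded symplectic manifold bounds a non-constant $J$-holomorphic disk of controlled Maslov index. Monotonicity then forces $N_L = N_{\hat L}$ to divide this index $\mu$ and to satisfy $N_L \leq \mu$, and I would conclude $N_L \leq \dim M$, contradicting $N_L \geq \dim M + 1$.

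The step I expect to be the main obstacle is obtaining the \emph{sharp} bound $\mu \leq \dim M$ rather than the generic displacement estimate $\mu \leq \dim \hat L + 1 = \dim M + 1$, since the latter is just too weak to exclude the borderline value $N_L = \dim M + 1$. This is precisely where the exactness of $\ctg M_1$ should enter: because $\omega_{M_1} = d\lambda_{M_1}$ vanishes on $\pi_2(\ctg M_1)$, there are no non-constant $J$-holomorphic spheres, which removes the sphere-bubbling contribution from the compactness argument and shaves off the extra unit in the dimension count, so that the bubbled disk produced in the displacement argument can be taken with $1 \leq \mu \leq \dim M$. Carrying out the transversality and Gromov-compactness arguments on the open manifold $\ctg M_1$, and verifying that the resulting disk genuinely has positive Maslov index so that the monotonicity inequality is non-vacuous, is the delicate part that I would treat with care.
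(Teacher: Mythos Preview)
Your overall strategy---pass to the covering $M_1$ associated with $f_\star(\pi_1(L))$, observe that if the index is infinite then $M_1$ is open so the lift $\hat L$ is Hamiltonian-displaceable in $\ctg M_1$, and derive a contradiction from monotonicity---is exactly the route the paper takes. The case $N_L\geq\dim M+2$ goes through essentially as you say; indeed, the paper phrases it via Oh's theorem ($HF(L,L)\cong H^\star(L;\zz/2)$ when $N_L\geq n+2$), which is equivalent to the disk-bound argument you outline.

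The gap is in your treatment of the borderline case $N_L=\dim M+1$. Your proposed mechanism for sharpening the disk bound from $\mu\leq n+1$ to $\mu\leq n$---namely, that exactness of $\ctg M_1$ eliminates sphere bubbling and ``shaves off the extra unit''---does not work. The bound $\mu\leq n+1$ arises from the range of Morse indices on $L$ (a Floer trajectory carrying a single disk of Maslov index $N_L$ can connect a critical point of index $n$ to one of index $0$, forcing $N_L\leq n+1$); sphere bubbling plays no role in this count, and its absence does not improve the bound. In particular, a displaceable monotone Lagrangian in an exact symplectic manifold may perfectly well have $N_L=n+1$ and bound only disks of Maslov index exactly $n+1$.

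The paper resolves this case by a different mechanism. Oh's Theorem~II(ii) says that when $N_L=n+1$, the Floer homology $HF(L,L)$ is either $\bigoplus_{i=0}^{n}H^i(L;\zz/2)$ or $\bigoplus_{i=1}^{n-1}H^i(L;\zz/2)$; displaceability forces this to vanish, which is possible only if $L$ is a $\zz/2$-homology sphere. But then $H^1(L;\rr)=0$, so $\lambda_M|_L$ is exact, i.e.\ $L$ is an \emph{exact} Lagrangian, and the Lalonde--Sikorav theorem applies directly to give finite index. You should replace your sphere-bubbling argument with this dichotomy.
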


\begin{proof}
If $\pi_{1}(L) \rightarrow \pi_{1}(M)$ is not surjective, consider
again the covering $M_1$ of $M$ induced by the subgroup 
$f_{\star} (\pi_{1}(L))$ of $\pi_{1}(M)$ and the lift of $L$ into
$\ctg M_{1}$.

If the covering group of $M_{1} \rightarrow M$ is infinite, then
$M_{1}$ is open and the Lagrangian submanifold $L$ can be displaced 
from itself by a Hamiltonian isotopy (see \cite[Proposition 1]{MR1090163}) 
so that the usual Floer homology $HF(L,L)$ (with $\zz/2$ coefficients)
must be trivial. But if the Maslov number of $L$ is greater than 
$\dim (M) + 2$, we know by Oh's theorem (\cite[Theorem II (i)]{MR1389956}) 
that this homology is isomorphic to the usual cohomology 
$H^{\star}(L,\zz/2)$ of $L$ and this is in contradiction with the 
vanishing of $HF(L,L)$.

If $N_{L}= \dim(M)+1$, then by \cite[Theorem II (ii)]{MR1389956}, 
$$ HF(L,L)  \approx \bigoplus_{i=0}^{\dim(M)} H^{i}(L,\zz/2) \mbox{ or
  }\bigoplus_{i=1}^{\dim(M)-1} H^{i}(L,\zz/2)$$ and this also leads to
a contradiction unless $L$ is a $\zz/2$-homology sphere.

However, if $L$ is a $\zz/2$-homology sphere, then $H^1(L,\rr)=0$ and
in particular $L$ is exact so that we can directly apply the result of
Lalonde and Sikorav to see that the index is finite.
\end{proof}

\begin{rem}
  We cannot expect to remove the assumption $N_L \geq \dim(M)+1$
  in Proposition~\ref{prop:indsurj}. Indeed,   Polterovich proved in
  \cite{MR1109663} (see also Audin \cite{MR966952} for the
  construction) that for every two integer numbers $2 \leq k \leq n$,
  there exists a compact manifold $L_{n,k}$ which admits a monotone
  Lagrangian embedding in $\cc^{n}$ (and consequently ``local''
  monotone Lagrangian submanifolds in any cotangent bundle) with
  Maslov number equal to $k$ such that:
  \begin{description}
  \item[a)] $L_{n,n} = \sph^{n-1} \times \sph^{1} / \tau_{n-1} \times
    \tau_{1}$ where $\tau_{j}: \sph^{j} \rightarrow \sph^{j}$ is the
    antipodal involution;
  \item[b)] $L_{n,k} = L_{k,k} \times \sph^{n-k}$ where $k < n$.   
  \end{description}
\end{rem}

\begin{rem}
\label{rem:hypsurj}
  Thanks to Remark~\ref{rem:indice} and
  Proposition~\ref{prop:indsurj}, it is enough to prove
  Theorem~\ref{thm:princ} when the induced map 
  $$f_{\star}: \pi_{1}(L) \longrightarrow \pi_{1}(M)$$ 
  is surjective. From now on, we will always suppose that $f_{\star}$
  is surjective.
\end{rem}

Thanks to the surjectivity assumption on $f_\star$, we can consider a
connected covering of the Lagrangian submanifold $L$ in the cotangent
bundle of the universal cover of $M$:

\begin{lem}
\label{lem:rev}
  Let $\tM \rightarrow M$ be the universal covering of $M$ and 
$\tp: \ctg \tM \rightarrow \ctg M$ be the induced covering on the
cotangent bundles. Denote by $y \mapsto y^g$ the right action of
$\pi_1(M)$ on $\ctg \tM$. 

Let $\bL \rightarrow L$ be the pull-back of the covering  
 $\ctg \tM \rightarrow \ctg M$ by the embedding 
$i: L \rightarrow \ctg M$. Then:
\begin{description}
\item[(i)] $\bL$ is path-connected if and only if the map  
  $f_{\star} : \pi_1(L) \rightarrow \pi_1(M)$ is surjective.
\item[(ii)] The covering $\bL \rightarrow L$ corresponds to the
  covering of $L$ associated with the subgroup $K = \ker (f_{\star})$
  of $\pi_1(L)$. 
\item[(iii)] The map $\ti: \bL \rightarrow \ctg \tM$ is a monotone 
  Lagrangian embedding and for all $g \in \pi_1(M)$, $\tx \in \bL$, 
  $$\ti ( \tx^g) = \left({\ti(\tx)}\right)^g.$$
\end{description}
\end{lem}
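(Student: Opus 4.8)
The plan is to analyze the fiber-product construction underlying $\bL$ and then verify each claim in turn. By definition, $\bL = \{(x,y) \in L \times \ctg\tM \mid i(x) = \tp(y)\}$, with the projection $\bL \to L$ sending $(x,y) \mapsto x$. First I would observe that since $\tp : \ctg\tM \to \ctg M$ is the covering associated with the subgroup $\{1\} \subset \pi_1(\ctg M) \cong \pi_1(M)$ (its total space is simply connected up to the contractible fibers, so $\pi_1(\ctg\tM) = \pi_1(\tM) = 1$), the pull-back $\bL \to L$ is the covering associated with the preimage subgroup $(i_\star)^{-1}(\{1\}) = \ker(i_\star : \pi_1(L) \to \pi_1(\ctg M))$. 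Since $\pi : \ctg M \to M$ is a homotopy equivalence and $f = \pi \circ i$, we have $i_\star = f_\star$ up to this identification, so this kernel is exactly $K = \ker(f_\star)$. This simultaneously establishes \textbf{(ii)}.

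For \textbf{(i)}, I would use the standard fact that the total space of the covering associated with a subgroup $K \leq \pi_1(L)$ is connected precisely when $K$ is the full preimage of the trivial subgroup under a surjection, or more directly: the connected components of $\bL$ are in bijection with the double cosets $f_\star(\pi_1(L)) \backslash \pi_1(M) / \{1\}$, i.e. with the right cosets of the image $f_\star(\pi_1(L))$ in $\pi_1(M)$. Hence $\bL$ is path-connected if and only if $f_\star(\pi_1(L)) = \pi_1(M)$, which is the surjectivity of $f_\star$. The clean way to see the coset description is to note that $\pi_0(\bL)$ is acted on transitively by the deck group $\pi_1(M)$ (via the right action $y \mapsto y^g$ lifted to $\bL$), and the stabilizer of the base component is exactly the image $f_\star(\pi_1(L))$.

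The remaining claim \textbf{(iii)} is where I expect the only real content, though it is still largely formal. The map $\ti : \bL \to \ctg\tM$ is the second-coordinate projection $(x,y) \mapsto y$; it is an embedding because it is a lift of the embedding $i$ through the covering $\tp$, and the equivariance $\ti(\tx^g) = (\ti(\tx))^g$ is immediate from the definition of the lifted right action on the fiber product, since the action on $\bL$ is defined precisely so as to intertwine with the deck action on $\ctg\tM$. The genuinely geometric point is that $\ti$ is a \emph{monotone} Lagrangian embedding: I would verify that $\ti^\star \omega_{\tM} = (\tp \circ \ti)^\star$-pullback matches $i^\star \omega_M = 0$, so $\ti(\bL)$ is Lagrangian, and then check monotonicity on the loops. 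Here the key observation is that $\ti^\star \lambda_{\tM} = $ the pullback of $\lambda_M$ (since $\tp^\star \lambda_{\tM} = \lambda_M$, the Liouville form being canonical and deck-invariant), and likewise the Maslov class pulls back, so the monotonicity relation \eqref{eq:MonL} for $L$ with constant $k_L$ transfers verbatim to $\bL$ with the same constant. The main obstacle, such as it is, lies in being careful that the covering map $\tp$ preserves both $\lambda$ and the Maslov data simultaneously, so that neither side of the monotonicity equation is distorted; once that naturality is in hand, monotonicity on $\bL$ follows because evaluating both integrals on a loop $\tgamma$ in $\bL$ reduces to evaluating them on the loop $f_\star[\tgamma]$ downstairs, where they already agree.
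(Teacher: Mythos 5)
Your proof is correct and takes essentially the same route as the paper's appendix: parts (i) and (ii) are the standard covering-space arguments (you invoke the classification of pull-back coverings and the deck-group action on $\pi_0$ where the paper writes out the path-lifting by hand), and part (iii) rests on the same observation that the Liouville form and the Maslov data on $\ctg \tM$ are pulled back from $\ctg M$. Only fix the typographical slip in the pullback identity: it should read $\tp^{\star}\lambda_M = \lambda_{\tM}$, not $\tp^{\star}\lambda_{\tM} = \lambda_M$, since pullback along $\tp$ carries forms on $\ctg M$ to forms on $\ctg \tM$.
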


The proof of this lemma uses only basic algebraic topology and can be
found in the appendix.\\ 

\begin{lem}
 Let $(\varphi_t)$ be the symplectic isotopy generated by 
$\alpha + d H_t$. Then this isotopy lifts to a Hamiltonian isotopy
$(\tvarphi_t)$ on $\ctg \tM$.

Moreover, if $L_t=\varphi_t(L)$ and $\bL_t=\tvarphi_t(\bL)$ , then
$$\bL \cap \bL_1 = \bigcup_{x \in L \cap L_1} \tp^{\: -1}(x).$$
\end{lem}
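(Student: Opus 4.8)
The plan is to lift the generating vector field along the covering $\tp$ and to exploit the fact that the flux, which is the only obstruction to being Hamiltonian, dies on the simply connected cover $\tM$; once the lift is shown to intertwine with $\tp$, the intersection identity is purely formal.

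First I would lift the isotopy. Let $\pi : \ctg M \to M$ and $\pi' : \ctg \tM \to \tM$ be the bundle projections, $q : \tM \to M$ the universal covering, so that $q \circ \pi' = \pi \circ \tp$, and write $X_t$ for the time-dependent vector field generating $(\varphi_t)$, characterized by $\iota_{X_t}\omega_M = \pi^\star \alpha + dH_t$. Since $\tp$ is a covering map it is a local diffeomorphism, so $X_t$ admits a unique lift $\tilde X_t$ with $d\tp \circ \tilde X_t = X_t \circ \tp$; and since $\tp$ is a local symplectomorphism ($\tp^\star \omega_M = \omega_{\tM}$), this lift is symplectic. The flow $(\varphi_t)$ exists on $[0,1]$, so by the unique path-lifting property of coverings the flow $(\tvarphi_t)$ of $\tilde X_t$ also exists on $[0,1]$, with $\tvarphi_0 = \mathrm{id}$ and $\tp \circ \tvarphi_t = \varphi_t \circ \tp$; thus $(\tvarphi_t)$ is a lift of $(\varphi_t)$. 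As the deck transformations $y \mapsto y^g$ commute with $\tp$, uniqueness of the lift forces $\tilde X_t$ to be invariant under them, whence $\tvarphi_t(y^g) = (\tvarphi_t(y))^g$.

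The heart of the matter is that $(\tvarphi_t)$ is Hamiltonian. Conceptually this is automatic: $\ctg \tM$ retracts onto its zero section $\tM$, which is simply connected, so $H^1(\ctg \tM;\rr) \cong H^1(\tM;\rr) = 0$ and every symplectic isotopy of $\ctg \tM$ starting at the identity is Hamiltonian. Concretely, $\iota_{\tilde X_t}\omega_{\tM} = \tp^\star(\pi^\star\alpha + dH_t)$; since $\alpha$ is closed and $\tM$ is simply connected, $q^\star\alpha = df$ for some $f : \tM \to \rr$, and using $q \circ \pi' = \pi \circ \tp$ one gets $\tp^\star\pi^\star\alpha = (\pi')^\star q^\star\alpha = d(f \circ \pi')$. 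Hence $\iota_{\tilde X_t}\omega_{\tM} = d(f \circ \pi' + H_t \circ \tp)$ is exact, so $(\tvarphi_t)$ is generated by $\tilde H_t = f \circ \pi' + H_t \circ \tp$. (Under a deck transformation $g$ the primitive $f$ changes by the period constant $u(g)$ of $\alpha$, which does not affect $\tilde X_t$, in agreement with the equivariance above and with the fact that $(\tvarphi_t)$ does not descend.)

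It then remains to identify the intersection. By the definition of $\bL$ as the pull-back of $\tp$ by $i$ (Lemma~\ref{lem:rev}), its image satisfies $\ti(\bL) = \tp^{-1}(L)$, which is connected precisely because $f_\star$ is surjective. Applying $\tp \circ \tvarphi_1 = \varphi_1 \circ \tp$ gives $\bL_1 = \tvarphi_1(\tp^{-1}(L)) = \tp^{-1}(\varphi_1(L)) = \tp^{-1}(L_1)$, and therefore
$$\bL \cap \bL_1 = \tp^{-1}(L) \cap \tp^{-1}(L_1) = \tp^{-1}(L \cap L_1) = \bigcup_{x \in L \cap L_1} \tp^{-1}(x).$$
The main obstacle is the Hamiltonian claim: all the geometric content lies in the vanishing of $H^1(\ctg \tM;\rr)$ (equivalently the exactness of $q^\star\alpha$), together with the standard but necessary check that the lifted flow is complete on $[0,1]$. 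Once $\tp \circ \tvarphi_t = \varphi_t \circ \tp$ is in hand, the intersection identity follows formally.
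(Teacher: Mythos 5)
Your proof is correct and takes essentially the same route as the paper, which simply defines $(\tvarphi_t)$ as the isotopy spanned by the pullback of $\alpha + dH_t$ to $\ctg \tM$ (citing Damian's Lemma 3.6 for the details you spell out: exactness of the pulled-back form on the simply connected cover, hence the lift is Hamiltonian, and the formal intersection identity via $\tp \circ \tvarphi_t = \varphi_t \circ \tp$).
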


\begin{proof}
  As in the proof of \cite[Lemma 3.6]{Mihai}, which does not involve
  any assumption on the exactness/monotonicity of $L$, we can define
  the isotopy $(\tvarphi_t)$ as the isotopy spanned by the pullback
  of $\alpha + d H_t$ to $\ctg \tM$.
\end{proof}

\subsection{The action one-form}
\label{sec:fctaction}

In this section, $L$ is assumed to be a closed Lagrangian submanifold
of $\ctg M$. It is also assumed that $L$ is monotone on the loops and
that $f_{\star}: \pi_{1}(L) \rightarrow \pi_{1}(M)$ is surjective.

Let $(\varphi_t)$ be a symplectic isotopy as in Lemma~\ref{lem:3.1}. 
Denote by $u \in H^1(M;\rr)$ its flux.

If $L_t=\varphi_t(L)$, let $\Omega(L_0,L_1)$ be the space of paths 
from $L_0$ to $L_1$:
$$\Omega(L_0,L_1) = \{z \in C^\infty([0,1];\ctg M) \; | \; z(i) \in L_i, i=0,1\}.$$

We define a one-form on $\Omega(L_0,L_1)$ by:
$$\nu_z(V)=\int_0^1 \omega_M(z'(t),V(t)) \; dt.$$
The zeros of $\nu$ are the constant maps, that is, the intersection
points of $L_0$ and~$L_1$.

The integral of $\nu$ on a loop involves the one-form $u$, as in the
exact case, but also the monotonicity constant $k_L$ of $L$:

\begin{prop}
\label{prop:fctlacet}
  Let $\gamma: \cerc \rightarrow \Omega(L_0,L_1)$ be a loop. Then
\begin{eqnarray}
\int_\gamma \nu 
& = & \lambda_M (\gamma_0) - \lambda_M \left({\varphi_1^{-1}(\gamma_1)}\right) 
       - u ( \gamma_0 ) \label{eq:1}\\
& = & k_L \left({ \mu_{L_0} (\gamma_0) - \mu_{L_1} (\gamma_1) }\right) 
       - u ( \gamma_0 ) \label{eq:2}
\end{eqnarray}
denoting $\gamma_i= \gamma(\cerc \times \{i\})$ for $i=0,1$.
\end{prop}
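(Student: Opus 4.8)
The plan is to interpret a loop $\gamma\colon\cerc\to\Omega(L_0,L_1)$ as a cylinder and to reduce both equalities to Stokes' theorem together with the monotonicity hypothesis. Writing $\gamma(s)(t)$ for $s\in\cerc$ and $t\in[0,1]$ defines a smooth map $\Gamma\colon\cerc\times[0,1]\to\ctg M$ whose boundary circles are precisely $\gamma_0=\Gamma(\cerc\times\{0\})\subset L_0$ and $\gamma_1=\Gamma(\cerc\times\{1\})\subset L_1$. Unwinding the definition of $\nu$, the tangent vector to the loop $\gamma$ at $\gamma(s)$ is $\partial_s\Gamma$, so that $\int_\gamma\nu=\int_{\cerc}\int_0^1\omega_M(\partial_t\Gamma,\partial_s\Gamma)\,dt\,ds=-\int_{\cerc\times[0,1]}\Gamma^\star\omega_M$ (the sign being fixed by the orientation conventions, a routine check).

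First I would establish \eqref{eq:1}. Since $\omega_M=d\lambda_M$, Stokes' theorem gives $\int_{\cerc\times[0,1]}\Gamma^\star\omega_M=\int_{\gamma_1}\lambda_M-\int_{\gamma_0}\lambda_M$, whence $\int_\gamma\nu=\lambda_M(\gamma_0)-\lambda_M(\gamma_1)$. It then remains to rewrite $\lambda_M(\gamma_1)$, where $\gamma_1$ is a loop in $L_1=\varphi_1(L)$, as a quantity living on $L_0=L$. Pulling back by $\varphi_1$ yields $\lambda_M(\gamma_1)=\int_{\varphi_1^{-1}(\gamma_1)}\varphi_1^\star\lambda_M=\lambda_M\bigl(\varphi_1^{-1}(\gamma_1)\bigr)+\int_{\varphi_1^{-1}(\gamma_1)}(\varphi_1^\star\lambda_M-\lambda_M)$, and the last integral equals $u\bigl(\varphi_1^{-1}(\gamma_1)\bigr)$ by definition of the flux $u=\Cal(\varphi_t)$. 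The point I would have to verify is that $u(\varphi_1^{-1}(\gamma_1))=u(\gamma_0)$: this holds because $\varphi_1$ is isotopic to the identity through $(\varphi_t)$ and hence acts trivially on $H_1(\ctg M)$, so $[\varphi_1^{-1}(\gamma_1)]=[\gamma_1]$, while $\gamma_0$ and $\gamma_1$ are homologous in $\ctg M$ as they cobound the cylinder $\Gamma$. Substituting gives exactly \eqref{eq:1}.

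Finally, \eqref{eq:2} follows by invoking the monotonicity on the loops (Definition~\ref{defn:mon}). The loop $\gamma_0$ lies in $L_0=L$, so $\lambda_M(\gamma_0)=k_L\,\mu_{L_0}(\gamma_0)$ directly; and $\varphi_1^{-1}(\gamma_1)$ also lies in $L$, so $\lambda_M(\varphi_1^{-1}(\gamma_1))=k_L\,\mu_{L}(\varphi_1^{-1}(\gamma_1))$. Substituting both into \eqref{eq:1} and comparing with \eqref{eq:2}, everything reduces to the identity $\mu_L(\varphi_1^{-1}(\gamma_1))=\mu_{L_1}(\gamma_1)$. I expect this last point to be the main obstacle: since $L_1$ is only monotone on the disks, not on the loops, one cannot apply Definition~\ref{defn:mon} to $\gamma_1$ directly, and one must instead know that the symplectomorphism $\varphi_1\colon L\to L_1$ transports the Maslov class, i.e.\ $\varphi_1^\star\mu_{L_1}=\mu_{L}$. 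The subtlety is that $\varphi_1$ does not preserve the vertical polarization used to pin down $\mu_L$ in $\ctg M$; one resolves this by using that $(\varphi_t)$ is a symplectic isotopy starting at the identity, so the induced automorphism of the squared canonical bundle is homotopic to the identity and the Maslov indices of $\varphi_1^{-1}(\gamma_1)$ and $\gamma_1$ coincide. This yields \eqref{eq:2} and completes the proof.
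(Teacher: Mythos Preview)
Your proposal is correct and follows essentially the same route as the paper: interpret the loop as a cylinder, compute $\int_\gamma\nu=-\int\Gamma^\star\omega_M$, apply Stokes, pull $\lambda_M(\gamma_1)$ back through $\varphi_1$ to extract the flux term, and then invoke monotonicity on the loops together with the invariance of the Maslov class under the symplectic isotopy. The paper handles both the equality $u(\varphi_1^{-1}(\gamma_1))=u(\gamma_0)$ and the transport $\mu_L(\varphi_1^{-1}(\gamma_1))=\mu_{L_1}(\gamma_1)$ as one-line remarks, whereas you spell out the homological reasons; your extra care there is justified but not a different argument.
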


\begin{proof}
Considering the loop $\gamma$ as a map 
$\gamma : \cerc \times [0,1] \rightarrow \ctg M$, we get
$$\int_\gamma \nu = 
\int_\cerc \nu \left( \frac{\partial \gamma}{\partial s} \right) ds
=\int_\cerc \int_0^1 \omega_M \left(\frac{\partial \gamma}{\partial t},
\frac{\partial \gamma}{\partial s} \right)  \;dt \;ds
= - \int_{\gamma(\cerc \times [0,1])} \omega_M.$$
Then, by the Stokes formula,
\begin{eqnarray*}
  \int_\gamma \nu &=& \int_{\gamma(\cerc \times \{0\})} \lambda_M - 
 \int_{\gamma(\cerc \times \{1\})} \lambda_M \\
                  &=& \int_{\gamma(\cerc \times \{0\})} \lambda_M - 
\int_{\varphi_1^{-1}(\gamma(\cerc \times \{1\}))} \varphi_1^\star \lambda_M \\
                  &=& \int_{\gamma(\cerc \times \{0\})} \lambda_M -
 \int_{\varphi_1^{-1}(\gamma(\cerc \times \{1\}))} \lambda_M
- \int_{\varphi_1^{-1}(\gamma(\cerc \times \{1\}))} 
\left(\varphi_1^\star \lambda_M - \lambda_M \right).
\end{eqnarray*}
As $\varphi_1^\star \lambda_M - \lambda_M$ is a closed one-form in the 
cohomology class $u= \Cal(\varphi_t)$, the third term is equal to: 
$$u \left( \varphi_1^{-1}(\gamma(\cerc \times \{1\})) \right) 
= u \left( \gamma(\cerc \times \{0\}) \right).$$
So that
$$ \int_\gamma \nu = \lambda_M \left(\gamma_0 \right)
       - \lambda_M \left( \varphi_1^{-1}(\gamma_1) \right) 
       - u \left( \gamma_0 \right).$$
We now use the monotonicity of $L$ to write:
\begin{eqnarray*}
\int_\gamma \nu &=& k_L \left({ \mu_L \left(\gamma_0 \right)
       - \mu_L \left( \varphi_1^{-1}(\gamma_1) \right) 
     }\right) - u \left( \gamma_0 \right)\\
                &=& k_L \left({ \mu_{L_0} \left(\gamma_0 \right)
       - \mu_{L_1} \left(\gamma_1 \right) 
     }\right) - u \left( \gamma_0 \right)
\end{eqnarray*}
since $\varphi_1$ is a symplectic isotopy.
\end{proof}

\begin{cor}
The action one-form $\nu$ is closed.
\end{cor}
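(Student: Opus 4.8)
The plan is to deduce closedness directly from the computation of $\int_\gamma \nu$ just carried out in Proposition~\ref{prop:fctlacet}. Recall that a one-form on a manifold---here the path space $\Omega(L_0,L_1)$---is closed if and only if its integral over every null-homotopic loop vanishes. The ``only if'' direction is Stokes' formula; for the converse one evaluates $d\nu_z(V,W)$ on a pair of tangent vectors by integrating $\nu$ over the boundary of an arbitrarily small square spanned by $V$ and $W$ in a local chart, this boundary being null-homotopic, so that $\int_{\sigma_\epsilon} d\nu = \oint_{\partial\sigma_\epsilon}\nu = 0$ forces $d\nu$ to vanish pointwise. It therefore suffices to show that $\int_\gamma \nu = 0$ for every contractible loop $\gamma : \cerc \rightarrow \Omega(L_0,L_1)$.

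First I would invoke formula~(\ref{eq:2}), which expresses $\int_\gamma \nu$ entirely in terms of the three quantities $\mu_{L_0}(\gamma_0)$, $\mu_{L_1}(\gamma_1)$ and $u(\gamma_0)$, where $\gamma_i = \gamma(\cerc \times \{i\})$ is the loop traced by the $i$-th endpoint. The crucial observation is that each of these terms is the evaluation of a genuine degree-one cohomology class---the Maslov classes $\mu_{L_0} \in H^1(L_0)$ and $\mu_{L_1} \in H^1(L_1)$, and (the pullback of) $u$---on the homology class of the corresponding endpoint loop. Consequently the right-hand side of~(\ref{eq:2}) depends only on the classes $[\gamma_0] \in H_1(L_0)$ and $[\gamma_1] \in H_1(L_1)$.

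Next I would use the two continuous evaluation maps $\mathrm{ev}_i : \Omega(L_0,L_1) \rightarrow L_i$, $z \mapsto z(i)$. If $\gamma$ is null-homotopic in $\Omega(L_0,L_1)$, then $\gamma_i = \mathrm{ev}_i \circ \gamma$ is null-homotopic in $L_i$, whence $[\gamma_0] = 0$ in $H_1(L_0)$ and $[\gamma_1] = 0$ in $H_1(L_1)$. All three terms on the right-hand side of~(\ref{eq:2}) then vanish, giving $\int_\gamma \nu = 0$, which is exactly what the criterion requires.

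I do not expect a genuine obstacle: the entire analytic content has already been packaged into Proposition~\ref{prop:fctlacet}, and what remains is the standard fact that a one-form whose periods over all contractible loops vanish is closed. The only point deserving a little care is the infinite-dimensional setting of $\Omega(L_0,L_1)$, where one should confirm that the pointwise criterion for $d\nu = 0$ via small null-homotopic loops remains legitimate; this is routine once one works in a local chart modelled on the appropriate Banach (or Fr\'echet) space of paths.
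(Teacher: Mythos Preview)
Your argument is correct and follows essentially the same route as the paper's one-line proof, which simply observes that formula~(\ref{eq:1}) shows $\int_\gamma \nu$ depends only on the homotopy class of $\gamma$. The only cosmetic difference is that the paper invokes~(\ref{eq:1}) rather than~(\ref{eq:2}): since $\lambda_M$ restricted to a Lagrangian is closed, the terms in~(\ref{eq:1}) are already homotopy invariants without any appeal to monotonicity, whereas your use of~(\ref{eq:2}) routes the same conclusion through the Maslov class.
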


\begin{proof}
 The formula (\ref{eq:1}) proves that $\int_\gamma \nu$ depends only
 on the homotopy class of $\gamma$ in $\Omega (L_0,L_1)$.
\end{proof}

\begin{rem}
\label{rem:fonctionelle} We could also lift $\nu$ to 
$\Omega(\bL_0,\bL_1)$ (as in the exact case, see~\cite{Mihai}) but the 
one-form is not necessarily exact on this space.
Nevertheless, we will be able to carry out the construction of the
complex without needing a primitive of~$\nu$.
\end{rem}

\subsubsection*{Alternative setting}

We can also define a one-form on $\Omega(L,L)$. This setting will be
useful in the proof of Hamiltonian invariance
(Section~\ref{sec:invHam}).

Let $X^{\alpha+dH_t}_t$ be the symplectic dual of $\alpha + dH_t$,
defined by 
$\omega_M( \: \cdot \: , X^{\alpha+dH_t}_t) = (\alpha + dH_t)(\cdot)$. 
Denote by $(\varphi_t)$ the isotopy spanned by $X^{\alpha+dH_t}_t$.

We can define a one-form $\hnu$ on $\Omega (L,L)$ by:
$$\hnu_z(V)=\int_0^1 \omega_M(z'(t),V(t))+(\alpha+dH_t)(V(t)) \; dt.$$
The zeros of $\hnu$ are the flow trajectories beginning on $L$ (at
time $0$) and ending on $L$ (at time $1$).

If $\gamma: \cerc \rightarrow \Omega(L,L)$ is a loop in $\Omega(L,L)$,
we have as in Proposition~\ref{prop:fctlacet}:
$$ \int_\gamma \hnu = - \int_{\cerc \times [0,1]} \gamma^\star \omega_M 
+ \int_\cerc \int_0^1 (\alpha+dH_t) \left(\frac{\partial \gamma}{\partial s} 
\right)  \;dt \;ds $$
with
\begin{eqnarray*}
 \int_\cerc \int_0^1 (\alpha+dH_t) \left(\frac{\partial \gamma}{\partial s} 
\right)  \;dt \;ds 
&=&  \int_\cerc \int_0^1 \alpha \left(\frac{\partial \gamma}{\partial s} 
\right)  \;dt \;ds  
= \int_0^1 \int_{\gamma(\cdot,t)} \alpha  \; dt \\
&=& \int_{\gamma(\cdot,0)} \alpha 
= u \left( \gamma_0 \right)
\end{eqnarray*}
since $ \int_{\gamma(\cdot,t)} \alpha$ does not depend on $t$. Thus,
we have

\begin{eqnarray}
\int_\gamma \hnu =
&=& \lambda_M(\gamma_0) - \lambda_M(\gamma_1) + 
u \left( \gamma_0 \right) \label{eq:alt1}\\
&=& k_L \left( \mu_L(\gamma_0) - \mu_L(\gamma_1) \right) + 
u \left( \gamma_0 \right). \label{eq:alt2}
\end{eqnarray}

\begin{rem}
Note that these two settings are equivalent: if $\nu_-$ is the
one-form defined on $\Omega(L_0,\varphi^{-1}(L_0))$ with the
symplectic isotopy $(\varphi_t^{-1})$, then the map 
$\Gamma(z)= \varphi_t^{-1}(z)$ is a $1$ to $1$ correspondence between
$\Omega(L,L)$ and $\Omega(L_0,\varphi^{-1}(L_0))$ and we have 
$$ \Gamma^\star \nu_- = \hnu.$$
(The fact that the correspondence uses $\varphi_t^{-1}$ instead of
$\varphi_t$ explains the difference of signs between the relations
(\ref{eq:1}) and (\ref{eq:2}) on the one hand, and (\ref{eq:alt1}) and
(\ref{eq:alt2}) on the other hand.)
\end{rem}

\subsection{The gradient}
\label{sec:grad}

Let $(J_t)$ be a family of almost complex structures on $\ctg M$ that
are compatible with $\omega_M$ and $(g_t)$ be the family of associated 
Riemannian metrics on $\ctg M$.

We consider the trajectories of the opposite of the gradient of the
one-form~$\nu$ with respect to the induced metric on $\Omega(L_0,L_1)$.  
These are, as maps of two variables, solutions of the Cauchy-Riemann 
equation.

We define for a solution $v$ of the Cauchy-Riemann equation its energy 
$$E(v) = \int_{\rr \times [0,1]} \left \|\frac{\partial v}{\partial s}
 \right \|^2 \;dt \;ds.$$
Denote then by $\cM(L_0,L_1)$ the space of trajectories of finite
energy: 
$$\cM(L_0,L_1)= \left \{
v \in \mathcal{C}^\infty(\rr \times [0,1],\ctg M) \left | 
  \begin{array}{l}
\dfrac{\partial v}{\partial s} + J_{t}(v) \dfrac{\partial v}{\partial t} = 0 \\[8pt]
v(s,0) \in L_0 \text{ and } v(s,1) \in L_1 \\[8pt]
E(v) < \infty
  \end{array}
\right.
\right \}.$$

In particular, if $v \in \cM(L_0,L_1)$, then as in 
Proposition~\ref{prop:fctlacet}:
$$E(v) = \int_{\rr \times [0,1]} v^\star \omega_M 
= \int_{\rr \times [0,1]} \omega_M \left(\frac{\partial v}{\partial s},
\frac{\partial v}{\partial t} \right) \;dt \;ds
= - \int_v \nu.$$

Denote also for $x,y \in L_0 \cap L_1$:
$$\cM(x,y)= \left \{
v \in \mathcal{C}^\infty(\rr \times [0,1],\ctg M) \left | 
  \begin{array}{l}
\dfrac{\partial v}{\partial s} + J_{t}(v) \dfrac{\partial v}{\partial t} = 0 \\[8pt]
\displaystyle \lim_{s \rightarrow - \infty} v(s,\cdot) = x \\[8pt]
\displaystyle \lim_{s \rightarrow + \infty} v(s,\cdot) = y
  \end{array}
\right.
\right \}$$
and
$$\cM^*(x,y)= \left \{ 
  \begin{array}{ll}
\cM(x,y)     & \text{ for } x \neq y \\
\cM(x,x) \bs \{x\} & \text{ for } x = y.
  \end{array}
\right.
$$

From \cite{MR965228} and \cite{MR1223659}, we have
\begin{thm}
\label{thm:reunion}
 $$ \cM(L_0,L_1)= \bigcup_{x,y \in L_0 \cap L_1} \cM(x,y).$$
\end{thm}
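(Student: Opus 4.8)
The plan is to prove the two inclusions separately, the trivial one being $\bigcup_{x,y}\cM(x,y) \subseteq \cM(L_0,L_1)$. Indeed, any $v \in \cM(x,y)$ solves the Cauchy-Riemann equation and satisfies the boundary conditions $v(s,0)\in L_0$, $v(s,1)\in L_1$; its energy is finite because, by the identity $E(v) = -\int_v \nu$ computed above, Stokes' formula expresses $E(v)$ as a difference of boundary integrals which converge thanks to the asymptotic conditions $\lim_{s\to\mp\infty} v(s,\cdot) = x,y$. The whole content of the theorem is therefore the reverse inclusion: every $v \in \cM(L_0,L_1)$ converges, uniformly in $t$, to intersection points $x,y \in L_0 \cap L_1$ as $s \to \mp\infty$.

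For this I would first exploit finite energy: since $E(v) = \int_{\rr \times [0,1]} |\partial_s v|^2 < \infty$, the energy on the bands $[s,s+1]\times[0,1]$ tends to $0$ as $s \to \pm\infty$. The key step is to upgrade this local $L^2$ decay to a pointwise gradient estimate $\|\partial_s v(s,\cdot)\|_{L^\infty} \to 0$. Because $v$ is $J_t$-holomorphic with totally real boundary on the two Lagrangians, the density $|\partial_s v|^2$ satisfies an elliptic differential inequality, and the mean value inequality for such maps converts small local energy into a pointwise bound; this is exactly the asymptotic machinery furnished by \cite{MR965228} and \cite{MR1223659}. Once the gradient decays, I would pick any sequence $s_n \to +\infty$ and observe that the translates $v(s_n+\cdot,\cdot)$ subconverge in $C^\infty_{\mathrm{loc}}$ to a solution with $\partial_s \equiv 0$; the Cauchy-Riemann equation then forces $\partial_t \equiv 0$ as well (as $J_t$ is invertible), so the limit is a constant map whose value lies in $L_0 \cap L_1$ by the boundary conditions. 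Since the intersection points are isolated under the transversality hypothesis underlying the construction, the limit is independent of the chosen sequence, so $\lim_{s\to+\infty} v(s,\cdot)$ exists and equals some $y \in L_0 \cap L_1$; the same argument at $-\infty$ yields $x \in L_0 \cap L_1$, whence $v \in \cM(x,y)$.

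The main obstacle is precisely the passage from finite energy to the pointwise gradient decay: a priori the sequences $s_n$ could be the site of energy concentration, i.e. bubbling of $J$-holomorphic disks or spheres, which would invalidate the mean value inequality. This is where the hypothesis that $L$ is monotone on the loops enters (see Remark~\ref{reme:sphmon}): monotonicity makes the symplectic area of any non-constant disk proportional to its Maslov index, and since that index is a positive multiple of $N_L$, every potential bubble carries energy bounded below by a fixed positive constant (in the exact case $k_L = 0$ no non-constant disk exists at all). Consequently no bubble can form in a region of small local energy, so concentration is excluded along the ends where the energy already tends to zero. With bubbling ruled out, the mean value inequality applies and the asymptotic convergence follows from the standard Floer estimates of \cite{MR965228} and \cite{MR1223659}, completing the inclusion $\cM(L_0,L_1) \subseteq \bigcup_{x,y} \cM(x,y)$.
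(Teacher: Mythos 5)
Your argument is correct in outline, and it is essentially the argument the paper relies on: the paper gives no proof of this theorem at all, it simply quotes it from Floer \cite{MR965228} and Oh \cite{MR1223659}, and those references establish it exactly along the lines you describe (decay of the energy on unit bands, mean value inequality, $C^\infty_{\mathrm{loc}}$-subconvergence of translates to constant solutions, and isolatedness of the intersection points under the transversality hypothesis to promote subconvergence to an actual limit).

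The one point to correct is the role you assign to monotonicity. This convergence theorem is a general fact about finite-energy strips with boundary on two transverse compact Lagrangians; it needs no monotonicity hypothesis whatsoever, which is precisely why the paper can quote it unchanged from the exact setting of \cite{MR965228} and the general setting of \cite{MR1223659}. The energy quantum that forbids concentration near the ends is the geometric constant in the mean value inequality (Gromov's monotonicity lemma for $J$-holomorphic curves, a statement about the metric $\omega_M(\cdot\,,J_t\,\cdot)$), not the Lagrangian constant $k_L N_L$; even more directly, a bubble appearing as $s \to \pm\infty$ would be the rescaled limit of regions whose energy tends to $0$, hence a non-constant $J$-holomorphic disk or sphere of zero energy, a contradiction requiring no threshold at all. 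Your threshold $k_L N_L$ is available under the standing hypotheses (the bubbles have boundary on $L_0$ or $L_1$, which are monotone on the disks with the same constant, and that suffices), so your proof is valid, but it inverts the logic of the mean value inequality --- whose hypothesis is small local energy, which finite total energy provides for free near the ends --- and it misplaces where monotonicity on the loops genuinely enters the paper: by Remark~\ref{reme:sphmon}(v) it is needed in Section~\ref{sec:Floerdiff} (Lemma~\ref{lem:L0fini}) to bound the energy of solutions of a fixed Maslov index so that the incidence numbers land in the Novikov ring, not for the present statement. A last, minor, gloss: in your ``trivial'' inclusion, the integrals $\int \lambda_M\left(\frac{\partial v}{\partial s}(s,i)\right)ds$ along $L_0$ and $L_1$ converge not merely because $v(s,\cdot)$ has limits, but because $\lambda_M$ restricts to a closed, hence locally exact, one-form on each Lagrangian, so the tail integrals satisfy the Cauchy criterion once $v(s,i)$ has entered a small ball around the limit point (alternatively, invoke the exponential decay that transversality provides).
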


Let $(\tJ_{t})$ be a family of almost complex structures on $\ctg \tM$ 
obtained by lifting the family $(J_{t})$ and let $\tcM(\bL_{0},\bL_{1})$, 
$\tcM(\tx,\ty)$, and $\tcM^{*}(\tx,\ty)$ be the spaces of solutions
in $\ctg \tM$ defined in a similar way as in $\ctg M$. Then these
spaces also satisfy Theorem~\ref{thm:reunion} and $\tp$ maps
$\tcM(\bL_{0},\bL_{1})$ onto $\cM(L_0,L_1)$, so that a solution and
its image by $\tp$ have the same energy.\\ 

In the alternative setting, we consider the trajectories of the
opposite of the gradient of the one-form $\hnu$ with respect to the
metric defined on $\Omega(L,L)$ by a family of compatible almost
complex structures $(\hJ_t)$.

Denote
$$\hcM(L,L) = \left \{
v \in \mathcal{C}^\infty(\rr \times [0,1],\ctg M)\left | 
  \begin{array}{l}
\dfrac{\partial \hv}{\partial s} + 
\hJ_{t}(\hv) \left ( \dfrac{\partial \hv}{\partial t} 
- X_{t}^{\alpha + dH_{t}(\hv)} \right ) = 0 \\[8pt]
v(s,0) \in L \text{ and } v(s,1) \in L \\[8pt]
E(v) < \infty
  \end{array}
\right.
\right \},$$
with the energy defined by the same formula as above.

If $\hv \in \cM(L,L)$, then again
\begin{eqnarray*}
E(\widehat{v})
&=& \int_{\rr \times [0,1]} \left \| {\frac{\partial \hv}{\partial s}} \right \|^2  \;dt \;ds
= \int_{\rr \times [0,1]} \omega_M \left(\frac{\partial \hv}{\partial s},
\hJ_t(\hv) \frac{\partial \hv}{\partial s} \right) \;dt \;ds\\
&=& \int_{\rr \times [0,1]} \omega_M \left(\frac{\partial \hv}{\partial s},
\frac{\partial \hv}{\partial t}- X^{\alpha+dH_t}(\hv) \right) \;dt \;ds\\
&=& \int_{\rr \times [0,1]} \omega_M \left(\frac{\partial \hv}{\partial s},
\frac{\partial \hv}{\partial t} \right) \;dt \;ds
- \int_{\rr \times [0,1]} (\alpha + dH_t) \left(\frac{\partial \hv}{\partial s} \right) \;dt \;ds\\
&=& - \int_{\hv} \hnu 
\end{eqnarray*}
and we define analogously the space of solutions $\hcM(x,y)$ and 
$\hcM^*(x,y)$.

\begin{rem}
\label{rem:correspondance}
There is also a correspondence between the two settings for the 
gradient trajectories. For all $\hv \in \hcM(L,L)$, we can associate 
the map $v$ defined by  
$$v(s,t) = \varphi_t^{-1}(\hv(s,t)).$$
Let $(J_t)$ and $(\hJ_t)$ be two families of compatible almost complex 
structures on $\ctg M$ such that 
$$\hJ_t = (\varphi_t)_\star J_t (\varphi_t^{-1})_\star.$$
Then 
$$\frac{\partial v}{\partial s} + J_{t}(v) \frac{\partial v}{\partial t} =
(\varphi_t^{-1})_\star \left[ \frac{\partial \hv}{\partial s} + 
\hJ_{t}(\hv) \left ( \frac{\partial \hv}{\partial t} 
- X_{t}^{\alpha + dH_{t}(\hv)} \right ) \right],$$
and 
$$E(\hv)=E(v)$$
so that the map $\hv \mapsto v$ defines a bijection between  
$\hcM(L,L)$ and $\cM(L_0,\varphi_1^{-1}(L_0))$.
\end{rem}

\subsection{Transversality and compactness}
\label{sec:transv&compa}

In order to define our Floer-type homology, let us check now the
transversality and compactness requirements.

\subsubsection{Transversality}

We have a classical transversality result of Floer theory:

\begin{thm}
\label{thm:transv}
  Assume that $L_0$ et $L_1$ are transverse.\\
  Then for a generic choice of $J_t$, the spaces $\mathcal{M}(x,y)$ 
  are manifolds of finite dimension, of local dimension at 
  $v \in \mathcal{M}(x,y)$ the Maslov-Viterbo index
  (see~\cite{MR926533}) of $v$.\\  
  The same result is true for $\widetilde{\mathcal{M}}(\tx,\ty)$ and  
  the map $\tp$ induces a diffeomorphism 
  $$\tp : \widetilde{\mathcal{M}}(\tx,\ty) \rightarrow \mathcal{M}(x,y)$$
  for $\tp(\tx)=x$ and $\tp(\ty)=y$.
\end{thm}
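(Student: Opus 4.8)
The plan is to realize each $\cM(x,y)$ as the zero set of a Fredholm section of a Banach bundle and then to apply the Sard--Smale theorem. First I would fix $p>2$ and form the Banach manifold $\mathcal{B}(x,y)$ of maps $v \in W^{1,p}_{\mathrm{loc}}(\rr \times [0,1],\ctg M)$ with boundary on $L_0,L_1$ that converge exponentially to the constant strips $x$ as $s \to -\infty$ and $y$ as $s \to +\infty$; the transversality of $L_0$ and $L_1$ guarantees that the asymptotic intersection points are nondegenerate, which is exactly what makes the exponential-decay setup and the Fredholm theory work. Over $\mathcal{B}(x,y)$ sits the Banach bundle $\mathcal{E}$ whose fibre at $v$ is $L^p(\Lambda^{0,1}(v^\star T\ctg M))$, and the Cauchy--Riemann operator $\bar\partial_{J}(v)=\partial_s v + J_t(v)\,\partial_t v$ is a section of $\mathcal{E}$ whose zero set is precisely $\cM(x,y)$. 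The linearization $D_v\bar\partial_J$ at a solution is a real-linear Cauchy--Riemann operator with totally real boundary conditions, hence Fredholm, and its index equals the Maslov--Viterbo index of $v$ by the index computation of~\cite{MR926533}.

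The heart of the matter is transversality: I would show that for a generic $J_t$ the operator $D_v\bar\partial_J$ is surjective at every $v \in \cM(x,y)$, so that $\cM(x,y)$ is a smooth manifold of dimension equal to the Fredholm index. To do this I introduce the universal moduli space over a Banach manifold $\mathcal{J}^\ell$ of compatible almost complex structures (using Floer's $C^\infty_\epsilon$ space so as to recover a $C^\infty$ conclusion) and prove that the universal linearized operator is surjective. By the usual duality argument this reduces to showing that no nonzero cokernel element can annihilate all the variations of $J_t$, and this is where the main obstacle lies. The crucial input is that every finite-energy trajectory $v$ has an injective point, i.e. a point $(s_0,t_0)$ with $\partial_s v(s_0,t_0)\neq 0$ whose image meets the strip only there; for transverse $L_0,L_1$ this follows from the unique-continuation and somewhere-injectivity results already available in~\cite{MR965228} and~\cite{MR1223659}. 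At such a point one can perturb $J_t$ freely, forcing the cokernel element to vanish there and then everywhere by unique continuation. Sard--Smale then yields a residual set of $J_t$ for which surjectivity holds at every solution, proving the first assertion.

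For the lifted spaces and the diffeomorphism I would exploit that $\tp:\ctg \tM \to \ctg M$ is a covering and that $\tJ_t$ is the lift of $J_t$. Since the domain $\rr \times [0,1]$ is simply connected, every $v \in \cM(x,y)$ lifts uniquely to a $\tJ_t$-holomorphic strip once the asymptotic endpoint $\tx$ over $x$ is fixed, and the boundary conditions on $\bL_0,\bL_1$ are respected thanks to the equivariance $\ti(\tx^g)=(\ti(\tx))^g$ of Lemma~\ref{lem:rev}; conversely $\tp$ carries each solution upstairs back to one downstairs. Because $\tp$ is a local diffeomorphism intertwining $\bar\partial_{\tJ}$ with $\bar\partial_J$, it identifies the two Cauchy--Riemann sections and their linearizations, so surjectivity upstairs is equivalent to surjectivity downstairs and the very same generic $J_t$ works for both. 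Hence $\tp$ restricts to a bijective local diffeomorphism $\tcM(\tx,\ty)\to\cM(x,y)$ between manifolds of equal dimension, i.e. a diffeomorphism. The only point requiring care is matching the exponential-decay asymptotics under the covering, which is immediate since $\tp$ is a local isometry for the lifted metric.
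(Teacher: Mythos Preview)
Your proposal is correct and is precisely the standard Floer transversality argument that the paper has in mind: the paper's own proof is a one-line reference to Floer~\cite{MR948771} and Oh~\cite{MR1223659} (noting the analogy with Damian's Theorem~3.12), and what you have written is exactly the content of those references together with the elementary covering-space observation for the second part. There is nothing to add.
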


\begin{proof}
  It is a transversality result analogous to \cite[Theorem
  3.12]{Mihai} and it can be proved as in~\cite{MR948771} (see
  also~\cite{MR1223659}).
\end{proof}

\begin{rem}
\label{rem:transversalite}
  In the case of the alternative setting, a one-form $\alpha+dH_t$
  being  given, there exists a generic Hamiltonian $h_t$ (with compact
  support) such that, if $(\psi_t)$ is the symplectic isotopy spanned
  by $\alpha+dH_t+dh_t$, then $L$ and $\psi_1(L)$ are transverse.

  Using the correspondence \ref{rem:correspondance}, we are then able
  to deduce from Theorem~\ref{thm:transv} that the spaces 
  $\widehat{\mathcal{M}}(x,y)$ are submanifolds for a generic choice  
  of family of compatible almost complex structure.
\end{rem}

\subsubsection{Compactness}

Let $x$ and $y$ be two intersection points of $L_0$ and $L_1$ and let  
$A > 0$. Denote by 
$$\cM^*_A(x,y)=\{v \in \cM^*(x,y) \;|\; E(v) \leq A\}$$
the space of solutions of finite energy between $x$ and $y$.

The translation in the $s$ variable (defined by $(\sigma \cdot v)(s,t) 
= v(\sigma+s,t)$) induces a free action of $\rr$ on $\cM^*(x,y)$. Let
$\mathcal{L}(x,y)$ denote the quotient $\cM^*(x,y)/\rr$.

Let us recall the result of convergence modulo bubbling of a sequence
of elements of $\cM^*_A(x,y)$ (as stated 
in \cite[Proposition 3.7]{MR1223659}): if $(v_n)$ is a sequence of
elements of $\cM^*_A(x,y)$ with a fixed index equal to $I$, then there 
exists a subsequence converging (modulo translations, i.e. in the
quotient $\mathcal{L}(x,y)$) to a ``cusp'' curve  $(\uv,\uw,\uu)$
(where $\uv$ is a finite collection of solutions 
$v^i \in \cM^*_A(z_i,z_{i+1})$, $\uw$ is a finite collection of 
$J$-holomorphic disks $w^j$ and $\uu$ is a finite collection of
$J$-holomorphic spheres $u^k$) such that
\begin{eqnarray*}
  \sum_i \int (v^i)^\star \omega + \sum_j \int (w^j)^\star \omega + 
\sum_k \int (u^k)^\star \omega & \leq & A \\
\sum_i \mu(v^i) + \sum_j \mu(w^j) + \sum_k 2 c_1(u^k) & = & \mu_0.
\end{eqnarray*}

In our case, the ambient symplectic manifold is the cotangent bundle
of the manifold $M$ so that no bubbling of $J$-holomorphic spheres 
occurs. 

\begin{prop}
\label{prop:compa}
  Let $L$ be a monotone Lagrangian manifold of Maslov number 
  $N_L \geq 3$.  
  Then:
  \begin{description}
  \item[(i)] For any sequence of elements $(v_{n})$ in the
    one-dimensional component of $\cM^*_A(x,y)$, there exists a
    sequence $(\sigma_{n})$ of real numbers, such that a subsequence
    of $(\sigma_{n} \cdot v_{n})$ converges in $\cM^*_A(x,y)$;
\item[(ii)] if $(v_n)$ is a sequence of elements of $\cM^*_A(x,z)$ of  
  index $2$, then
  \begin{itemize}
  \item either there exists a sequence $(\sigma_{n})$ of real numbers 
    such that a subsequence of $(\sigma_{n} \cdot v_{n})$ converges to
    a solution $v$ in $\cM^*_A(x,z)$;
  \item or there exists a pair of sequences
    $((\sigma^1_{n}),(\sigma^2_{n}))$ of real numbers and a pair of
    solutions $(v^1,v^2) \in \cM^*_A(x,y) \times \cM^*_A(y,z)$, for
    some intersection point $y$, such that, for every $i \in \{1;2\}$,
    a subsequence of $(\sigma^i_{n} \cdot v_{n})$ converges to $v^i$
    (in this case one says that $(v_n)$ converges to the broken orbit
    $(v^1,v^2))$.
\end{itemize}
\end{description}
\end{prop}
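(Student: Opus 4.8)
The plan is to reduce everything to the cusp-curve convergence statement quoted immediately above the proposition (Oh, \cite[Proposition 3.7]{MR1223659}) and then to rule out all but the listed configurations by an index and energy count, the key inputs being the exactness of $\omega_M$ and the hypothesis $N_L\geq 3$. Thus the genuine analytic content is imported as a black box, and what remains is bookkeeping entirely parallel to the exact case of Damian.

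First I would fix the two facts that constrain the bubbles. Since the ambient manifold is a cotangent bundle, $\omega_M=d\lambda_M$ is exact, so every $J$-holomorphic sphere has zero area and is therefore constant: no sphere bubbles occur and the terms $\sum_k 2c_1(u^k)$ drop out of the quoted index relation. For the disks, note that although $L_0$ and $L_1$ are not monotone on the loops, they are images of $L$ under symplectomorphisms, hence monotone on the disks with the same Maslov number $N_L$; therefore any non-constant $J$-holomorphic disk $w^j$ with boundary on $L_0$ or $L_1$ has positive area, so by monotonicity $\mu(w^j)>0$, and being a positive multiple of the minimal Maslov number it satisfies $\mu(w^j)\geq N_L\geq 3$. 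Each trajectory piece $v^i\in\cM^*_A(z_i,z_{i+1})$ is by definition non-constant, so its $\rr$-orbit forces its Maslov--Viterbo index to be $\geq 1$.

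For (i), I apply the convergence result to a sequence $(v_n)$ in the index-$1$ component, so the limiting cusp curve $(\uv,\uw,\uu)$ satisfies, after discarding spheres, $\sum_i\mu(v^i)+\sum_j\mu(w^j)=1$ with total area $\leq A$. A disk would contribute at least $3$ and a second trajectory piece at least $1$, so the only admissible configuration is a single trajectory piece of index $1$ and no disk; its endpoints are then $z_0=x$ and $z_1=y$, so after the centering translations $\sigma_n$ a subsequence of $(\sigma_n\cdot v_n)$ converges to an element of $\cM^*_A(x,y)$. For (ii) the same count with total index $2$ reads $\sum_i\mu(v^i)+\sum_j\mu(w^j)=2$; a disk bubble contributes at least $N_L\geq 3>2$ and is again impossible --- this is exactly the point at which $N_L\geq 3$ is needed, since an index-$2$ disk is precisely what could appear when $N_L=2$. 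The index $2$ is therefore split among trajectory pieces of index $\geq 1$, leaving only the single piece of index $2$ (convergence to some $v\in\cM^*_A(x,z)$) or two pieces of index $1$, that is $(v^1,v^2)\in\cM^*_A(x,y)\times\cM^*_A(y,z)$ for an intermediate intersection point $y$, the broken orbit. These are precisely the two stated alternatives.

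The hard part is not this combinatorics but the imported convergence theorem, whose proof (Gromov compactness with boundary, gluing of the energy quanta into trajectories, disks and spheres) is the real analytic labour. Within the argument above, the one step I would verify with care is the disk lower bound $\mu(w^j)\geq N_L$: I must make sure that monotonicity on the disks of $L_0$ and $L_1$ --- which is all that survives under the symplectic isotopy --- together with the definition of the Maslov number really pins every non-constant boundary-disk to Maslov index at least $N_L$, and that the same $N_L$ serves for disks attached to either $L_0$ or $L_1$.
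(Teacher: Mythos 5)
Your proof is correct and takes essentially the same approach as the paper: both treat Oh's cusp-curve convergence as the imported analytic input, exclude sphere bubbles because the ambient space is a cotangent bundle (exact $\omega_M$), exclude disk bubbles because $L_0$ and $L_1$ remain monotone on the disks under the symplectic isotopy so that any non-constant disk has $\mu(w^j)\geq N_L\geq 3$, and finish with the identical index count using that each trajectory component has Maslov--Viterbo index at least $1$ thanks to the free $\rr$-action.
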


\begin{rem}
\label{rem:NL=2}
  Proposition~\ref{prop:compa} will be used in
  Section~\ref{sec:Floerdiff} to prove the compactness of the space of 
  trajectories. Note that statement (i) in
  Proposition~\ref{prop:compa} also holds in the case $N_L=2$. We will
  deal with the convergence of a sequence of elements of
  $\cM^*_A(x,z)$ of index~$2$ in the case $N_L=2$ in the proof of
  Lemma~\ref{lem:dcarre}.
\end{rem}

\noindent
\textit {Proof of Proposition~\ref{prop:compa}.} 
\begin{description}
  \item[(i)] If $(v_n)$ is a sequence of elements of $\cM^*_A(x,y)$ of 
    index $1$, then there exists a subsequence that converges to a 
    << cusp >> curve $(\uv,\uw,\emptyset)$ such that 
    \begin{eqnarray}
      \sum_i \int (v^i)^\star \omega + \sum_j \int (w^j)^\star \omega 
      & \leq & A 
      \label{eqn:airebub} \\
      \sum_i \mu(v^i) + \sum_j \mu(w^j) & = & 1. \label{eqn:Maslovbub} 
    \end{eqnarray}
    Since the area of a $J$-holomorphic disk $w^j$ is non-negative,
    the monotonicity assumption on $L$ (the monotonicity on the disks
    is sufficient here) gives $\mu(w^j) \geq 3$.  
    Because of (\ref{eqn:Maslovbub}), there is no bubbling of
    $J$-holomorphic disk ($\uw = \emptyset$). 
    Moreover, the dimension of $\cM^*(x,y)$ is at least $1$ (because
    of the free action of $\rr$), so that the Maslov class of a
    solution $v^{i}$ is at least~$1$. Hence, the collection $\uv$ can
    only contain one element  that belongs to $v \in \cM^*_A(x,y)$.

  \item[(ii)] As in (i), no bubbling of $J$-holomorphic disks can
    occur. As a consequence, there is a subsequence converging to a
    broken orbit $\uv$ such that 
    $$\sum_i \mu(v^i) = 2$$
    and hence $\uv$ admits at most two components. \hfill \qedsymbol
\end{description}

For the construction of the complex we need a homotopy lemma:

\begin{lem}
\label{lem:hom}
Let $(v_n)$ be a sequence of elements of $\cM^*_A(x,y)$ of index
either~$1$ or $2$ having a subsequence converging either to  
$\uv = v^1 \in \cM^*_A(x,y)$ or to $\uv = \{v^1,v^2\}$ with 
$(v^1,v^2) \in \cM^*_A(x,y) \times \cM^*_A(y,z)$.

Let $\gamma_n: [-\infty; + \infty] \rightarrow L_0$ be the path
defined by $\gamma_n(s)=v_n(s,0)$ (extended at $s=-\infty$ by $x$ and
at $s=+\infty$ by $y$). 
Let $\gamma^i:[-\infty; + \infty] \rightarrow L_0$ be the paths
defined analogously for the $v^i$.

Then, for $n$ large enough, $\gamma_n$ is homotopic to either
$\gamma^1$ (when $\uv = v^1$) as a path from $x$ to $y$, or to the
concatenation of paths $\gamma^1 \star \gamma^2$ (when 
$\uv = \{v^1,v^2\}$) as a path from $x$ to $z$. 
\end{lem}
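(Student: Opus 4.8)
The plan is to deduce the homotopy statement from the convergence provided by Proposition~\ref{prop:compa} by means of a $C^0$-convergence argument, exploiting the compactness of $L$ (hence a positive injectivity radius for a fixed metric on $L_0$). The first observation is that the homotopy class rel endpoints of $\gamma_n$ is invariant under the $\rr$-action: translating $v_n$ by $\sigma$ replaces $\gamma_n$ by the reparametrized path $s \mapsto \gamma_n(s+\sigma)$, which shares the endpoints $x$ and $y$ at $\pm\infty$ and is therefore homotopic rel endpoints to $\gamma_n$. Thus I may freely replace $(v_n)$ by any of the translated subsequences furnished by Proposition~\ref{prop:compa}.

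Consider first the non-broken case $\uv = v^1$. After translation, a subsequence of $v_n$ converges to $v^1$ in $C^\infty_{loc}(\rr \times [0,1])$, and the point is to upgrade this to $C^0$-closeness of the boundary paths on all of $\rr$. Fix $\epsilon$ smaller than the injectivity radius of $L_0$. By the uniform energy bound $E(v_n) \leq A$ together with the (standard) exponential convergence of finite-energy solutions to their asymptotic limits $x$ and $y$, there is an $S>0$, independent of $n$, such that for $|s| \geq S$ both $\gamma_n$ and $\gamma^1$ remain within an $\epsilon/2$-ball of $x$ (for $s \leq -S$) or of $y$ (for $s \geq S$). On the compact region $[-S,S]$ the $C^\infty_{loc}$ convergence gives $d(\gamma_n(s),\gamma^1(s)) < \epsilon/2$ for $n$ large. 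Hence $\gamma_n$ and $\gamma^1$ are pointwise $\epsilon$-close for large $n$, and the homotopy sliding $\gamma_n(s)$ to $\gamma^1(s)$ along the unique minimizing geodesic between them is a homotopy rel endpoints, the two paths sharing the endpoints $x$ and $y$.

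For the broken case $\uv = \{v^1,v^2\}$ the new feature is the long ``neck''. By Proposition~\ref{prop:compa} there are sequences $\sigma^1_n,\sigma^2_n$ which, after relabelling, satisfy $\sigma^1_n < \sigma^2_n$ with $\sigma^2_n - \sigma^1_n \to +\infty$, and whose corresponding translates converge to $v^1$ (asymptotic to $x$ and $y$) and to $v^2$ (asymptotic to $y$ and $z$). On the two windows around $\sigma^1_n$ and $\sigma^2_n$ the argument above shows that the corresponding subpaths of $\gamma_n$ are homotopic rel endpoints to $\gamma^1$ and to $\gamma^2$, the matching endpoints lying near $y$. It remains to control $\gamma_n$ on the intervening neck $[\sigma^1_n + S,\; \sigma^2_n - S]$ and on the two far ends. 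Since the total energy is bounded by $A$ and concentrates on the two limiting pieces, the energy of $v_n$ over the neck tends to $0$; by the mean-value ($\epsilon$-regularity) inequality for the Cauchy--Riemann equation together with the asymptotics at $y$, the restriction of $v_n$ to the neck maps into an arbitrarily small ball about $y$ for $n$ large. That ball being contractible, the neck subpath of $\gamma_n$ is homotopic rel its endpoints to the constant path at $y$; the far ends likewise sit in small balls about $x$ and $z$. Concatenating the homotopies on the far ends, the two windows and the neck yields $\gamma_n \simeq \gamma^1 \star \gamma^2$ rel endpoints.

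The main obstacle is the neck estimate in the broken case: one must show that the long middle region, on which every fixed translate of $v_n$ eventually escapes every fixed compact window, nevertheless carries vanishing energy and hence maps into a shrinking neighborhood of the intermediate point $y$. This is precisely the ``long strip of small energy stays near a point'' phenomenon that underlies Floer-theoretic breaking and gluing, and it is the analytic heart of the lemma; by comparison, the single-trajectory case and the reduction to translated subsequences are routine.
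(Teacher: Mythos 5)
Your proof is correct and takes essentially the same route as the paper: the paper's own proof is a one-line reference to \cite[Lemma 3.16]{Mihai}, adding only that the index hypothesis excludes disk bubbling, and the argument behind that citation is exactly yours --- invariance of the homotopy class under the $\rr$-translations, $C^\infty_{loc}$ convergence combined with uniform control near the ends, and, in the broken case, the small-energy long-neck estimate forcing the intermediate region into a contractible ball about $y$. The neck control you single out as the analytic heart is indeed the key point, and it is supplied by the Gromov--Floer convergence statement the paper imports from \cite{MR1223659}.
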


\begin{proof}
The proof is similar to \cite[Lemma 3.16]{Mihai} if we add the
assumption on the index ensuring that no bubbling of $J$-holomorphic
disks can occur. 
\end{proof}

\subsection{The differential of the Floer complex}
\label{sec:Floerdiff}

Let $x$ and $y$ be two intersection points of $L_0$ and $L_1$. In this
section, we define an incidence number $[x,y]$. 

Let $\mathcal{L}^0(x,y)$ be the zero-dimensional component of 
$\mathcal{L}(x,y)$.
For all $z \in L_0 \cap L_1$, fix a lift $\tz \in \ctg \tM$.
For $g \in \pi_1(M)$, denote by $\cL^0_g(x,y) \subset \cL^0(x,y)$ the
subset of trajectories that lift to $\widetilde{\cL}(\tx^g, \ty)$
(with the same notation for the action of $\pi_1(M)$ as in
Lemma~\ref{lem:rev}). 

Let us state and prove a lemma that will replace 
\cite[Lemma 3.16]{Mihai} in our construction. 

\begin{lem} 
\label{lem:L0fini}
Assume that $N_L \geq 2$. For all $x$, $y$ in $L_0 \cap L_1$ and all
$g$ in $\pi_1(M)$, the set $\cL^0_g(x,y)$ is finite.

If $n_g$ denotes the cardinal modulo $\zz/2$, the number $\sum n_g g$
belongs to the Novikov ring $\Lambda_{-u}$. 
\end{lem}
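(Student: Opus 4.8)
The plan is to establish the common-energy phenomenon first: all trajectories in a fixed $\cL^0_g(x,y)$ carry the same energy $E(v)=-\int_v\nu$, so that finiteness reduces to the compactness statement already proved. I would then track how this common energy depends on $g$ through the flux $u$, and read off from the non-negativity of the energy exactly the summability condition defining $\Lambda_{-u}$.

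First I would show that $E$ is constant on $\cL^0_g(x,y)$. Given $v,v'\in\cL^0_g(x,y)$, both lift to elements of $\tcL(\tx^g,\ty)$, so the glued loop $v\#\overline{v'}$ lifts to an honest loop in $\Omega(\bL_0,\bL_1)$ and its boundary loop $\gamma_0$ lifts to a closed loop in $\bL$. Evaluating $\nu$ on it by Proposition~\ref{prop:fctlacet}, the flux term $u(\gamma_0)$ vanishes, because $\gamma_0$ lifting to a closed loop in $\bL$ forces $f_{\star}[\gamma_0]=0$ in $\pi_1(M)$ and $u$ factors through $\pi_1(M)$; the remaining term $k_L\bigl(\mu_{L_0}(\gamma_0)-\mu_{L_1}(\gamma_1)\bigr)$ equals $k_L$ times the difference of Maslov--Viterbo indices of $v$ and $v'$, which is zero since both come from index-one trajectories of $\cL^0(x,y)$. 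Hence $E(v)=E(v')=:A$, so $\cL^0_g(x,y)\subset\cL^0_A(x,y)$. By Proposition~\ref{prop:compa}~(i) together with the transversality of Theorem~\ref{thm:transv}, this is a compact zero-dimensional manifold, hence finite, which defines $n_g\in\zz/2$.

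For the Novikov property I would compare, for varying $g$, against a fixed reference trajectory $v_0\in\cL^0_{g_0}(x,y)$. The same computation via Proposition~\ref{prop:fctlacet} applied to $v\#\overline{v_0}$ now produces a genuine flux contribution: with the action convention of Lemma~\ref{lem:rev} the boundary loop satisfies $f_{\star}[\gamma_0]=g^{-1}g_0$, so $u(\gamma_0)=u(g_0)-u(g)$, while the Maslov term again vanishes by the index constraint. This yields $E(v)=C-u(g)$ for a constant $C$ independent of $g$. Since $E(v)\geq 0$, for every $A>0$ the set $\{g\mid n_g\neq 0,\ u(g)>-A\}$ consists of classes $g$ for which $\cL^0_g(x,y)$ contains a trajectory of energy $E(v)=C-u(g)<C+A$; all such trajectories lie in $\cL^0_{C+A}(x,y)$, finite by the argument above. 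Thus $(-u)(g)\to+\infty$ along the support of $\sum_g n_g\,g$, which is precisely the condition placing this element in $\Lambda_{-u}$.

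The delicate point is the identification of the boundary term $\mu_{L_0}(\gamma_0)-\mu_{L_1}(\gamma_1)$ with the difference of Maslov--Viterbo indices, and the consequent vanishing of the $k_L$-contribution in both computations: this is exactly where monotonicity on the loops (with constant $k_L$) is used, and where $N_L\geq 2$ is needed, the latter ensuring through Proposition~\ref{prop:compa} that no disk bubbling disturbs the index so that the index-one condition is genuinely preserved under convergence.
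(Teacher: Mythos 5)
Your proposal is correct and follows essentially the same route as the paper's proof: the same energy rigidity (monotonicity on the loops kills the Maslov term via the equal Maslov--Viterbo indices, the lifting hypothesis kills the flux term, giving a common energy on $\cL^0_g(x,y)$ and the relation $E(v)=C-u(g)$ across different $g$), combined with the same compactness input (Proposition~\ref{prop:compa}~(i), valid for $N_L\geq 2$ by Remark~\ref{rem:NL=2}, together with the transversality of Theorem~\ref{thm:transv}) to deduce finiteness and the $\Lambda_{-u}$ condition. The only deviation is organizational: where the paper invokes Lemma~\ref{lem:hom} to show that limits of sequences in $\cL^0_g(x,y)$ (and in $\bigcup_{-u(g)\leq C}\cL^0_g(x,y)$) retain the same lifting class, hence that these sets are themselves compact, you instead embed them into the finite set of index-one trajectories of energy bounded by $C+A$, a harmless streamlining of the same argument.
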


\begin{proof}
The elements of $\cL^0_g(x,y)$ are classes of solutions $v$ which
belong to the one-dimensional component of $\cM^*(x,y)$. 
We prove that these solutions all have the same energy. For that
purpose, we prove that two solutions from $x$ to $y$, which have the
same index, and which can be both lifted to trajectories from $\tx^g$
to $\ty$, have the same energy. 

We consider a solution 
$$v:[-\infty,+\infty]  \times [0,1] \longrightarrow \ctg M$$ 
in $\cM(x,y)$ as a path in $\Omega(L_0,L_1)$ from $x$ to $y$. 
If $v$ is such a path, let 
$$\bv:[-\infty,+\infty]  \times [0,1] \longrightarrow \ctg M$$ 
be the ``inverse'' path defined by  
$$\bv(s,t) = v(-s,t).$$

Let $v_1$ and $v_2$ be two elements of $\cM(x,y)$ satisfying 
$\mu(v_1)=\mu(v_2)$.
If we denote by $\gamma = v_2 \# \overline{v}_1$ the concatenation of
the paths $v_2$ and $\overline{v}_1$ (in this order), then $\gamma$ is
a loop in $\Omega(L_0,L_1)$ based in $x$ (see Figure
\ref{fig:lacetdiff}). Note that here, we use the notation
$\Omega(L_0,L_1)$ for the space of paths from $L_0$ to $L_1$ which
are piecewise smooth instead of just smooth as in
Section~\ref{sec:fctaction}. 

\begin{figure}[htbp]
  \begin{center}
   \psfrag{v2}{$v_2$}
   \psfrag{v1}{$\overline{v}_1$}
   \psfrag{x}{$x$}
   \psfrag{y}{$y$}
   \psfrag{L0}{$L_0$}
   \psfrag{L1}{$L_1$}
   \includegraphics[height=3cm]{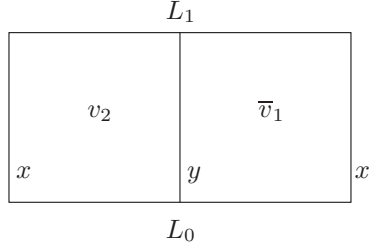}
    \caption{the loop $\gamma$} 
    \label{fig:lacetdiff}
  \end{center}
\end{figure}
Then
\begin{eqnarray*}
  \int_\gamma \nu &=& - \int \gamma^\star \omega_M \\
   &=& - \int v_2^\star \omega_M + \int v_1^\star \omega_M \\
   &=& E(v_1) - E(v_2).
\end{eqnarray*}
and thanks to the monotonicity on the loops, by
Proposition~\ref{prop:fctlacet}, 
$$\int_\gamma \nu = 
k_L (\mu_{L_0}(\gamma_0) - \mu_{L_1}(\gamma_1)) - u(\gamma_0)$$
with 
$\mu_{L_0}(\gamma_0) - \mu_{L_1}(\gamma_1) = \mu(v_2) - \mu(v_1) = 0$.

Moreover, if the lifts of $v_1$ and $v_2$ are trajectories from
$\tx^g$ to $\ty$, $\gamma_0$ can be lifted to a loop based in
$\tx^g$. Therefore, $\gamma_0$ is homotopic to the constant loop in
$\ctg M$ and this implies that $u(\gamma_0)=0$ and 
$\int_\gamma \nu = 0$. We have thus proved that $v_1$ and $v_2$ have
the same energy.\\

Assume that $N_L \geq 2$. We can then apply
Proposition~\ref{prop:compa} and Remark~\ref{rem:NL=2}: a  sequence of
solutions $(v_n)$ between $x$ and $y$ of Maslov index $1$ has a
subsequence converging to a solution of $\cM^*(x,y)$. Thanks to
Lemma~\ref{lem:hom}, this solution can be lifted to a trajectory from
$\tx^g$ to $\ty$. This means that the space $\cL^0_g(x,y)$ is a
compact space of dimension zero and hence it has only a finite number
of elements.\\

For the second part of the lemma, it is enough to show that for 
$C > 0$, the set $$\bigcup_{-u(g) \leq C} \cL^0_g(x,y)$$ is compact
(so that it is finite).
A sequence $(\underline{v}_n)$ in this space can be lifted to a
sequence $(v_n)$ in the one-dimensional component of $\cM^*(x,y)$. But
if $v_1$ and $v_2$ are two solutions from $x$ to $y$ with same Maslov
class, we have:
$$E(v_1) - E(v_2) = k_L (\mu(v_2) - \mu(v_1)) - u(\gamma_0) 
= - u(\gamma_0),$$
where $\gamma$ denotes the concatenation $v_2 \# \overline{v}_1$ as
above.

If $v_1$ can be lifted to a trajectory from $\tx^{g_1}$ to $\ty$ and
$v_2$ to a trajectory from $\tx^{g_2}$ to $\ty$, $\gamma$ can be lifted
as a path from $\tx^{g_2}$ to $\tx^{g_1}$ so that $u(\gamma_0) =
u(g_2^{-1} g_1)$ and  
$$E(v_1) - E(v_2) = u(g_2)-u(g_1).$$
As a consequence, if we consider a sequence of solutions $(v_n)$, each 
$v_n$ being lifted as a trajectory from $\tx^{g_n}$ to $\ty$ with 
$- u(g_n) \leq C$, then:
$$E(v_n) = E(v_0) - u(g_n) + u(g_0) \leq C + E(v_0) + u(g_0).$$
The energy of the elements of this sequence is bounded and we can
apply Proposition~\ref{prop:compa}: $(v_n)$ has a converging
subsequence and the limit of this subsequence can be lifted to a path
from $\tx^{g_{\infty}}$ to $\ty$ which satisfy: 
$$- u(g_\infty) = E(v_\infty) - E(v_0) + u(g_0) \leq C.$$
This means that $(\underline{v_n})$ has a converging subsequence in 
$\displaystyle \bigcup_{-u(g) \leq C} \cL^0_g(x,y)$ which is 
therefore compact. \end{proof}

We can now define the incidence number:
$$[x,y] = \sum_{g \in \pi_1(M)} n_g(x,y) g,$$
where $n_g(x,y)$ is the cardinality of $\cL^0_g(x,y)$.
We define the complex $C_\star (\bL_0,\bL_1,J_t)$ as the
$\Lambda_u$-vector space spanned by the intersection points of $L_0$
and $L_1$ endowed with the differential: 
$$\partial x = \sum_{g \in \pi_1(M), y \in  L_0 \cap L_1} n_g(x,y) g y.$$

\begin{lem}
\label{lem:dcarre}
  If $N_L  \geq 2$, $\partial \circ \partial = 0$. 
\end{lem}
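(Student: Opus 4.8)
The plan is to compute $\partial\circ\partial$ explicitly and to read off each coefficient as a count of boundary points of a compactified one-dimensional moduli space. Writing $\partial x=\sum_{g,y}n_g(x,y)\,gy$ and applying $\partial$ once more, the coefficient of $kz$ in $\partial^2x$ (for $k\in\pi_1(M)$ and $z\in L_0\cap L_1$) is
$$\sum_{y\in L_0\cap L_1}\ \sum_{gh=k}n_g(x,y)\,n_h(y,z)\ \in\zz/2,$$
each term counting broken orbits $(v^1,v^2)$ with $v^1\in\cL^0_g(x,y)$ and $v^2\in\cL^0_h(y,z)$. First I would verify the bookkeeping of the deck group: using the equivariance $\ti(\tx^g)=(\ti(\tx))^g$ of Lemma~\ref{lem:rev} together with Lemma~\ref{lem:hom}, the lift of $v^1$ from $\tx^g$ to $\ty$ concatenates with the lift of $v^2$ from $\ty$ to $\tz^{h^{-1}}$, and applying the deck transformation $h$ yields a path from $\tx^{gh}$ to $\tz$; so these broken orbits are exactly the ones labelled by $k=gh$. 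Hence the coefficient above counts the broken orbits in the boundary of the one-dimensional component $\cL^1_k(x,z)$ of index-$2$ trajectories from $x$ to $z$ that lift to $\widetilde{\cL}(\tx^k,\tz)$, whose energy is fixed by $k$ as in the estimate of Lemma~\ref{lem:L0fini}, so that $\overline{\cL^1_k(x,z)}$ is compact.

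Next, for $N_L\geq 3$ I would invoke Proposition~\ref{prop:compa}(ii): a sequence of index-$2$ trajectories either converges, after reparametrisation, or breaks into a pair of index-$1$ trajectories, and \emph{no} disk bubbling occurs. Combined with the standard gluing construction, which realises each broken orbit $(v^1,v^2)$ as the limit of exactly one end of $\cL^1_k(x,z)$, this shows that $\overline{\cL^1_k(x,z)}$ is a compact one-dimensional manifold whose boundary is precisely the set of broken orbits counted above. A compact one-manifold has an even number of boundary points, so the coefficient of $kz$ vanishes in $\zz/2$ for all $k$ and $z$, i.e. $\partial\circ\partial=0$.

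The remaining, and genuinely harder, case is $N_L=2$, which is exactly the convergence deferred in Remark~\ref{rem:NL=2}. Here an index-$2$ sequence may also degenerate by bubbling off a $J$-holomorphic disk of Maslov index $2$. I would control these bubbles by a dimension count: the Maslov index splits as $2=\mu(\text{strip})+\mu(\text{disk})$ with $\mu(\text{disk})\geq N_L=2$, so the surviving strip has index $0$ and must be constant; bubbling therefore occurs only when $z=x$, with the disk attached at $x$ on the $L_0$-boundary or on the $L_1$-boundary. These furnish the only additional ends of $\overline{\cL^1_k(x,x)}$, and by gluing each Maslov-$2$ disk through $x$ whose boundary induces the class $k\in\pi_1(M)$ contributes exactly one such end. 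Writing $a_k(L_i)$ for the number modulo $2$ of Maslov-$2$ $J$-disks through $x$ with boundary on $L_i$ inducing $k$, the even-boundary argument now reads
$$\Big(\text{coeff. of }kx\text{ in }\partial^2x\Big)+a_k(L_0)+a_k(L_1)\equiv 0.$$

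It then remains to prove $a_k(L_0)\equiv a_k(L_1)$, so that the bubbling contribution cancels and the coefficient vanishes as before; I expect this to be the main obstacle. The idea is that $a_k(L_i)$ is the homotopy-refined count of Maslov-$2$ disks through a point, which for a monotone Lagrangian is independent of the (generic) point and invariant under symplectic isotopy. Since $L_0=L$ and $L_1=\varphi_1(L)$ are related by the symplectic isotopy $(\varphi_t)$, transporting disks along $(\varphi_t)$ and using point-independence to move the marked point from $x$ to $\varphi_1(x)$ and back gives $a_k(L_0)\equiv a_k(L_1)$ for every $k$, so $a_k(L_0)+a_k(L_1)\equiv 0$; with the displayed relation this yields $\partial\circ\partial=0$ also when $N_L=2$. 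The delicate points to be checked are the gluing theorem at a boundary disk bubble (that it produces a single end, transverse to the broken-orbit ends) and the invariance of the refined disk count under $(\varphi_t)$, which is where the symplectic — rather than merely Hamiltonian — nature of the isotopy must be handled carefully.
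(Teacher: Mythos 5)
Your proposal follows essentially the same route as the paper's proof: compactification of the one-dimensional component by broken trajectories for $N_L \geq 3$, and, for $N_L = 2$, extra boundary ends consisting of a constant strip together with a Maslov-$2$ disk on $L_0$ or $L_1$, which cancel because the mod-$2$ count of such disks through a point is invariant under symplectic (not merely Hamiltonian) isotopy --- exactly the adaptation of Oh's argument that the paper makes. The one nuance you miss is that a disk bubble's boundary is null-homotopic in $\ctg M$ (it bounds the disk itself), so bubbling can only contribute to the coefficient of the identity class $e \in \pi_1(M)$ and your counts $a_k(L_i)$ vanish automatically for $k \neq e$; this simplifies, but does not alter, your argument.
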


\begin{proof}
In order to prove the relation $\partial^2 = 0$, one has to prove that
for all $g \in \pi_1(M)$ and all $x,z \in L_0 \cap L_1$, we have: 
\begin{equation}
  \label{eq:d2}
  \sum_{y \in L_0 \cup L_1, g', g'' \in \pi_1(M), g'g''=g} n_{g'}(x,y) 
n_{g''}(y,z) = 0.
\end{equation}

When $N_L \geq 3$, this is, using Proposition~\ref{prop:compa} as in
\cite[Lemma 3.18]{Mihai}, a consequence of the compactification of the
one-dimensional component of $\cL_g(x,y)$ with broken trajectories
(see figure \ref{fig:compacite}). 
This compactification is a compact $1$-dimensional manifold whose
boundary is 
$$\bigcup_{y \in L_0 \cup L_1, g', g'' \in \pi_1(M), g'g''=g} 
\cL^0_{g'}(x,y) \times \cL^0_{g''}(y,z).$$

\begin{figure}[htbp]
  \begin{center}
   \psfrag{x}{$x$}
   \psfrag{y}{$y$}
   \psfrag{z}{$z$}
   \includegraphics[height=5cm]{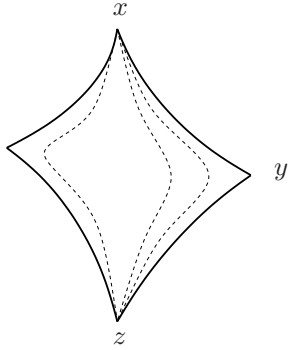}
   \caption{The compactification with broken trajectories} 
   \label{fig:compacite}
  \end{center}
\end{figure}

Let us consider now the case $N_L=2$. Oh noticed in \cite{MR1367384}
that his extension of the Floer complex to the monotone case is
possible under this assumption. This is also possible for the
Floer-Novikov complex.
Indeed, in the proof that the zero-dimensional component of 
$\cL_g(x,y)$ is compact, one only needs that $N_{L} \geq 2$ and it is
then possible to define the Floer differential.

The condition $N_L \geq 3$ is used to avoid bubbling in the
convergence of a sequence of solutions of index $2$ and prove that the
square of the differential is zero. Let us prove that this is also
true for $N_{L}=2$. 

The only sequences for which the bubbling of a $J$-holomorphic disk
can occur are sequences of solutions from an intersection point $x$ to
itself that have Maslov index $2$. Then the ``bubble'' also has Maslov
index $2$.

In this case, it is possible to compactify the one-dimensional
component of $\cL_g(x,x)$ by adding to the broken trajectories the
pairs formed by a constant trajectory and a $J$-holomorphic disk with
boundary either on $L_0$ or $L_1$ (this is similar to
\cite{MR1367384}). 

\begin{figure}[htbp]
  \begin{center}
   \psfrag{x}{$x$}
   \psfrag{DL0}{disk with boundary on $L_0$}
   \psfrag{DL1}{disk with boundary on $L_1$}
   \psfrag{Jstrip}{$J$ holomorphic strips}
   \includegraphics[height=6cm]{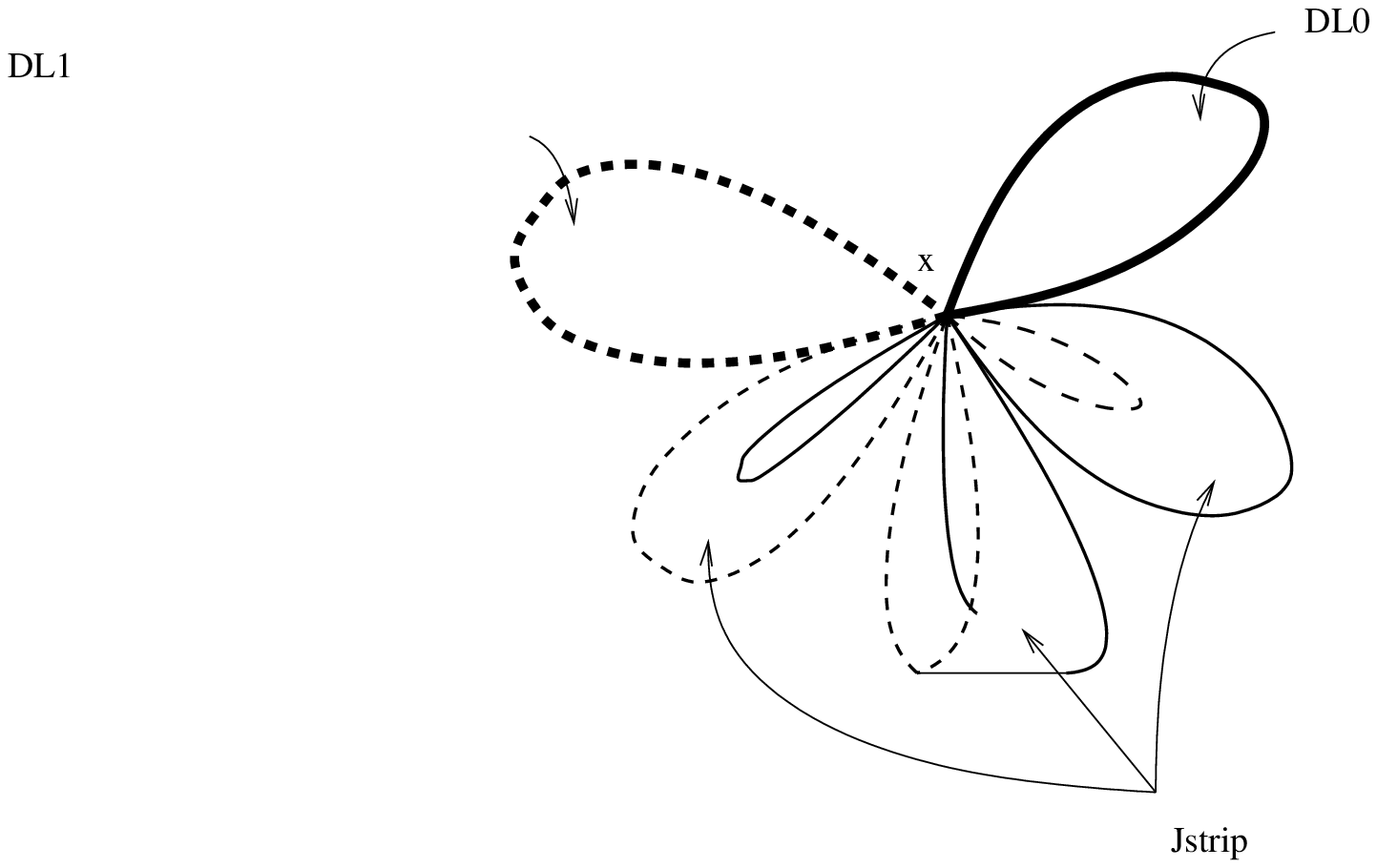} 
   \caption{The compactification with disks (the plain lines
     correspond to boundaries on $L_0$, the dashed lines to boundaries 
     on $L_1$)} 
    \label{fig:compacitebulle}
  \end{center}
\end{figure}

Note also that only a sequence of solutions that can be lifted to
paths from $\tx$ to $\tx$ can converge to a $J$-holomorphic disk, so
that this type of compactification is only needed for $\cL_e(x,x)$,
where $e$ is the identity element of $\pi_1(M)$. Hence (\ref{eq:d2})
holds also for $g \neq e$ with the previous type of compactification. 

When $g=e$, we have as in \cite{MR1367384}, 
$$ \sum_{y \in L_0 \cup L_1, g', g'' \in \pi_1(M), 
g'g''=g} n_{g'}(x,y) n_{g''}(y,z) = \Phi_{L_0}(x)+ \Phi_{L_1}(x)$$
where $\Phi_{L_i}(x)$ is the number (modulo $2$) of $J$-holomorphic
disks with Maslov index $2$ with boundary on $L_i$ and that pass
through the point $x$. Here we use that $\Phi_{L_i}(x)$ is preserved
under symplectic isotopies (in \cite{MR1367384}, Oh uses only
Hamiltonian isotopies but the proof is similar with symplectic
isotopies) to see that $\Phi_{L_0}(x)+ \Phi_{L_i}(x)= 0 \mod 2$. 
Therefore, $\partial \circ \partial= 0$ even in the case $N_L = 2$.
\end{proof}

\begin{rem}
 One can define the same way a complex 
$C_\star (\bL,\varphi_t, \widehat{J}_t)$ 
spanned by the zeros of the one-form $\hnu$ and define a differential
using the spaces $\widehat{\cM}(x,y)$. 
By the correspondence~\ref{rem:correspondance}, the
$\Lambda_u$-complexes  
$C_\star ( \bL,\varphi_t,  \widehat{J}_t)$ and 
$C_\star (\bL_0,\tvarphi_1\,^{-1}(\bL_0), J_t)$  are isomorphic.
\end{rem}

\subsection{Hamiltonian invariance}
\label{sec:invHam}

Denote $H_\star(\bL_0,\bL_1,J_t)$ the homology of the complex 
$C_\star(\bL_0,\bL_1,J_t)$ defined in Section~\ref{sec:Floerdiff}. 
We have assumed that $L_1 = \varphi_1(L_0)$ where the isotopy
$(\varphi_t)$ is supposed to be spanned by $\alpha + dH_t$ with
$\alpha$ a closed one-form on $M$ and $H$ a Hamiltonian with compact
support on $\ctg M \times [0,1]$.

We now prove that this homology does not depend on the generic choice
of the pair $(J_t, H_t)$. For that purpose, we will use the
``alternative'' setting and the complex 
$C_\star(\bL,\varphi_t,J_t)$. We denote by 
$H_\star (\bL,\varphi_t^{\alpha+dH_t},J_t)$ its homology.

\begin{thm}
\label{thm:invar-hamilt}
  For any two generic pairs $(H_t,J_t)$ and $(H'_t,J'_t)$, there
  exists an isomorphism 
  $$\Psi : H_\star(\bL,\varphi_t^{\alpha+dH_t},J_t) \longrightarrow 
  H_\star(\bL,\varphi_t^{\alpha+dH'_t},J'_t).$$
\end{thm}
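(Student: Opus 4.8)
The plan is to prove invariance by the standard Floer-theoretic continuation argument, adapted to the Novikov coefficients $\Lambda_u$ and the monotone setting. Given two generic pairs $(H_t,J_t)$ and $(H'_t,J'_t)$, I would choose a homotopy $(H^s_t, J^s_t)_{s \in \rr}$ of such data, equal to $(H_t,J_t)$ for $s \leq -R$ and to $(H'_t,J'_t)$ for $s \geq R$. Using the alternative setting, this $s$-dependent homotopy defines a moduli space of solutions $\hv$ of the corresponding $s$-dependent Cauchy-Riemann/gradient equation with boundary on $L$; counting the zero-dimensional component of these spaces (with the $\pi_1(M)$-labels coming from lifts to $\ctg\tM$, exactly as in Section~\ref{sec:Floerdiff}) yields a $\Lambda_u$-linear chain morphism $\Psi: C_\star(\bL,\varphi_t^{\alpha+dH_t},J_t) \to C_\star(\bL,\varphi_t^{\alpha+dH'_t},J'_t)$.

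The key steps, in order, are: first, establish transversality for the $s$-dependent moduli spaces for a generic homotopy, using Theorem~\ref{thm:transv} and Remark~\ref{rem:transversalite} adapted to the parametrized setting. Second, prove that the relevant counts $\cL^0_g$ are finite and assemble into elements of $\Lambda_u$; this requires the same energy-versus-index bound used in Lemma~\ref{lem:L0fini}, where monotonicity on the loops controls the energy of solutions sharing a fixed Maslov index and homotopy class, and Proposition~\ref{prop:compa} rules out disk bubbling when $N_L \geq 3$ (the case $N_L=2$ being handled as in Lemma~\ref{lem:dcarre}). Third, show $\Psi$ is a chain map by compactifying the one-dimensional parametrized moduli spaces: their boundary consists of broken configurations, one piece a Floer trajectory for one of the fixed pairs and the other a continuation solution, which gives the identity $\partial' \circ \Psi = \Psi \circ \partial$. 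Fourth, construct the reverse morphism $\Phi$ from the opposite homotopy and show $\Phi \circ \Psi$ and $\Psi \circ \Phi$ are chain homotopic to the identity by a second-order (homotopy-of-homotopies) argument, counting a one-dimensional family of moduli spaces whose boundary produces the chain homotopy operator.

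The main obstacle I expect is controlling energy in the parametrized setting so that the incidence coefficients genuinely land in $\Lambda_u$. In the $s$-independent case (Lemma~\ref{lem:L0fini}) two solutions of the same index and the same $\pi_1(M)$-label have \emph{equal} energy; for an $s$-dependent homotopy this exactness fails, and one only obtains an \emph{a priori} energy estimate of the form $E(\hv) \leq C_0 - u(g) + \text{const}$, where $C_0$ bounds the $s$-derivative of the Hamiltonian homotopy (the term $\int (\partial_s H^s_t)(\hv)\,ds\,dt$). I would handle this exactly as in the second half of the proof of Lemma~\ref{lem:L0fini}: show that for fixed $C$ the union $\bigcup_{-u(g)\leq C} \cL^0_g$ is compact, hence finite, so that $\sum n_g\, g \in \Lambda_u$ is well-defined. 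The remaining technical point, that no spheres bubble (automatic in a cotangent bundle) and that disk bubbling is excluded in the zero- and one-dimensional strata, follows from the monotonicity bound $\mu(w^j)\geq N_L$ together with the index bookkeeping, just as in Proposition~\ref{prop:compa}.

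Finally, because this argument is the direct Novikov analogue of Damian's invariance proof and of the standard monotone continuation argument, I would phrase the write-up by reducing to the relevant lemmas already proved rather than reproving transversality and compactness from scratch, noting that Remark~\ref{rem:correspondance} lets one translate freely between the alternative setting and $\cM(\bL_0,\bL_1)$ whenever convenient.
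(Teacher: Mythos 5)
Your proposal is correct and follows essentially the same route as the paper: a continuation morphism in the alternative setting, with the parametrized energy estimate (your bound involving the $\int \partial_s H$ term is exactly the content of the paper's Proposition~\ref{prop:invar-hamilt}, obtained there via concatenation with a fixed liftable path $w$ to a base point and monotonicity on loops), finiteness and the Novikov condition checked per $\pi_1(M)$-label, the chain-map property via broken trajectories, and a homotopy inverse from the reversed homotopy. The only slip is a sign bookkeeping issue ($-u(g)$ versus $u(g)$, i.e.\ $\Lambda_{-u}$ versus $\Lambda_u$), which mirrors the sign flip the paper itself records between the $\nu$- and $\hnu$-settings and does not affect the argument.
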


\noindent
\textit{Proof.}
As in the proof of \cite{Mihai}, define a morphism of
$\Lambda_u$-complexes  
$$\Psi_\star : C_\star (\bL,\varphi_t^{\alpha+dH_t},J_t) 
\longrightarrow C_\star (\bL,\varphi_t^{\alpha+dH'_t},J'_t)$$ 
associated to a family of functions $H_{s,t}: \ctg M \rightarrow \rr$
and a family of compatible almost complex structures $J_{s,t}$
continuous in $(s,t) \in \rr^2$ and satisfying:
$$(H_{(s,t)},J_{(s,t)}) = \left \{ 
  \begin{array}{l}
(H_t,J_t) \text{ for } s < -R \\
(H'_t,J'_t) \text{ for } s > R.
  \end{array}
\right.$$

Consider the space 
$$\cM_{H_{(s,t)},J_{(s,t)}}(L) = \left \{
v : \rr \times [0,1] \rightarrow \ctg M \left | 
  \begin{array}{l}
\dfrac{\partial v}{\partial s} + J_{s,t} \left ( \dfrac{\partial v}{\partial t} 
- X_{s,t}^{\alpha + dH_{s,t}} \right ) \\[8pt]
v(s,i) \in L \text{ pour } i =0,1, s \in \rr \\[8pt]
E(v) < \infty
  \end{array}
\right.
\right \}.$$
An element $v$ of this space converges to a zero $x$ of the one-form
$\hnu$ when $s$ goes to $- \infty$ and to a zero $y$ of the one-form
$\hnu\,'$ (which corresponds to the Hamiltonian~$H'_t$) when $s$ goes
to $+ \infty$. 
As in Theorem \ref{thm:reunion}, we have: 
$$\cM_{H_{(s,t)},J_{(s,t)}}(L)  = 
\bigcup_{x,y} \cM_{H_{(s,t)},J_{(s,t)}}(x,y)$$
with 
$$\cM_{H_{(s,t)},J_{(s,t)}}(x,y) = \left \{
v : \rr \times [0,1] \rightarrow \ctg M \left | 
  \begin{array}{l}
\dfrac{\partial v}{\partial s} + J_{s,t} \left ( \dfrac{\partial v}{\partial t} 
- X_{s,t}^{\alpha + dH_{s,t}} \right ) \\[8pt]
v(s,i) \in L \text{ for } i =0,1, s \in \rr \\[8pt]
\displaystyle \lim_{s \rightarrow - \infty} v(s,t) = x(t) \\[8pt]
\displaystyle \lim_{s \rightarrow + \infty} v(s,t) = y(t) 
  \end{array}
\right.
\right \},$$
where $x$ (respectively $y$) are the zeros of the one-form $\hnu$ 
(respectively $\hnu\,'$).\\

We also have a transversality result for these spaces: for a generic
choice of the pair $(H_{s,t},J_{s,t})$, the spaces
$\cM_{H_{(s,t)},J_{(s,t)}}(x,y)$ are manifolds of local dimension
given by the Maslov index of a solution.\\ 

We also need a compactness result:

\begin{lem}
  For all $A > 0$, the zero-dimensional component of  
$$\cM_{H_{(s,t)},J_{(s,t)}}(x,y;A) = 
\left \{ { v \in \cM_{H_{(s,t)},J_{(s,t)}}(x,y) 
\; | \; E(v) \leq A } \right \}$$
is finite.
\end{lem}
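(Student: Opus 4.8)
The plan is to establish the finiteness as a consequence of a Gromov--Floer compactness argument combined with an index count, which is precisely what the energy bound $A$ is designed to make possible. I would start with a sequence $(v_n)$ in the zero-dimensional component of $\cM_{H_{(s,t)},J_{(s,t)}}(x,y;A)$. Since each $v_n$ solves the $s$-dependent continuation equation with $E(v_n) \leq A$, a compactness result analogous to the one recalled before Proposition~\ref{prop:compa} (adapted from the $s$-independent setting of \cite{MR1223659} to the continuation equation) yields a subsequence converging modulo bubbling to a cusp configuration. This configuration consists of a principal continuation solution $v^0 \in \cM_{H_{(s,t)},J_{(s,t)}}(x',y')$, together with finite collections $\uv$ of genuine gradient trajectories of $\hnu$ (respectively $\hnu\,'$) broken off at the two ends, and $\uw$ of $J$-holomorphic disks with boundary on $L$. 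As already noted after Theorem~\ref{thm:reunion}, no $J$-holomorphic spheres appear, since $\omega_M$ is exact on $\ctg M$.

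The key step is the bookkeeping of Maslov index in this limit. Conservation of the index gives $\mu(v^0) + \sum_i \mu(v^i) + \sum_j \mu(w^j) = \mu(v_n) = 0$, the right-hand side vanishing because $(v_n)$ lies in the zero-dimensional component. Now each term on the left is nonnegative, and the broken pieces contribute strictly: by monotonicity on the disks, every bubble satisfies $\mu(w^j) \geq N_L \geq 2$; each broken trajectory $v^i$ is a nonconstant solution of the $s$-independent equation, so after quotienting by the free $\rr$-action its moduli space has nonnegative dimension and hence $\mu(v^i) \geq 1$; and by the transversality result stated just above for generic $(H_{s,t},J_{s,t})$, the continuation moduli space containing $v^0$ has nonnegative dimension, so $\mu(v^0) \geq 0$.

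Putting these inequalities into the index identity forces every bubble and every broken trajectory to be absent and $\mu(v^0) = 0$: the limit is a single continuation solution lying again in the zero-dimensional component, with the same endpoints $x$ and $y$ and energy $\leq A$. Thus $(v_n)$ has a subsequence converging in $\cM_{H_{(s,t)},J_{(s,t)}}(x,y;A)$ itself, which shows this space is sequentially compact; being a zero-dimensional manifold, it is therefore finite. I expect the main obstacle to be the first step, namely setting up the Gromov--Floer compactness theorem for the $s$-dependent continuation equation with the precise control provided by the energy bound $A$: it is this bound (in contrast with the undecorated moduli space, which may well be infinite because $\pi_1(M)$ is) that prevents the energies from escaping to infinity and makes the extraction of a convergent subsequence possible. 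Once compactness and the transversality of the continuation spaces are in hand, the index count excluding bubbling and breaking is routine, and it works verbatim for $N_L = 2$ as well, since any disk already costs $2 > 0$ while all other pieces are nonnegative.
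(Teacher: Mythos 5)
Your proposal is correct and follows essentially the same route as the paper: the paper disposes of this lemma in one line (``the proof is standard since no bubbling occurs in dimension $0$'', citing the exact-case analogue \cite[Lemma 3.22]{Mihai}), and your Gromov--Floer compactness under the energy bound $A$ plus the index count ($\mu(w^j)\geq N_L\geq 2$ for bubbles, $\geq 1$ for broken pieces, $\geq 0$ for the principal continuation solution, summing to $0$) is exactly the standard argument being invoked, spelled out in the monotone setting.
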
 

\begin{proof}
  The proof is standard since no bubbling occurs in dimension $0$ (see
  e.g. \cite[Lemma 3.22]{Mihai}).
\end{proof}

As before, we fix a lift $\tx$ in $\ctg \tM$ for every zero $x$ of the
one-form $\hnu$ and a lift $\ty$ for every zero $y$ of the one-form
$\hnu\,'$ (remember that the zeros of $\hnu$ are flow trajectories
beginning on $L$ and ending on $L$). Consider for all $g$ of
$\pi_1(M)$ and all zeros $x$ and $y$ of $\hnu$ and $\hnu\,'$
respectively, the space $\cM_{g,s}(x,y) \subset
\cM_{H_{(s,t)},J_{(s,t)}}(x,y)$ of solutions that can be lifted to
$\ctg \tM$ in paths from $\tx^g$ to $\ty$. We show that: 

\begin{prop}
\label{prop:invar-hamilt}
For any fixed index $I$, let  
$$\cM^I_{g,s}(x,y) = 
\cM_{g,s}(x,y) \cap \cM^I_{H_{(s,t)},J_{(s,t)}}(x,y)$$
be the space of elements of index $I$ in $\cM_{g,s}(x,y)$.
The energy of any element of $\cM^I_{g,s}(x,y)$ is bounded from above 
by some positive constant $A$, that is :
$$\cM^I_{g,s}(x,y) \subset \cM_{H_{(s,t)},J_{(s,t)}}(x,y;A).$$
\end{prop}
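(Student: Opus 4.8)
The plan is to transpose the energy identity of Lemma~\ref{lem:L0fini} to the $s$-dependent continuation equation, the only genuinely new feature being a correction term coming from the variation of $H_{s,t}$ in the $s$ variable, which I will bound using the compactness of the support of $H$. Since there is no action functional (Remark~\ref{rem:fonctionelle}), I would not try to bound the energy of a single solution directly, but rather compare the energies of two solutions sharing the same asymptotes $x,y$, the same index $I$, and the same lift datum $g$.

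First I would record that, exactly as in Section~\ref{sec:grad}, the compatibility of $J_{s,t}$ gives for every $v \in \cM_{H_{(s,t)},J_{(s,t)}}(x,y)$ the identity
$$E(v) = \int_{\rr \times [0,1]} v^\star \omega_M - \int_\rr \int_0^1 (\alpha + dH_{s,t})\left(\frac{\partial v}{\partial s}\right) dt\, ds.$$
Then, assuming $\cM^I_{g,s}(x,y)$ nonempty (otherwise there is nothing to prove), I fix a reference solution $v_1$ and take an arbitrary $v_2 \in \cM^I_{g,s}(x,y)$. Concatenating $v_2$ with the reversed path $\overline{v}_1$ produces a loop $\gamma = v_2 \# \overline{v}_1$ in $\Omega(L,L)$, and I would evaluate $E(v_2) - E(v_1)$ term by term along $\gamma$.

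The area contribution $\int v_2^\star \omega_M - \int v_1^\star \omega_M$ equals $\int_\gamma \omega_M$, which by Stokes and the monotonicity on the loops (cf.\ Proposition~\ref{prop:fctlacet} and~(\ref{eq:alt2})) is $k_L\left(\mu_L(\gamma_0) - \mu_L(\gamma_1)\right)$; since $\mu_L(\gamma_0) - \mu_L(\gamma_1) = \mu(v_2) - \mu(v_1) = I - I = 0$ (as in Lemma~\ref{lem:L0fini}), this term vanishes. The $\alpha$-contribution reduces, as in the computation of $\int_\gamma \hnu$ leading to~(\ref{eq:alt2}), to the period $u(\gamma_0)$; because $v_1$ and $v_2$ both lift to paths from $\tx^g$ to $\ty$ in $\ctg \tM$, the loop $\gamma_0$ lifts to a genuine loop, so $u(\gamma_0) = 0$. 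Finally, for the $H$-contribution I would write $dH_{s,t}\left(\frac{\partial v}{\partial s}\right) = \frac{\partial}{\partial s}\bigl(H_{s,t}(v)\bigr) - \frac{\partial H_{s,t}}{\partial s}(v)$; the total-derivative part integrates to $\int_0^1 \bigl(H'_t(y) - H_t(x)\bigr)\, dt$, which depends only on $x$ and $y$ and hence cancels in the difference, leaving only $\int_{-R}^R \int_0^1 \frac{\partial H_{s,t}}{\partial s}(v)\, dt\, ds$.

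This last term is the crux: since $H_{s,t}$ is independent of $s$ for $|s| > R$ and has compact support in $\ctg M$, the function $\frac{\partial H_{s,t}}{\partial s}$ is bounded, say by a constant $C$, so the integral is at most $2RC$ in absolute value, \emph{uniformly} in $v$. Combining the four estimates yields $|E(v_2) - E(v_1)| \leq 4RC$, so every element of $\cM^I_{g,s}(x,y)$ has energy at most $A := E(v_1) + 4RC$. The main obstacle is precisely the absence of a single-valued action: the monotonicity on the loops disposes of the area term once the indices agree, but controlling the period $u(\gamma_0)$ is possible only because we have fixed $g$ together with the lifts $\tx^g, \ty$, which forces $\gamma_0$ to close up in $\ctg \tM$. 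This is the same mechanism that made Lemma~\ref{lem:L0fini} work, and it is what obliges us to organize the entire continuation construction over the $\pi_1(M)$-orbits $\cM^I_{g,s}(x,y)$ rather than over $\cM_{H_{(s,t)},J_{(s,t)}}(x,y)$ as a whole.
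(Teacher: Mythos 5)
Your proposal is correct and follows essentially the same route as the paper: compare two solutions in $\cM^I_{g,s}(x,y)$, kill the area term via monotonicity on the loops once the indices agree, kill the period $u(\gamma_0)$ because the fixed lift datum $g$ forces the loop $\gamma_0$ to close up in $\ctg \tM$, and bound the remaining $\int\!\!\int \frac{\partial H}{\partial s}(s,t,v)\,dt\,ds$ terms uniformly by compact support of $\partial H/\partial s$. The only (harmless) difference is bookkeeping: the paper routes the computation through an auxiliary reference path $w$ from $y$ to a base point $z_0$ and the relative quantities $\int_{v\#w}\hnu - \int_w \hnu\,'$, whereas you compare $E(v_2)-E(v_1)$ directly along the loop $v_2 \# \overline{v}_1$; the contributions of $w$ and $z_0$ cancel in the paper's difference anyway, so the two arguments coincide.
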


\begin{cor}
  The space $\cM^0_{g,s}(x,y)$ is finite.
\end{cor}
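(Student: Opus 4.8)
The plan is to deduce finiteness directly by combining the uniform energy bound of Proposition~\ref{prop:invar-hamilt} with the compactness lemma stated just before it. All of the analytic content — the energy estimate and the absence of bubbling in dimension zero — is already available, so the corollary should reduce to a short set-theoretic argument.

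First I would specialize Proposition~\ref{prop:invar-hamilt} to the case $I=0$: it produces a constant $A>0$ with $\cM^0_{g,s}(x,y) \subset \cM_{H_{(s,t)},J_{(s,t)}}(x,y;A)$, so every index-zero solution lifting to a path from $\tx^g$ to $\ty$ has energy at most $A$. Next I would invoke the transversality result for the spaces $\cM_{H_{(s,t)},J_{(s,t)}}(x,y)$, according to which the local dimension at a solution equals its Maslov index; hence the index-zero solutions are precisely the points of the zero-dimensional component. In particular $\cM^0_{g,s}(x,y)$ is contained in the zero-dimensional component of $\cM_{H_{(s,t)},J_{(s,t)}}(x,y;A)$.

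Finally I would apply the preceding compactness lemma, which asserts that this zero-dimensional component is finite. Since a subset of a finite set is finite, $\cM^0_{g,s}(x,y)$ is finite, as claimed.

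I do not expect any genuine obstacle here: the only point worth stressing is that the uniform energy bound furnished by Proposition~\ref{prop:invar-hamilt} is exactly what allows one to pass from the compactness lemma (which is stated only for a fixed energy threshold $A$) to a finiteness statement over all of $\cM^0_{g,s}(x,y)$. Without such a bound the index-zero solutions with prescribed lift could a priori accumulate as their energy grows, and the conclusion would fail; so the substance of the corollary lies entirely in the already-established estimate, and what remains is purely formal.
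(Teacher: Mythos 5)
Your argument is correct and is exactly the deduction the paper intends: the corollary follows by taking $I=0$ in Proposition~\ref{prop:invar-hamilt} to get the uniform energy bound, noting via transversality that index-zero solutions form the zero-dimensional component, and then invoking the preceding compactness lemma for $\cM_{H_{(s,t)},J_{(s,t)}}(x,y;A)$. Nothing is missing, and your closing remark correctly identifies that all the substance lies in the energy estimate rather than in this formal step.
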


\noindent
\textit{Proof of Proposition~\ref{prop:invar-hamilt}.}
 Here again, we adapt the proof of \cite{Mihai} (see
 also~\cite{MR1308493}).  
 We do not have a primitive of $\hnu$ but it is possible to bound from
 above the difference of the energy of two solutions by a constant
 independent of the solutions.  
 
 Consider the norm defined by the compatible metric  
$\omega_M( \: \cdot \: ,J_{(s,t)} \cdot)$. The energy of a solution
$v$ of $\cM_{H_{(s,t)},J_{(s,t)}}(L)$ can be written:
\begin{eqnarray*}
  E(v) 
&=& \int_{\rr \times [0,1]} \left \| \frac{\partial v}{\partial s} \right  \|^2
\;dt \;ds\\
&=& \int_{\rr \times [0,1]} \omega_M \left( \frac{\partial v}{\partial s}, 
J_{s,t} \frac{\partial v}{\partial s} \right) \;dt \;ds \\
&=& \int_{\rr \times [0,1]} \omega_M \left( \frac{\partial v}{\partial s},
 \frac{\partial v}{\partial t} - X_{s,t}^{\alpha + dH_{s,t}} \right) \;dt \;ds  \\
&=& \int_v \omega_M - \int_{\rr \times [0,1]} (\alpha + dH_{s,t})\left( \frac{\partial v}{\partial s} \right) \;dt \;ds.
\end{eqnarray*}

Let $z_0 \in \Omega(L,L)$ and let $w$ be a fixed path in $\Omega(L,L)$ 
that joins $y$ to $z_0$.

We consider $v:[-\infty,+\infty]  \times [0,1]$ as a path in
$\Omega(L,L)$ from $x$ to $y$ and we use the concatenation $v \# w$ of
$v$ and $w$. 

\begin{figure}[htbp]
  \begin{center}
   \psfrag{v}{$v$}
   \psfrag{w}{$w$}
   \psfrag{x}{$x$}
   \psfrag{y}{$y$}
   \psfrag{z0}{$z_0$}
   \psfrag{L}{$L$}
   \includegraphics[height=3cm]{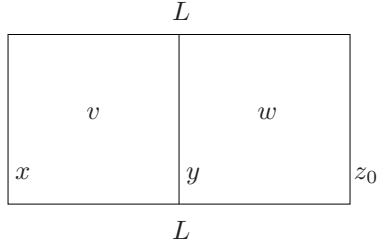}
    \caption{The path $v \# w$} 
    \label{fig:chemininvham}
  \end{center}
\end{figure}

Suppose that $w$ has been chosen in such a way that it can be lifted
to a path joining $\ty$ and $\tz_0$. 

We prove  
\begin{equation}
\label{eq:invar-hamilt}
\int_{v \# w} \hnu - \int_w \hnu\,' 
= -E(v) + \int_{\rr \times [0,1]} 
\frac{\partial H}{\partial s}(s,t,v) \;dt \;ds + C_1  
\end{equation}
where  
$$C_1  =  \int_{[0,1]} H_t(z_0) - H'_t(z_0) \; dt$$
is independant of $v$.\\

We have:
\begin{eqnarray*}
  \int_{v \# w} \hnu - \int_w \hnu\,' 
&=& 
- \int_{v \# w}  \omega_M 
+ \int_{\rr \times [0,1]} (\alpha+dH_t) \left(\frac{\partial v}{\partial s} 
\right)  \;dt \;ds \\
&&+ \int_{\rr \times [0,1]} (\alpha+dH_t) \left(\frac{\partial w}{\partial s} 
\right)  \;dt \;ds\\
&& + \int_{w}  \omega_M 
- \int_{\rr \times [0,1]} (\alpha+dH'_t) \left(\frac{\partial w}{\partial s} 
\right)  \;dt \;ds \\
&=& 
- \int_{v}  \omega_M + \int_{\rr \times [0,1]} \frac{\partial }{\partial s} \left( {H_t(v) + H_t(w) 
- H'_t(w) } \right) \;dt \;ds \\
&&+  \int_{\rr \times [0,1]} \alpha \left({\frac{\partial v}{\partial s}} \right),
\end{eqnarray*}
with
\begin{align*}
\int_{\rr \times [0,1]} \frac{\partial }{\partial s} &\left( {H_t(v) +
    H_t(w) - H'_t(w) } \right) \;dt \;ds = \\
&= \int_{[0,1]} H'_t(y) - H_t(x) \; dt +  \int_{[0,1]} H_t(z_0) - H'_t(z_0)  \; dt \\
&= \int_{\rr \times [0,1]} \frac{\partial }{\partial s} H_{s,t}(v)
\;dt \;ds + C_1 \\
&= \int_{\rr \times [0,1]} dH_{s,t} \left({\frac{\partial v}{\partial s}} \right) \;dt \;ds + \int_{\rr \times [0,1]} \frac{\partial H}{\partial s}(s,t,v) \;dt \;ds + C_1.
\end{align*}
Consequently 
\begin{eqnarray}
\int_{v \# w} \hnu - \int_w \hnu\,' 
&=& 
- \int_v \omega_M + \int_{\rr \times [0,1]} (\alpha + dH_{s,t}) \left( \frac{\partial v}{\partial s} \right) \;dt \;ds \nonumber \\
&& + \int_{\rr \times [0,1]} \frac{\partial H}{\partial s}(s,t,v) \;dt \;ds + C_1 \nonumber \\
\int_{v \# w} \hnu - \int_w \hnu\,' 
&=& -E(v) + \int_{\rr \times [0,1]} \frac{\partial H}{\partial s}(s,t,v) \;dt \;ds + C_1.
\end{eqnarray}

Now, let $v_1$ and $v_2$ be two elements of $\cM_{g,s}(x,y)$. From  
(\ref{eq:invar-hamilt}), we have:
\begin{eqnarray*}
E(v_1)-E(v_2)&=& - \int_{v_1 \# w} \hnu + \int_{v_2 \# w} \hnu \\
&& +  \int_{\rr \times [0,1]} \frac{\partial H}{\partial s}(s,t,v_1) \;dt \;ds
-  \int_{\rr \times [0,1]} \frac{\partial H}{\partial s}(s,t,v_2) \;dt \;ds.
\end{eqnarray*}
As 
$$\frac{\partial H}{\partial s}: \rr \times \ctg M 
\rightarrow \rr$$ has compact support, there exists a constant 
$C_2 \geq 0$ that does not depend on the $v_i$ ($i=1,2$) such that  
$$\int_{\rr \times [0,1]} \frac{\partial H}{\partial s}(s,t,v_1) \;dt \;ds
-  \int_{\rr \times [0,1]} \frac{\partial H}{\partial s}(s,t,v_2) \;dt \;ds \leq C_2.$$

Moreover, if $\gamma$ is a loop (based in $x$) of $\cM(L,L)$ obtained
by concatenation of the paths $v_2 \# w$ and $\overline{v_1 \# w}$,
then by the monotonicity condition (and Formula~(\ref{eq:alt2}))
\begin{eqnarray*}
 -  \int_{v_1 \# w} \hnu + \int_{v_2 \# w} \hnu
&=&  \int_{\gamma} \hnu \\
&=&  k_L \left( \mu_L(\gamma_0) - \mu_L(\gamma_1) \right) + 
u \left( \gamma_0 \right)
\end{eqnarray*}
with $ \mu_L(\gamma_0) - \mu_L(\gamma_1) = \mu(v_2)-\mu(v_1) = 0$.

The paths $v_1$ and $v_2$ belong to $\cM_{g,s}(x,y)$, so that $\gamma$ can
be lifted to a path from $\tx^g$ to $\tx^g$. The path $\gamma_0$ is
then homotopic to the constant path and we obtain the inequality: 
$$E(v_1)-E(v_2) \leq C_2.$$

This proves that if we fix an element $v_0$ in $\cM_{g,s}(x,y)$, 
then for all 
$v$ in $\cM_{g,s}(x,y)$, $E(v) \leq E(v_0) + K$ and consequently  
 $\cM_{g,s}(x,y)$ is contained in $\cM_{H_{(s,t)},J_{(s,t)}}(x,y;A)$ 
for some positive constant $A$. 
This proves Proposition~\ref{prop:invar-hamilt}. \hfill \qedsymbol\\

The space $\cM^0_{g,s}(x,y)$ is thus finite and we can define the
morphism of complexes  
$$\Psi_\star : C_\star (\bL,\varphi_t^{\alpha+dH_t},J_t) 
\longrightarrow C_\star (\bL,\varphi_t^{\alpha+dH'_t},J'_t)$$
by 
$$\Psi_\star (x) = \sum_{g \in \pi_1(M), y} m_g(x,y) g y$$
where $m_g(x,y)$ is the cardinality of $\cM^0_{g,s}(x,y)$ modulo $2$. 

In order to check that the coefficients belong to $\Lambda_u$, we use
the computations in the proof of \ref{prop:invar-hamilt} but this time
with $v_1$ in $\cM_{g_1,s}(x,y)$ and $v_2$ in $\cM_{g_2,s}(x,y)$ for
two elements $g_1$ and $g_2$ of $\pi_1(M)$. 
The loop $\gamma$ can then be lifted to a path from $\tx^{g_2}$ to
$\tx^{g_1}$ so that 
$u(\gamma_0)= u(g_2^{-1}g_1) = - u(g_2) + u(g_1)$ and
$$E(v_1)-E(v_2) \leq u(g_1) - u(g_2) + C_2.$$
If $v_0$ is a fixed element of $\cM_{g_0,s}(x,y)$, we have for all $v$
of $\cM_{g,s}(x,y)$ with $u(g) < C$, 
$$E(v) \leq E(v_0) + C - u(g_0) + C_2,$$
and this implies that 
$\displaystyle \bigcup_{u(g)<C} \cM^0_{g,s}(x,y)$ is contained in
$\cM^0_{H_{(s,t)},J_{(s,t)}}(x,y;A)$ for some positive constant $A$,
so that this union is finite. \\

We use the usual methods of Floer theory to finish the proof of the
theorem:  
\begin{itemize}
\item The fact that $\Psi_\star$ commutes with the differentials comes
  from the study of the compactification with the help of broken
  trajectories of the \hbox{$1$-dimensionnal} component of
  $\cM_{H_{(s,t)},J_{(s,t)}}(x,y)$.  
\item The map $\Psi_\star$ induces an isomorphism in homology: to
  prove this, it is enough to consider the morphism defined
  analogously between 
  $$C_\star (\bL,\varphi_t^{\alpha+dH'_t},J'_t) 
  \mbox{ and } C_\star (\bL,\varphi_t^{\alpha+dH_t},J_t)$$
  and to show that the composition of these morphisms are homotopic to
  the identity. \hfill \qedsymbol\\ 
\end{itemize}

\begin{notn}
By Theorem \ref{thm:invar-hamilt}, the homology of the complex  
$C_\star (\bL,\varphi_t,J_t)$ only depends on the flux $u$ of the
symplectic isotopy $(\varphi_t)$. Hence we will denote its homology by
$FH(\bL,u)$ in the following.
\end{notn}

\subsection{Floer-Novikov complex over $\Lambda_{f^\star u}$}
\label{sec:surfstaru}

Thanks to Lemmata~\ref{lem:L0fini}, \ref{lem:dcarre}
and~\ref{lem:hom}, one can apply~\cite[Proposition 3.25]{Mihai} and
define a $\Lambda_{f^\star u}$-complex $C_\star (L,\varphi_t,J_t)$
spanned by the intersection points of $L$ and $\varphi_1(L)$ such that 
$f_\star: \pi_1(L) \rightarrow \pi_1(M)$ induces a morphism from
$C_\star (\bL,\varphi_t,J_t)$ to $C_\star (\bL,\varphi_t,J_t)$ via the
ring morphism $f: \Lambda_{f^\star u} \rightarrow \Lambda_u$.

The differential of this complex is defined for $x$ in $L_0 \cap L_1$
by 
$$\partial x = \sum_{y \in  L_0 \cap L_1} [x,y]^{\, \sim} \; y,$$
with $$[x,y]^{\,\sim} = \sum_{h \in \pi_1(L)} \#_2 \tcL_h (x,y) \; h,$$
where for any $h \in \pi_1(L)$, $\#_2 \tcL_h (x,y)$ is the set of
paths in $L$ from $x$ to $y$ which lift to the universal covering
$\tL$ of $L$ to paths from $h \tx$ to $\ty$.
One can prove as in Section~\ref{sec:invHam} that the homology of this
complex does not depend on the generic choice of the pair $(J_t,H_t)$.
We will denote this homology $FH(L,u)$.

\section{Floer homology and Novikov theory}
\label{sec:FloerNov}

In this section, we prove Theorem~\ref{thm:suitespecintro} of the
introduction. We deduce this theorem from an analogous result which relates 
$FH(\bL,u)$ and the Novikov homology $H(\bL, f^\star u)$ of $L$
associated to $f^\star u$ and the covering $\bL \rightarrow L$
(defined in Lemma~\ref{lem:rev} as the pull-back of the covering 
$\ctg \tM \rightarrow \ctg M$, see also Remark~\ref{rem:revbar}).

\subsection{Relation between $FH(\bL,u)$ and $H(\bL,f^\star u)$}
\label{sec:FHbetHb}

We first prove that the Floer-Novikov homology $FH(\bL,u)$ is invariant
by small rescaling of $u$:  

\begin{prop}  
\label{prop:42a} 
Let $u$ be an element of $H^1(M)$.\\
Assume that the Lagrangian submanifold $L$ is monotone on the loops
with Maslov number $N_L \geq 2$. 

Then there exists $\varepsilon > 0$ such that for every real number
$\sigma$ satisfying $| \sigma | < \varepsilon$, 
$$FH(\bL, (1+\sigma) u) \simeq FH(\bL,u).$$
\end{prop}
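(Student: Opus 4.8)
The plan is to construct continuation maps between the two Floer--Novikov complexes and to show that, for $|\sigma|$ small, they are mutually inverse isomorphisms in homology. The first observation is that the coefficient ring does not change: since $1+\sigma>0$, the condition $u(g_i)\to+\infty$ defining $\Lambda_u$ is equivalent to $(1+\sigma)u(g_i)\to+\infty$, so $\Lambda_{(1+\sigma)u}=\Lambda_u$ (and likewise $\Lambda_{-(1+\sigma)u}=\Lambda_{-u}$). Thus both $FH(\bL,u)$ and $FH(\bL,(1+\sigma)u)$ are homologies of complexes over the \emph{same} Novikov ring, and it suffices to produce a chain map over this ring inducing an isomorphism. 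By Theorem~\ref{thm:invar-hamilt} I am free to compute the two homologies with convenient isotopies, so I would work in the alternative setting of Section~\ref{sec:fctaction}, taking $\hnu$ associated to $\alpha+dH_t$ (flux $u$) and $\hnu\,'$ associated to $(1+\sigma)\alpha+dH_t$ (flux $(1+\sigma)u$).

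To pass from one to the other I would interpolate the closed form, setting $\alpha_s=(1+\beta(s)\sigma)\,\alpha$ for a monotone function $\beta:\rr\to[0,1]$ equal to $0$ for $s<-R$ and to $1$ for $s>R$, and count, for generic $s$-dependent data $(H_{s,t},J_{s,t})$, the index-zero solutions of the corresponding continuation equation that lift to paths from $\tx^g$ to $\ty$; transversality is obtained as in Theorem~\ref{thm:transv} and Remark~\ref{rem:transversalite}. The core of the argument is the energy estimate adapting Proposition~\ref{prop:invar-hamilt}: repeating that computation with the $s$-dependent form $\alpha_s$ produces, besides the compactly supported term $\int \tfrac{\partial H}{\partial s}$ and the constant $C_1$, one new contribution
$$\int_{\rr\times[0,1]}\beta'(s)\,\sigma\,\alpha\!\left(\frac{\partial v}{\partial s}\right)\,dt\,ds.$$
Since $J$-holomorphic strips in $\ctg M$ stay in a fixed compact set and $\beta'$ has compact support in $s$, Cauchy--Schwarz bounds this term by $C\,\sigma\,\sqrt{E(v)}$. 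Taking the difference of the energies of two solutions $v_1,v_2$ in the same class $g$ annihilates the monotonicity and action terms exactly as in Lemma~\ref{lem:L0fini} (equal Maslov indices, and $u(\gamma_0)=0$ because the loop lifts to a based loop), leaving
$$|E(v_1)-E(v_2)|\le C_2+C\,\sigma\left(\sqrt{E(v_1)}+\sqrt{E(v_2)}\right).$$
Choosing $\varepsilon$ so that for $|\sigma|<\varepsilon$ this inequality can be absorbed into its left-hand side yields a uniform energy bound on each $\cM_{g,s}(x,y)$, and comparing different classes gives an estimate of the form $E(v)\le \mathrm{const}-(1+\sigma)u(g)$, ensuring that the continuation coefficients $\sum_g m_g(x,y)\,g$ converge in the Novikov ring.

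With these bounds the continuation map $\Phi$ is a well-defined morphism of complexes, and it is a chain map because the boundary of the compactified one-dimensional continuation moduli spaces consists of broken configurations, exactly as in Section~\ref{sec:invHam}. Running the same construction with $\beta$ reversed gives a map $\Phi'$ in the opposite direction, and the usual gluing/homotopy argument of Floer theory shows that $\Phi'\circ\Phi$ and $\Phi\circ\Phi'$ are chain homotopic to the identity, whence $FH(\bL,(1+\sigma)u)\simeq FH(\bL,u)$. I expect the only genuine difficulty to be the energy estimate above: unlike the Hamiltonian term $\tfrac{\partial H}{\partial s}$, which is compactly supported and bounded by a universal constant, the flux-variation term is \emph{not} compactly supported on $\ctg M$ and is controlled only through the factor $\sigma$; it is precisely the need to absorb this term that forces the restriction $|\sigma|<\varepsilon$, and the same control is what the homotopies between the two composites require.
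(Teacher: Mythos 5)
Your overall architecture (work in the alternative setting, build continuation maps by interpolating the closed form, bound energies within each lifting class $g$, deduce Novikov convergence, then run the standard chain-homotopy argument) is the same as the paper's, but the central energy estimate is misidentified, and this is a genuine gap. Writing the energy identity for a solution of your continuation equation with $\alpha_s=(1+\beta(s)\sigma)\alpha$ gives
$$E(v)=\int v^\star\omega_M-\int_{\rr\times[0,1]}\bigl(\alpha_s+dH_{s,t}\bigr)\!\left(\frac{\partial v}{\partial s}\right)dt\,ds,$$
so the contribution that is new relative to the computation of Proposition~\ref{prop:invar-hamilt} is $-\sigma\int\beta(s)\,\alpha\bigl(\frac{\partial v}{\partial s}\bigr)\,dt\,ds$, with $\beta$ --- not $\beta'$ --- as the factor. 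This term is supported on all of $[-R,+\infty)\times[0,1]$: it is \emph{not} compactly supported in $s$, and Cauchy--Schwarz on an infinite strip does not bound $\int\|\partial v/\partial s\|$ by $\sqrt{E(v)}$. To make a $\beta'$ appear you must integrate by parts against a primitive $F_v$ of $v^\star\alpha$ on the strip (no global primitive exists on $\ctg M$, since $u\neq 0$), and this produces boundary terms at $s=\pm\infty$ --- flux integrals of $\alpha$ along the infinite tails of $v$ --- which are exactly the quantities that Cauchy--Schwarz cannot control by the energy. Consequently the inequality $|E(v_1)-E(v_2)|\le C_2+C\,\sigma(\sqrt{E(v_1)}+\sqrt{E(v_2)})$ is not established, and everything downstream (the uniform bound on $\cM_{g,s}(x,y)$, the Novikov convergence of the coefficients, the homotopies between the composites) rests on it.

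This difficulty is precisely why the paper does not interpolate with $\sigma\alpha$ for an arbitrary representative $\alpha$ of $u$. It first constructs a special representative $\eta\in u$ vanishing on a neighbourhood $\cV$ of the zero-trajectories: this is possible in a cotangent bundle because the flow of $X^\alpha$ preserves the fibers, so after a small Hamiltonian perturbation the zeros of $\hnu$ project into balls of radius $c_1\delta$ in $M$, and one chooses $\eta$ to vanish on those balls. With this choice the problematic term $\chi(s)\sigma\,\eta(\partial v/\partial s)$ vanishes whenever $v(s,\cdot)$ lies in $\cV$ --- in particular along the infinite tails, where the solution is close to its asymptotic zeros --- and when $v(s,\cdot)$ leaves $\cV$, Lemma~\ref{lem:4.4} provides the uniform lower bound $c_2$ that lets the term be absorbed pointwise in $s$, provided $\varepsilon\|\eta\|<c_2/3$. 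Note that this is also where the smallness of $\sigma$ genuinely enters; in your version the inequality $E\le a+b\sqrt{E}$ self-absorbs for \emph{every} $\sigma$, so your argument would never actually use $|\sigma|<\varepsilon$ --- a further sign that the estimate cannot be correct as stated.
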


\begin{proof}
In the exact case, Damian uses the symplectic isotopy of $\ctg L$
spanned by a $1$-form in the class of $f^\star u$. Thanks to the
exactness of $L$, he extends it to $\ctg M$, so that the zeros of the
associated one-form are constant paths. 
This is particularly useful for the choice of a one-form representing
$u$ in the definition of $FH(\bL,(1+\sigma)u)$.

In the monotone case, we cannot use this result on the extension of
symplectic isotopies of $\ctg L$ to $\ctg M$. But, what we actually 
need for the proof (see \cite{Mihai} and also \cite{MR1308493}) 
is a symplectic isotopy with the property that the projection of 
the flow trajectories on the base space $M$ (and in particular the 
projection of the zeros of the one-form $\hnu$ associated to this 
isotopy) lie in ``small'' balls. 

More precisely, let $\alpha$ be a closed one-form in the class of $u$ 
and $J$ be a fixed compatible almost complex structure. Assume that
this almost complex structure induces a complete metric $g_J$ on 
$\ctg M$.

\begin{lem}
  There exists a constant $c_1 > 0$ such that, for every $\delta > 0$, 
  there exists a Hamiltonian 
  $$H: [0,1] \times \ctg M \rightarrow \rr$$
  with compact support and a family of almost complex structures
  $(J_t)$ satisfying $\|H_t\|_\epsilon < \delta$ and
  $\|J_t-J\|<\delta$ such that:
  \begin{description}
  \item[(i)]  the pair $(\alpha+dH_t,J_t)$ satisfies the
    transversality assumption;
  \item[(ii)]  the canonical projection $\pi$ of $\ctg M$ maps a zero
    $x$ of the one-form $\hnu$ associated to the symplectic isotopy
    spanned by $\alpha + dH_{t}$ into a ball in~$M$ centered in
    $\pi(x(0))$ with radius $c_1 \delta$. 
  \end{description}
\end{lem}

The norm $\| \; \|_\epsilon$ is the usual norm on the Hamiltonians 
used in transversality results (see \cite{MR948771},
\cite{MR1308493}): 
$$\|h\|_\epsilon = \sum_{k=0}^{\infty} \epsilon_k 
\|h\|_{C^k([0,1] \times \ctg M)}$$
where $\epsilon_k > 0$ is a sufficiently rapidly decreasing sequence.

\begin{proof}
The symplectic isotopy $\varphi^\alpha_t$ of $\ctg M$ spanned by the  
symplectic dual $X^\alpha$ of $\alpha$ can be written: 
$$\varphi^\alpha_t(p,q)=(p+t \alpha_{q}, q).$$

Note that the trajectory $\varphi^\alpha_t(p_0,q_0)$ of $X^\alpha$
in $\ctg M$ with initial condition the point $(p_0,q_0)$ lies in the
fiber of $q_0 \in M$.

Note also that $\varphi^\alpha_t$ does not necessarily satisfy the
transversality assumption between $L$ and $\varphi^\alpha_1(L)$ so 
that it may not be possible to use this isotopy for the description of 
$HF(L,u)$.

Nevertheless, by the transversality theorem (Theorem~\ref{thm:transv}) 
and Remark~\ref{rem:transversalite}, for every $\delta > 0$, there
exists a Hamiltonian $H: [0,1] \times \ctg M \rightarrow \rr$ with
compact support and a family of almost complex structures $(J_t)$
satisfying $\|H_t\|_\epsilon < \delta$ and $\|J_t-J\|<\delta$ and such
that the pair $(\alpha+dH_t,J_t)$ satisfies the transversality
assumption.

Let $\zeta(t)$ be the trajectory of $X^{\alpha+dH_t}$ with initial  
condition the point $(p_0,q_0)$ of $\ctg M$. Denote by $q(t)$ its
image in $M$ by the canonical projection $\pi: \ctg M \rightarrow M$
of the cotangent bundle. We have:
$$\frac{d}{dt}q(t) = T_{\zeta(t)} \pi \left(\frac{d}{dt}\zeta(t)
\right) 
= T_{\zeta(t)} \pi \left( X^{\alpha}(\zeta(t))+ X^{dH_t}(\zeta(t))
\right).$$
Since the isotopy $\varphi^\alpha_t$ spanned by $X^\alpha$ satisfies: 
$\pi(\varphi^\alpha_t)(p,q)= q$, 
$$T_{\zeta(t)} \pi \left( X^{\alpha}(\zeta(t))\right) = 0,$$ 
so that:
$$\frac{d}{dt}q_t = T_{\zeta(t)} \pi \left( X^{dH_t}(\zeta(t)) \right).$$
Moreover, for all $z \in \ctg M$, we have:
\begin{eqnarray*}
\|X^{dH_t}(z)\|^2 &=& d_z H_t \left( -J(z) X^{dH_t}(z) \right) \\
&\leq& \|d_z H_t \| \| -J(z) X^{dH_t}(z)\| = \|d_z H_t \| \| X^{dH_t}(z)\|
\end{eqnarray*}
for the norm associated to the scalar product $g_J$.
Hence, 
$$\|X^{dH_t}(z)\| \leq \|d_z H_t \| \leq \frac{\delta}{\epsilon_1}$$
(where $\epsilon_1$ is the first term of the sequence defining the norm 
 $\| \; \|_\epsilon$).

Notice that $\| T_{\zeta(t)} \pi\|$ is bounded on $\ctg M$: it is
bounded on each trivialising open set for the cotangent bundle 
$\ctg M \rightarrow M$; the base space $M$ being compact, it is
bounded on the whole $\ctg M$. 
Thus, there exists a constant $c_1$ (which does not depend on $H$)
such that, 
$$\left \| \frac{d}{dt}q_t \right \| \leq 
\| T_{\zeta(t)} \pi \| \left\| X^{dH_t}(\zeta(t)) \right \|
\leq c_1 \delta,$$
and consequently, for all  $\tau \in [0,1]$, we have:
$$d(q(\tau),q_0) \leq \int^\tau_0 \left \| \frac{d}{dt}q_t \right \|  
\leq \tau c_1 \delta.$$
This means that, on $[0,1]$, the trajectories of $X^{\alpha+dH_t}$ lie
in the fibers of the points of $M$ which belongs to the ball of radius 
$c_1 \delta$ centered in the projection of the initial condition 
(see figure \ref{fig:fig42a}).

\begin{figure}[htbp]
  \begin{center}
   \psfrag{q0}{$q_0$}
   \psfrag{M}{$M$}
   \psfrag{pt}{$\zeta(t)$}
   \psfrag{pti}{$\zeta(0)$}
   \psfrag{C4d}{$c_1 \delta$}
   \includegraphics[height=7cm]{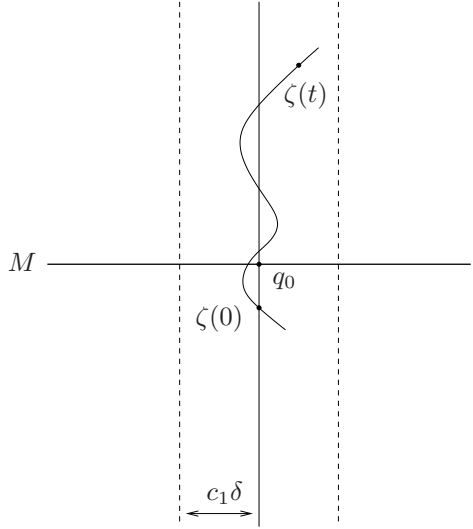}
   \caption{the trajectory $\zeta$} 
   \label{fig:fig42a}
  \end{center}
\end{figure}
This is in particular true for the zeros $x_i$ of the one-form
$\hnu$. 
\end{proof}

Let $\cV_i$ be a neighbourhood of each trajectory $x_i$ such that 
the projection of $\cV_i$ is contained in 
$B(\pi(x_i(0)),c_1 \delta)$. 
Denote $\cV$ the union of the $\cV_i$.\\

\begin{lem}
\label{lem:4.4}
There exists $c_2 > 0$ such that for all $z \in \Omega(L,L)$ whose
image is not contained in $\cV$,
$$\|z'(t) - X^{\alpha+dH_t}(z(t))\|_{L^2} \geq c_2.$$
\end{lem}

\begin{proof}
  The proof is analogous to \cite[Lemma 4.4]{Mihai}. Assume the
  contrary: there exists a sequence $(z_n) \in \Omega(L,L)$ of paths
  whose images are not contained in $\cV$ such that 
  $$\lim_{n \rightarrow +\infty} 
  \|z_n'(t) - X^{\alpha+dH_t}(z_n(t))\|_{L^2} = 0$$
  One has then to prove that this sequence admits a subsequence 
  converging to a zero of $\hnu$. 
  This contradicts the fact that the images of the $z_n$'s are not  
  contained in $\cV$. \end{proof}

We can now choose $\delta > 0$ small enough so that there exists a
closed one-form $\eta \in u$ such that 
$$\eta = 0 \mbox{ on } \bigcup_i B(\pi(x_i(0)),c_1 \delta).$$ 
We also fix $\varepsilon > 0$ such that 
$\varepsilon \|\eta\| < c_2/3$. In particular, the one-form (still
denoted $\eta$) lifted to $\ctg M$ is zero on $\cV$, a property that
we need in the computations (see Proposition~\ref{prop:comp4}, and
also~\cite{Mihai}, \cite{MR1308493}).

Choose a real number $\sigma < \varepsilon$ and consider the isotopy
$\varphi^{\alpha+ \sigma \eta + dH_t}_t$ spanned by 
$X^{\alpha+ \sigma \eta + dH_t}$. The constant $\varepsilon$ is chosen 
small enough so that $\varphi^{\alpha+ \sigma \eta + dH_t}_1(L)$ is
transverse to $L$.

Applying the transversality theorem~\ref{thm:transv} to 
$\alpha+ \sigma \eta +dH_t$, we deduce the existence of a compatible
almost complex structure $J'_t$ such that $\|J'_t-J\|< \delta$ and
such that the pair $(\alpha + \sigma \eta +dH_t, J'_t)$ satisfies the 
transversality assumption.

Since $\Lambda_u=\Lambda_{\tau u}$ for all $\tau > 0$, we can define
the $\Lambda_u$-complexes 
$$C_\star (\bL,\varphi_t^{\alpha+dH_t},J_t) \mbox{ and }
C_\star (\bL,\varphi_t^{\alpha+ \sigma \eta + dH_t},J'_t).$$
Let us prove that the homologies of this complexes are isomorphic. 
This will achieve the proof of Proposition~\ref{prop:42a}.\\

As in Section \ref{sec:invHam}, we define a morphism of complexes
associated to a homotopy between the pairs $(\alpha+dH_t,J_t)$ and
$(\alpha+dH_t+ \sigma \eta, J'_t)$. Let $\chi$ be a monotone
increasing function on $\rr$ that vanishes for $s \leq -R$ and is
equal to $1$ for $s \geq R$. Let $J_{s,t}$ be a homotopy of compatible 
almost complex structures such that $J_{s,t}=J_t$ for $s \leq -R$,
$J_{s,t}=J'_t$ for $s \geq R$ and $\|J_{s,t}-J\|< \delta$. The
homotopy is defined as: 
$$(\alpha+ \chi(s) \sigma \eta + dH_t, J_{t,s}).$$
Consider the space of solutions 
$$v: \rr \times [0,1] \longrightarrow \ctg M$$
of finite energy $E(v)$ (for the norm defined defined by $g_J$) of the
partial differential equation: 
\begin{equation} \label{eq:thm4.1}
  \frac{\partial v}{\partial s}
 + J_{s,t} \left ( \frac{\partial v}{\partial t} 
- X_{s,t}^{\alpha + \chi(s) \sigma \eta + dH_{s,t}}(v) \right ) = 0
\end{equation}
such that $v(s,i) \in L \text{ for } i =0,1$.

These solutions converge to an orbit of $X^{\alpha+dH_t}$
(respectively of $X^{\alpha+ \sigma \eta + dH_t}$) when $s$ goes to 
$- \infty$ (respectively $+ \infty$).

We define as in the previous sections the spaces
$\cM_{\chi,J_{(s,t)}}(x,y)$ of solutions between two orbits $x$ and 
$y$. By transversality, these spaces are manifolds the local dimension 
of which is given by the Maslov class. 

Let $\cM_{g,s}(x,y) \subset \cM_{\chi,J_{s,t}}(x,y)$ be the space of 
solutions that can be lifted to $\ctg \tM$ in paths from $\tx^g$ to 
$\ty$ (for fixed lifts $\tx$ and $\ty$). 
In the following, we prove that the zero-dimensional component of
$\cM_{g,s}(x,y)$ is compact, so that we can define a morphism 
$$\Gamma_\star: C_\star (\bL,\varphi_t^{\alpha+dH_t},J_t) \longrightarrow 
C_\star (\bL,\varphi_t^{\alpha+ \sigma \eta + dH_t},J'_t)$$
by
$$\Gamma_\star(x) = \sum_{g \in \pi_1(M), y} m_g(x,y) g y$$
where $m_g(x,y)$ is the cardinality (modulo $2$) of the space 
$\cM_{g,s}^0(x,y)$.\\

We prove now the compactness of $\cM_{g,s}^0(x,y)$ and we check that  
$$\sum_{g \in \pi_1(M), y} m_g(x,y) g \in \Lambda_u.$$

\begin{prop}
\label{prop:comp4}
For any fixed index $I$, the energy of any element of
$\cM_{g,s}^I(x,y)$ is bounded from above by some positive constant
$A$. 
\end{prop}

\begin{proof}
Let $v_1$ and $v_2$ be two elements of $\cM_{g,s}(x,y)$ with the same
Maslov index~$I$. If $\gamma: \rr \times [0,1] \rightarrow \ctg M$ is
the concatenation of $v_2$ and $\overline{v_1}$, then $\gamma$ is,
after reparametrization in the $s$ variable, a loop $\Omega(L,L)$
based at $x$. 

We have: 
\begin{eqnarray*}
\int \hnu \left( \frac{\partial \gamma}{\partial s} 
\right) \;ds
&=& - \int_{- \infty}^{+\infty} \int_0^1 \left \langle
\frac{\partial v_1}{\partial s}, J \left( \frac{\partial v_1}{\partial t} -
X^{\alpha + dH_t}(v_1) \right) \right \rangle  \;dt \;ds \\
& & + \int_{- \infty}^{+\infty} \int_0^1 \left \langle 
\frac{\partial v_2}{\partial s}, J \left( \frac{\partial v_2}{\partial t} - 
X^{\alpha + dH_t}(v_2) \right) \right \rangle  \;dt \;ds.
\end{eqnarray*}

We prove that:
\begin{description}
\item[1)] for any $s \in \rr$, 
  \begin{equation}
    \label{eq:1)}
    - \int_0^1 \left \langle
\frac{\partial v_1}{\partial s},
J \left( \frac{\partial v_1}{\partial t} - 
X^{\alpha + dH_t}(v_1) \right) \right \rangle \; dt \geq 
\frac{1}{3} \left \| \frac{\partial v}{\partial s} \right \|_{L^2}^2 \,;
  \end{equation}
\item[2)] for any $s \in \rr$,
  \begin{equation}
    \label{eq:2)}
    - \int_0^1 \left \langle
\frac{\partial v_2}{\partial s}, J \left( \frac{\partial v_2}{\partial t} - 
X^{\alpha + dH_t}(v_2) \right) \right \rangle \; dt
\leq \frac{5}{3} \left \| \frac{\partial v_2}{\partial s} 
\right\|_{L^2}^2.
  \end{equation}
\end{description}

Using 1) and 2), we will deduce that
\begin{equation}
  \label{eq:3)}
  \frac{1}{3} E(v_1) \leq  u(\gamma_0) + \frac{5}{3} E(v_2)
\end{equation}
and use this inequality to achieve the proof of 
Proposition~\ref{prop:comp4}.\\

\noindent
Proof of 1): If $v$ is an element of $\cM_{g,s}(x,y)$, we have:
\begin{align}
\int_0^1 \left \langle
\frac{\partial v}{\partial s}, \right.& \left. J \left( \frac{\partial v}{\partial t} - 
X^{\alpha + dH_t}(v) \right) \right \rangle \; dt = \nonumber \\
&=  \int_0^1 \left \langle
\frac{\partial v}{\partial s}, J \left( J_{s,t} \frac{\partial v}{\partial s} + X^{\alpha + \chi(s) \sigma \eta + dH_t}(v) -
X^{\alpha + dH_t}(v) \right) \right \rangle \; dt \nonumber  \\ 
&=  \int_0^1 \left \langle \frac{\partial v}{\partial s}, 
J  J_{s,t} \frac{\partial v}{\partial s} \right \rangle \; dt 
+ \int_0^1 \left \langle \frac{\partial v}{\partial s},  
J X^{\chi(s) \sigma \eta}(v) \right \rangle \; dt \nonumber \\
&= - \int_0^1 \left \langle J \frac{\partial v}{\partial s}, 
J_{s,t} \frac{\partial v}{\partial s} \right \rangle \; dt 
- \int_0^1 \omega_M \left ( \frac{\partial v}{\partial s},  
X^{\chi(s) \sigma \eta}(v)  \right) \; dt \nonumber \\
&= - \int_0^1 \left \| \frac{\partial v}{\partial s}\right \|^2 \; dt 
- \int_0^1 \left \langle J \frac{\partial v}{\partial s}, 
(J_{s,t}-J) \frac{\partial v}{\partial s} \right \rangle \; dt \nonumber \\
& \; \; \; \; - \int_0^1 \chi(s) \sigma \eta \left( \frac{\partial v}{\partial s}\right)
 \; dt. \label{eq:v}
\end{align}

\begin{itemize}
\item Assume firstly that $v(s,\cdot)$ takes values in $\cV$. Since
  $\eta$ vanishes on this neighbourhood of the zeros, we have:
\begin{align*}
\int_0^1 \left \langle \frac{\partial v}{\partial s}, \right. &
\left.  J \left( \frac{\partial v}{\partial t} - X^{\alpha + dH_t}(v) \right) 
\right \rangle \; dt = \\
&= - \int_0^1 \left \| \frac{\partial v}{\partial s} \right \|^2 \; dt 
+ \int_0^1 \left \langle J \frac{\partial v}{\partial s}, 
(J - J_{s,t}) \frac{\partial v}{\partial s} \right \rangle \; dt  \\
&\leq - \int_0^1 \left \| \frac{\partial v}{\partial s} \right \|^2 \; dt 
+ \delta \int_0^1 \left \| \frac{\partial v}{\partial s} \right \|^2 \; dt\\
&\leq - (1- \delta) \int_0^1 \left \| \frac{\partial v}{\partial s} \right \|^2 \; dt. 
\end{align*}
We can assume that $\delta < 2/3$, so that:
$$\int_0^1 \left \langle \frac{\partial v}{\partial s},
 J \left( \frac{\partial v}{\partial t} - X^{\alpha + dH_t}(v) \right) 
\right \rangle \; dt 
\leq - \frac{1}{3} \int_0^1 \left \| \frac{\partial v}{\partial s} \right \|^2 \; dt.$$

\item If $v(s,\cdot)$ does not take its values in $\cV$, by the same 
argument as in~\cite{Mihai} and using Lemma~\ref{lem:4.4}, we also 
have in this case that:  
$$\int_0^1 \left \langle \frac{\partial v}{\partial s},
 J \left( \frac{\partial v}{\partial t} - X^{\alpha + dH_t}(v) \right) 
\right \rangle \; dt 
\leq - \frac{1}{3} \left \| \frac{\partial v}{\partial s} \right \|_{L^2}^2,$$
if we choose $\delta$ small enough.
\end{itemize}

\noindent
Applying this to $v_1$ we get \ref{eq:1)} and integrating in the $s$
variable: 
$$ \frac{1}{3} E(v_1)
\leq - \int_{- \infty}^{+\infty} \int_0^1 \left \langle
\frac{\partial v_1}{\partial s}, J \left( \frac{\partial v_1}{\partial t} - 
X^{\alpha + dH_t}(v_1) \right) \right \rangle  \;dt \;ds.$$
\ \\

\noindent
Proof of 2): We use the relation (\ref{eq:v}):
\begin{align*}
- \int_0^1 \left \langle
\frac{\partial v_2}{\partial s}\right.&  \left. , J \left( \frac{\partial v_2}
{\partial t} - X^{\alpha + dH_t}(v_2) \right) \right \rangle \; dt =\\
&= \int_0^1 \left \| \frac{\partial v_2}{\partial s} \right \|^2 \; dt 
+ \int_0^1 \left \langle J \frac{\partial v_2}{\partial s}, 
(J - J_{s,t}) \frac{\partial v_2}{\partial s} \right \rangle \; dt \\
& \; \; \; \; + \int_0^1 \chi(s) \sigma (-\eta) \left(\frac{\partial v_2}{\partial s}\right) \; dt.
\end{align*}

\noindent
We distinguish again the two cases:
\begin{itemize}
\item either $v_2(s, \cdot)$ takes its values in $\cV$, so that we
  have: 
$$- \int_0^1 \left \langle
\frac{\partial v_2}{\partial s}, J \left( \frac{\partial v_2}{\partial t} - 
X^{\alpha + dH_t}(v_2) \right) \right \rangle \; dt
\leq (1+ \delta) \left \| \frac{\partial v_2}{\partial s} \right \|_{L^2}^2,$$

\item or $v_2(s, \cdot)$ does not take all its values in $\cV$ and we
  use Lemma~\ref{lem:4.4}. Firstly, we have:
$$ - \int_0^1 \left \langle
\frac{\partial v_2}{\partial s}, J \left( \frac{\partial v_2}{\partial t} - 
X^{\alpha + dH_t}(v_2) \right) \right \rangle \; dt
\leq (1+ \delta) \left \| \frac{\partial v_2}{\partial s} \right\|_{L^2}^2 + 
\frac{c_2}{3} \left \| \frac{\partial v_2}{\partial s} \right\|_{L^2}.$$
By a consequence of Lemma \ref{lem:4.4} (see \cite{Mihai}):
$$\frac{c_2}{3} \left \| \frac{\partial v_2}{\partial s} \right\|_{L^2}
 \leq \left( \frac{2}{3} - \delta \right) 
\left \| \frac{\partial v_2}{\partial s} \right\|_{L^2}^2.$$
\end{itemize}

\noindent
In both cases, we have (\ref{eq:2)}) and by integration: 
$$-  \int_{- \infty}^{+\infty} \int_0^1 \left \langle
\frac{\partial v_2}{\partial s}, J \left( \frac{\partial v_2}{\partial t} - 
X^{\alpha + dH_t}(v_2) \right) \right \rangle  \;dt \;ds
\leq \frac{5}{3} E(v_2).$$

To prove (\ref{eq:3)}), we use that  
$$ \int_{- \infty}^{+\infty} \hnu \left( \frac{\partial \gamma}
{\partial \sigma} \right)$$
is the value of the one-form $\hnu$ on the loop $\gamma$ based 
in $x$. Thanks to (\ref{eq:alt2}), this term is equal to $u(\gamma_0)$ 
if the two solutions have the same Maslov class.\\

By assumption, $v_1$ and $v_2$ can be lifted to paths between $\tx^g$ 
and $\ty$, the loop $\gamma_0$ is homotopic to the constant loop 
based in $x$. 

We thus have, fixing an element $v_0$ in $\cM^I_{g,s}(x,y)$, for all 
$v$ of $\cM^I_{g,s}(x,y)$, 
$$E(v) \leq 5 E(v_0).$$
To end the proof of Proposition~\ref{prop:comp4}, 
we choose $A= 5  E(v_0)$. \end{proof}

Looking at the proof of Proposition~\ref{prop:comp4}, 
we see that the sum  
$$\sum_{g \in \pi_1(M)} m_g(x,y) g \in \Lambda_u.$$
Indeed, if $v_1$ can be now lifted as a path from $\tx^{g_1}$ to $\ty$
and $v_2$ as a path from $\tx^{g_2}$ to $\ty$, we have 
$u(\gamma_0)= u(g_1)-u(g_2)$, so that (\ref{eq:3)}) becomes: 
$$\frac{1}{3} E(v_1) \leq u(g_1)-u(g_2) + \frac{5}{3} E(v_2).$$
This implies that if $v_0$ is a fixed element of $\cM_{g_0,s}(x,y)$, 
we have then for all $v$ of $\cM_{g,s}(x,y)$ with $u(g) < C$,
$$E(v) \leq 3(C - u(g_0))+ 5 E(v_0)$$
and this prove that 
$$ \bigcup_{u(g)<C} \cM^0_{g,s}(x,y) \subset \cM_{\chi,J_{s,t}}(x,y;A)$$
for the positive constant $A = 3(C - u(g_0))+ 5 E(v_0))$.\\

The map $\Gamma_\star$ is a morphism of complexes. This is a
consequence of the compactification of the one-dimensional component
of $\cM_{g,s}(x,y)$ by broken trajectories $(v^1,v^2)$, where
\begin{itemize}
\item one of the $v^i$'s satisfies (\ref{eq:thm4.1});
\item the other is solution of the Floer equation corresponding to
  $(\alpha+dH_t,J_t)$ or $(\alpha+dH_t+ \sigma \eta, J'_t)$.
\end{itemize}

As in Theorem~\ref{thm:invar-hamilt}, we use the usual methods of 
Floer theory to prove that the morphism of complexes $\Gamma_\star$ 
that induces an isomorphism in homology. \end{proof}

Now, we prove that, with an additional assumption on the Maslov number of
$L$, for $\sigma$ small enough, $FH(\bL,\sigma u)$ is the Novikov
homology of $L$ associated to $f^\star u$ and the covering 
$\bL \rightarrow L$.

\begin{prop}
\label{prop:42b}
Let $u$ be an element of $H^1(M)$.\\
Assume that the Lagrangian submanifold $L$ is monotone on the loops 
and that its Maslov number $N_L$ satisfies 
$$N_L \geq \dim(M)+2.$$
Then there exists $\varepsilon > 0$ (depending on $u$) such that 
for all real number $\sigma$ satisfying $| \sigma | < \varepsilon$,
$$FH(\bL, \sigma u) \simeq H(\bL,f^\star u).$$
\end{prop}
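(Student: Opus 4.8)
The plan is to prove the statement by directly computing $FH(\bL,\sigma u)$, for $\sigma$ small, from a symplectic isotopy whose flux is $\sigma u$ and which is generated by a small one-form. Concretely, take a closed one-form $\alpha$ on $M$ in the class $u$ and a small transverse Hamiltonian $H$, and use the isotopy generated by $\sigma\alpha+dH_t$; its flux is $\sigma u$, and for $\sigma$ small $\varphi_1(L)$ is a $C^1$-small perturbation of $L$ inside $\ctg M$. The claim is then the Novikov-twisted analogue of the fact that the Floer homology of a monotone Lagrangian with large Maslov number reduces to its classical homology: I would show that, for $\sigma$ below some $\varepsilon(u)$, the Floer--Novikov complex is isomorphic, as a complex of $\Lambda_u$-modules, to a Morse--Novikov complex of $L$ associated with $f^\star u$ and the covering $\bL$. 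The two features special to our setting, treated as in Proposition~\ref{prop:comp4}, are the absence of a primitive of $\hnu$ (replaced everywhere by a priori energy estimates) and the bubbling of $J$-holomorphic disks (excluded here by $N_L\geq\dim(M)+2$).

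First I would identify the generators. In a Weinstein neighbourhood of $L$ in $\ctg M$, the perturbed Lagrangian $\varphi_1(L)$ is, for $\sigma$ small, the graph of a one-form $\theta$ on $L$ whose cohomology class is the restriction of the flux to $L$, namely $i^\star(\sigma u)=\sigma f^\star u$ (using $\pi\circ i=f$). Choosing the small Hamiltonian $H$ generically makes $\theta$ a Morse one-form in the class $\sigma f^\star u$, and its zeros are exactly the intersection points $L\cap\varphi_1(L)$, that is, the generators of the Floer complex. These are precisely the generators of the Morse--Novikov complex computing $H(\bL,f^\star u)$; the fixed lifts in $\ctg\tM$ and the bookkeeping by $g\in\pi_1(M)$ turn this into an identification of $\Lambda_u$-modules over the covering $\bL$, which is an integration covering for $f^\star u$ (cf. Remark~\ref{rem:revbar}).

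Next I would match the differentials. For $\sigma$ small the Floer strips of index $1$ are in bijection with the gradient trajectories of $\theta$, by the standard identification of the Floer complex of a $C^1$-small perturbation with the corresponding Morse complex, carried out here in its Novikov form; tracking the class $g\in\pi_1(M)$ of each strip converts the Floer incidence numbers $n_g(x,y)$ into the Novikov incidence numbers on $\bL$. The compactness underlying this bijection is provided, in the absence of a primitive of $\hnu$, by a priori bounds on the energy of strips of fixed index of the kind established in Proposition~\ref{prop:comp4}, together with the monotonicity on the loops.

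The step I expect to be the main obstacle is excluding bubbling from this identification. In contrast with Lemma~\ref{lem:dcarre}, where $N_L\geq 2$ sufficed because only $\partial\circ\partial=0$ was needed, here one must ensure that no $J$-holomorphic disk breaks off in the moduli spaces of index $0$ and $1$ entering the chain-level comparison. A bubbled disk carries Maslov number at least $N_L\geq\dim(M)+2=\dim(L)+2$, so by monotonicity on the disks it would leave a strip of strictly negative dimension; this dimension count, which fails precisely for smaller $N_L$, is what places us in the regime where Floer and Novikov homology coincide. Once bubbling is ruled out and the energy estimates give the required compactness, the comparison map is an isomorphism of $\Lambda_u$-complexes, yielding $FH(\bL,\sigma u)\simeq H(\bL,f^\star u)$.
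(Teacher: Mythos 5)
Your strategy breaks down at the step where you match the differentials, and the failure is specific to the non-exact setting. By monotonicity on the loops (exactly as in Lemma~\ref{lem:L0fini}), two index-$1$ strips $v_1,v_2$ from $x$ to $y$ lifting to classes $g_1,g_2$ satisfy $E(v_1)-E(v_2)=u(g_2)-u(g_1)$; so the strips counted by the Floer--Novikov differential of your isotopy $\sigma\alpha+dH_t$ sit in infinitely many homotopy classes with energies tending to $+\infty$ (this is unavoidable: the Morse--Novikov differential of $\theta$ that you want to recover genuinely has infinitely many terms in general). But every tool you invoke for the identification is an energy argument valid only for strips of a priori \emph{small} energy: the standard ``$C^1$-small perturbation $\Rightarrow$ Floer strips are gradient lines'' bijection, and Oh's confinement of trajectories to the Weinstein/Darboux neighbourhood under $N_L\geq\dim(M)+2$. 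In the Hamiltonian case these suffice because the flux term $u(\gamma_0)$ vanishes, so \emph{all} index-$1$ strips between a given pair of points have the same small energy; in your setting nothing prevents an index-$1$ strip in a class with $-u(g)$ large from leaving the neighbourhood and failing to be a gradient line. Your dimension count only excludes disk bubbling, which is a separate (and secondary) issue, and the estimates of Proposition~\ref{prop:comp4} do not help: they bound the energy within a fixed class $g$, or on $\bigcup_{u(g)<C}$, never uniformly over all $g$.

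This is precisely why the paper never compares a non-exact Floer complex directly with a Morse--Novikov complex. Instead it (i) replaces $\alpha$ by an exact form $dg$ and uses the continuation morphism $\Gamma^0_\star$ (the proof of Proposition~\ref{prop:42a} with this substitution, resting on Proposition~\ref{prop:comp4}) to identify $FH(\bL,\sigma u)$, for $|\sigma|<\varepsilon$, with the homology of $\Lambda_u\otimes_\Lambda C_\star(\bL,\varphi_t^{dg+dH_t},J_t)$, i.e.\ a \emph{Hamiltonian} Floer complex whose coefficients are merely extended to $\Lambda_u$; (ii) computes that Hamiltonian complex via Oh's local Floer theory \cite{MR1389956}, taking $\cH=h\circ\pi_L$ with $h$ a $C^2$-small Morse function, where $N_L\geq\dim(M)+2$ guarantees that all global trajectories stay in the Darboux neighbourhood, so the complex equals the Morse complex of $h$ lifted to the covering $\bL$; and (iii) notes that $\Lambda_u\otimes_\Lambda C_\star(\bL\rightarrow L,h,\xi)$ computes $H(\bL,f^\star u)$ by definition (cf.\ Remark~\ref{rem:revbar}). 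All the Novikov content is thus pushed into the algebraic coefficient extension, and the geometric Floer-to-Morse comparison is performed only where energies are uniformly small. To repair your argument you would essentially have to carry out this reduction to the Hamiltonian case first.
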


\begin{proof}
We begin (as in \cite{Mihai}) by substituting in the proof of 
Proposition~\ref{prop:42a} the one-form  $\alpha \in u$ by an 
exact one-form $dg$ where $g:M \rightarrow \rr$. 
For $\sigma$ small enough, we obtain a morphism of
$\Lambda_u$-complexes: 
$$\Gamma^0_\star: C_\star(\bL,\varphi_t^{dg+dH_t},J_t) \longrightarrow 
C_\star(\bL,\varphi_t^{dg+\sigma \eta + dH_t},J'_t) $$
which induces an isomorphism in homology. The first complex is a 
$\Lambda$-complex whose coefficients have been extended to
$\Lambda_u$. There is a natural isomorphism 
$$C_\star(\bL,\varphi_t^{dg+dH_t},J_t) \simeq  
\Lambda_u \otimes_\Lambda  C_\star(\bL,\varphi_t^{dg+dH_t},J_t).$$

By Hamiltonian invariance, we know that the homotopy type of the
complex $C_\star(\bL,\varphi_t^{dg+dH_t},J_t)$ does not depend on a
regular choice of the pair $(H^0_t,J^0_t)$. 

In order to define an isomorphism between $FH(\bL, \sigma u)$ and 
$H_\star (\bL,u)$, we use an other choice of Hamiltonian. For that
purpose, in the monotone case, we need to adapt a construction of Oh
(\cite{MR1389956}).
We consider a local Floer homology, namely the Floer homology in a
Darboux neighbourhood $\cU$ of $L$ in~$\ctg M$. Considering $\cU$ as
the neighbourhood of the zero section in $\ctg L$, we can define the
Hamiltonian $\cH = h \circ \pi_L$ on $\cU$, where $h$ is a Morse
function on $L$ and $\pi_L: \ctg L \rightarrow L$ is the canonical
projection of the cotangent.

If $h$ is small enough (in the $C^2$-topology) and if its gradient for
a metric on $L$ is Morse-Smale, the local Floer complex is spanned by
the intersection points of $L$ and its displacement $L+dh$ and we have
a bijection between the $J$-holomorphic strips of the Floer homology
and the trajectories of the gradient of $h$ which define the Morse
differential. 

To go back to $\ctg M$, it is then enough to extend the Hamiltonian
$\cH$ to $\ctg M$ (setting $\cH = 0$ outside a neighbourhood containing
$\cU$).  
Oh has proved that under the assumption that $L$ is monotone (on the
disks) in $\ctg M$ and $N_L \geq \dim(M) + 2$, a Floer trajectory in
$\ctg M$ stays in the Darboux neighbourhood and hence the trajectories
that define the ``global'' Floer differential are those that were
already counted in the local differential. 

The end of the proof is similar to \cite{Mihai}. 
The Novikov ring which defines the Novikov homology associated to
$f^\star u$ and the covering $\bL \rightarrow L$ is $\Lambda_u$, so
that the Morse complex above is exactly 
$C_\star(\bL \rightarrow L, h, \xi)$. 
The $\Lambda_u$-complexes
$$\Lambda_u \otimes_\Lambda  C_\star(L,\varphi_t^{dg+dH_t},J_t) 
\mbox{ and } \Lambda_u \otimes_\Lambda C_\star(\bL \rightarrow L,h,\xi)$$
are homotopy equivalent, so that the homologies 
$$FH(\bL, \sigma u) \simeq H(\bL,u)$$
are isomorphic.  \end{proof}

Propositions \ref{prop:42a} and \ref{prop:42b} imply that the set 
$$\{ \sigma \in ]0;+\infty[ \;|\; FH(\bL,u) \simeq H_{\star}(\bL,f^\star u) \}$$
is nonempty, open and closed, hence equal to $]0;+\infty[$ so that we
have proved the following theorem: 

\begin{thm}
\label{thm:4.1}
Let $u$ be an element of $H^1(M)$.\\
Assume that the Lagrangian submanifold $L$ is monotone on the loops 
and its Maslov number $N_L$ satisfies 
$$N_L \geq \dim(M)+2.$$
Then the Floer homology $FH(\bL,u)$ is isomorphic to the Novikov homology 
$H(\bL,f^\star u)$.
\end{thm}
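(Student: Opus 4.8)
The plan is to deduce the theorem from Propositions~\ref{prop:42a} and~\ref{prop:42b} by a standard clopen argument on the half-line $]0;+\infty[$, all of the analytic work having already been carried out in those two propositions. I would introduce the set
$$S = \{ \sigma \in \,]0;+\infty[ \;|\; FH(\bL,\sigma u) \simeq H(\bL,f^\star u) \}$$
and prove that it is nonempty, open and closed.

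First I would record that the Novikov homology on the right is genuinely independent of $\sigma$: for $\sigma > 0$ one has $f^\star(\sigma u) = \sigma f^\star u$, and the Novikov ring depends only on the ray through a class, so $\Lambda_{\sigma f^\star u} = \Lambda_{f^\star u}$ and hence $H(\bL,f^\star(\sigma u)) = H(\bL,f^\star u)$. Likewise $\Lambda_{\sigma u} = \Lambda_u$, so every $FH(\bL,\sigma u)$ is defined over one fixed coefficient ring. The nonemptiness of $S$ is then immediate from Proposition~\ref{prop:42b} (which applies since $N_L \geq \dim(M)+2$): it furnishes an $\varepsilon > 0$ with $FH(\bL,\sigma u) \simeq H(\bL,f^\star u)$ for $0 < \sigma < \varepsilon$, so all such small $\sigma$ lie in $S$.

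The heart of the argument is openness of both $S$ and its complement, and here I would invoke Proposition~\ref{prop:42a} not at $u$ but at the rescaled class $\sigma_0 u$, for an arbitrary $\sigma_0 > 0$. This yields some $\varepsilon' > 0$ (depending on $\sigma_0$) with $FH(\bL,(1+\tau)\sigma_0 u) \simeq FH(\bL,\sigma_0 u)$ whenever $|\tau| < \varepsilon'$; equivalently, $\sigma \mapsto FH(\bL,\sigma u)$ is constant on the open interval $((1-\varepsilon')\sigma_0, (1+\varepsilon')\sigma_0)$, which is a neighbourhood of $\sigma_0$ in $]0;+\infty[$. Because the right-hand side $H(\bL,f^\star u)$ does not move with $\sigma$, this interval is contained in $S$ if $\sigma_0 \in S$ and is disjoint from $S$ if $\sigma_0 \notin S$; thus $S$ and its complement are both open. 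As $]0;+\infty[$ is connected and $S$ is nonempty, $S = \,]0;+\infty[$, and taking $\sigma = 1$ gives exactly $FH(\bL,u) \simeq H(\bL,f^\star u)$.

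The step demanding the most care is this double openness claim: one must make sure that the multiplicative neighbourhood supplied by Proposition~\ref{prop:42a} around $\sigma_0 u$ really produces an honest open interval around $\sigma_0$ in the scalar parameter, and that the $\sigma$-independence of the Novikov side is used symmetrically so that the complement of $S$ is open as well. Once these are in hand the conclusion is a purely topological clopen argument on a connected interval.
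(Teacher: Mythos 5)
Your proof is correct and is essentially the paper's own argument: the paper likewise deduces Theorem~\ref{thm:4.1} by observing that Propositions~\ref{prop:42a} and~\ref{prop:42b} make the set $\{\sigma \in \,]0;+\infty[\;|\; FH(\bL,\sigma u) \simeq H(\bL,f^\star u)\}$ nonempty, open and closed, hence all of $]0;+\infty[$. Your write-up simply makes explicit the details the paper leaves implicit (applying Proposition~\ref{prop:42a} at the rescaled class $\sigma_0 u$, and the $\sigma$-independence of both coefficient rings), which is exactly how the paper's one-line argument is meant to be read.
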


If we only assume that $N_L \geq 2$, we do not necessarily have an
isomorphism between the Floer homology and the Novikov homology of 
$L$. This is also the case in usual Floer theory, but we have the  
spectral sequence described by Biran in \cite{Birannew} to relate it
to the singular homology of the Lagrangian submanifold. We can also
define in the monotone case a spectral sequence whose first page is
the Novikov homology of $L$ and that converges to the Floer-Novikov
homology. The following theorem  gives a precise description of this
spectral sequence. 

Let $\bA$ be the $\Lambda_u$-module $\bA = \Lambda_u[T,T^{-1}]$ of Laurent
polynomials with coefficients in $\Lambda_u$. Let 
$\tau_i :\bA \rightarrow \bA$ be the multiplication by~$T^i$. 
We define the degree of $T$ to be $N_L$. Then 
$$\bA = \bigoplus_{i \in \zz} \bA^i,$$
where $\bA^i = \Lambda_u \, T^{i/N_L}$ if $i \equiv 0 \mod N_L$ and  
$\bA^i = \{0\}$ otherwise.

\begin{thm}
\label{thm:suitespec} 
  There exists a spectral sequence $\{E_{r}^{p,q},d_{r}\}$ satisfying 
the following properties:
\begin{description}
\item[(1)] $E_{0}^{p,q} = C_{p+q-pN_L}(\bL,f^\star u) \otimes \bA^{pN_{L}}$ and 
$d_0=\partial_0 \otimes 1$;
\item[(2)] $E_{1}^{p,q} = H_{p+q-pN_L}(\bL,f^\star u) \otimes \bA^{pN_{L}}$ and  
$d_1=[\partial_1] \otimes \tau$ where 
$$[\partial_1]: H_{p+q-pN_L}(\bL,f^\star u) \longrightarrow H_{p+1+q-(p+1)N_L}(\bL,f^\star u)$$
is induced by $\partial_1$;
\item[(3)] For all $r \geq 1$, $E_{r}^{p,q}$ can be written  
  $E_{r}^{p,q}=V_{r}^{p,q} \otimes \bA^{pN_{L}}$ with 
  $d_{r}=\delta_{r} \otimes \tau_{r}$, 
  $V_{r}^{p,q}$ are modules on $\Lambda_u$, 
  $\delta_{r}: V_{r}^{p,q} \rightarrow V_{r}^{p+r,q-r+1}$ are
  morphisms and satisfy $\delta_{r} \circ \delta_{r} = 0$. Moreover,   
  $$V_{r+1}^{p,q}= \frac{\ker (\delta_{r}: V_{r}^{p,q} \rightarrow
    V_{r}^{p+r,q-r+1})}{\im (\delta_{r}: V_{r}^{p-r,q+r-1} \rightarrow
    V_{r}^{p,q})};$$
\item[(4)] $\{E_{r}^{p,q},d_{r}\}$ collapses at page $\kappa + 1$, 
  where $\kappa = [\frac{\dim(L)+1}{N_L}]$ and the spectral sequence  
  converges to $FH(\bL,u)$, i.e.  
$$\bigoplus_{p+q=\ell} E_{\infty}^{p,q} \cong FH^{\ell(\modulo N_{L})}(\bL,u).$$
\item[(5)]  For all $p \in \zz$, 
$\displaystyle \bigoplus_{q \in \zz} E_{\infty}^{p,q} \cong FH(\bL,u).$
\end{description}
\end{thm}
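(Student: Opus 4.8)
The plan is to exhibit $FH(\bL,u)$ as the homology of an explicitly filtered $\zz$-graded complex over $\bA$ and then read the pages off the associated spectral sequence. First I would realize $FH(\bL,u)$ on the complex $\mathcal{C}=\bigl(C_\star(\bL,f^\star u)\otimes_{\Lambda_u}\bA,\,D\bigr)$, the Morse-Novikov complex of Section~\ref{sec:Nov} extended over $\bA=\Lambda_u[T,T^{-1}]$ (degree of $T$ equal to $N_L$) and equipped with the full Floer differential $D$. Exactly as in the proof of Proposition~\ref{prop:42b}, Oh's local Floer homology in a Darboux neighbourhood of $L$ identifies the Floer generators with the critical points of a Morse function on $L$ and the lowest-order part $\partial_0$ of $D$ with the Morse-Novikov differential; the higher-order parts $\partial_\nu$ ($\nu\ge 1$), which vanish when $N_L\ge\dim(M)+2$, now survive and record the disk-bubbling contributions. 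Using Proposition~\ref{prop:compa} --- no sphere bubbling occurs in a cotangent bundle and every disk bubble carries Maslov index a positive multiple of $N_L$ --- the differential takes the form $D=\sum_{\nu\ge 0}\partial_\nu\otimes\tau_\nu$, where $\tau_\nu$ is multiplication by $T^\nu$ and $\partial_\nu$ collects the trajectories whose Maslov-Viterbo index exceeds the geometric index drop by $\nu N_L$, as seen in the cusp-curve compactification. The decisive positivity point is that monotonicity forbids $\nu<0$, so $D$ strictly respects the filtration by powers of $T$.

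Next I would filter $\mathcal{C}$ by the power $p$ of $T$ and form the spectral sequence of the filtered complex. Since $D$ is $\bA$-linear and $T$ is invertible, every page is an $\bA$-module on which the differentials commute with multiplication by $T$; this is precisely what forces each page to factor as $E_r^{p,q}=V_r^{p,q}\otimes\bA^{pN_L}$ with $\Lambda_u$-modules $V_r^{p,q}$ and $d_r=\delta_r\otimes\tau_r$, yielding property~(3). The associated graded differential is $d_0=\partial_0\otimes 1$, the Morse-Novikov differential on $C_{p+q-pN_L}(\bL,f^\star u)\otimes\bA^{pN_L}$, which is property~(1); taking homology in the $q$-direction gives $E_1^{p,q}=H_{p+q-pN_L}(\bL,f^\star u)\otimes\bA^{pN_L}$, while the next term $\partial_1$ of $D$ descends to the induced map $[\partial_1]$ on Novikov homology with $d_1=[\partial_1]\otimes\tau$, which is property~(2).

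For the collapse and convergence in~(4) and~(5) I would track the Novikov degree $m=p+q-pN_L$. Writing $d_r\colon E_r^{p,q}\to E_r^{p+r,q-r+1}$, the target Novikov degree is
$$m'=(p+r)+(q-r+1)-(p+r)N_L=m+1-rN_L,$$
so $d_r$ lowers the Novikov degree by $rN_L-1$. As $H_\star(\bL,f^\star u)$ is supported in degrees $0\le m\le\dim(L)$, a nonzero $d_r$ forces $rN_L-1\le\dim(L)$, i.e. $r\le(\dim(L)+1)/N_L$ and hence $r\le\kappa$; thus $d_r=0$ for $r\ge\kappa+1$ and the sequence collapses at page $\kappa+1$. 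Because the Novikov grading is bounded, the filtration is finite in each total degree, so convergence to $FH(\bL,u)$ is automatic; collecting the terms with $p+q=\ell$ recovers $FH^{\ell(\modulo N_L)}(\bL,u)$, which is the convergence statement of~(4). Finally~(5) is formal from the invertibility of $T$: the complex $\mathcal{C}$ is $T$-periodic, so each fixed slice $\bigoplus_q E_\infty^{p,q}$ already reproduces all of $FH(\bL,u)$.

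The hard part will be the first step: rigorously establishing $D=\sum_{\nu\ge 0}\partial_\nu\otimes\tau_\nu$ with $\partial_0$ the Morse-Novikov differential and $\mathcal{C}$ computing $FH(\bL,u)$. This demands the full compactness and transversality analysis behind Proposition~\ref{prop:compa} to control disk bubbling in the non-exact Novikov setting, together with the adaptation of Oh's local-to-global comparison used in Proposition~\ref{prop:42b} to pin down the lowest-order term and to identify the Morse model with the genuine Floer complex up to homotopy, and a verification that the $\pi_1$-indexed coefficient sums remain in $\Lambda_u$ so that the filtration is exhaustive. Once the filtered complex is in place, the remaining algebra --- the page identifications, the factorization through $\bA$, the degeneration bound and the convergence --- is formal.
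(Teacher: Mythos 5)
Your proposal is correct and takes essentially the same approach as the paper: the paper likewise uses Oh's local Floer theory from the proof of Proposition~\ref{prop:42b} to identify the Floer generators with the Morse--Novikov complex and decompose the differential as $\partial=\partial_0+\cdots+\partial_\kappa$ (with $\partial_0$ the Morse--Novikov differential and $\partial_j$ shifting Novikov degree by $1-jN_L$), and then obtains the spectral sequence by running Biran's argument with $\zz/2$ coefficients replaced by $\Lambda_u$. What you wrote is precisely an unpacking of that delegated step --- the filtration by powers of $T$, the factorization $E_r^{p,q}=V_r^{p,q}\otimes\bA^{pN_L}$, and the degree-counting collapse at page $\kappa+1$ --- so there is no substantive difference in route.
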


\begin{proof}
In order to describe the spectral sequence, we look at the proof of 
Proposition~\ref{prop:42b} and the Hamiltonian $\cH$ defined with the 
$C^2$-small function~$h$. We suppose, as in \cite{Birannew}, that $h$ 
has exactly one relative minimum $x_0$ and we use $x_0$ as base  
point for the Floer complex so that we can fix the grading
by~$\zz/N_L$. 
As we have the decomposition (see \cite{MR1389956}):
$$C_{i(\modulo N_L)}(\bL,\varphi_t^{H^0_t},J^0_t) = 
\bigoplus_{j \equiv i(\modulo N_L)} C_j(\bL,f^\star u).$$
we can decompose the differential  
$$\partial :C_{\star \modulo N_L}(\bL,\varphi_t^{H^0_t},J^0_t) \longrightarrow 
C_{\star +1 \modulo N_L}(\bL,\varphi_t^{H^0_t},J^0_t)$$ 
in 
$\displaystyle \partial = \sum_{j \in \zz} \partial_j$ with 
$$\partial_j:  C_\star (\bL,f^\star u) \rightarrow C_{\star+1-j N_L} (\bL,f^\star u).$$
Moreover, by the index computations of \cite{MR1389956}, 
$$\partial_j=0 \mbox{ if } j<0 \mbox { or } j> \kappa = \left [
  {\frac{\dim L +1}{N_L} } \right],$$ 
so that  
$$\partial = \partial_0 + \cdots + \partial_\kappa.$$
The differential $\partial_0$ counts the trajectories that stay in 
the neighbourhood $\cU$, it corresponds to the differential of the 
local Floer homology and as before, the homology of the complex 
$(C_\star(\bL,f^\star u),\partial_0)$ can be identified to the Novikov
homology $H(\bL,f^\star u)$. 
The other operators $\partial_1, \ldots, \partial_\kappa$ count the 
trajectories that leave the neighbourhood $\cU$.

In order to define and prove the properties of the spectral sequence,
it suffices to substitute the coefficients in $\zz/2$ in the proof of
\cite[Theorem 5.2]{Birannew} by coefficients in $\Lambda_u$.
\end{proof}

\begin{rem}
  Note that if $N_L \geq \dim(M)+2$, the spectral sequence collapses at 
  page $\kappa+1=1$ and we recover Theorem~\ref{thm:4.1}.
\end{rem}

\subsection{Proof of Theorems~\ref{thm:suitespecintro} and~\ref{thm:princ}}
\label{sec:preuve1.4et1.5}

\noindent
\textit{\textbf{Proof of Theorem~\ref{thm:suitespecintro}.}}
As in \cite[Section 4.1]{Mihai}, it is a consequence of the 
proofs of Propositions~\ref{prop:42a}, \ref{prop:42b} and
Theorem~\ref{thm:suitespec}. Thanks to Proposition~\ref{prop:comp4},
we can define for $\sigma$ small enough a lift of the morphism of
$\Lambda_{f^\star u}$-complex $\Gamma_\star$ between $C_\star
(L,\varphi_t^{\alpha+dH_t},J_t)$ and  
$C_\star (L,\varphi_t^{\alpha+ \sigma \eta + dH_t},J'_t)$ which
induces an isomorphism in homology:
$$FH(L,u) \simeq FH(L, (1+\sigma) u).$$
In order to relate $FH(L, \sigma u)$ and $H_\star (L,f^\star u)$, we
use a lift of the morphism $\Gamma^0_\star$ defined in the proof of
Proposition~\ref{prop:42b} and a spectral sequence analogous to the
one in Theorem~\ref{thm:suitespec} with the first page expressed in
terms of $H_\star(L,f^\star u)$ and the 
$\Lambda_{f^\star u}$-module $A= \Lambda_{f^\star u} [T,T^{-1}]$.
\hfill \qedsymbol\\

When $M$ is the total space of a fibration on the circle, 
Theorem~\ref{thm:suitespecintro} enables us to prove
Theorem~\ref{thm:princ} in the case where 
$f_{\star}: \pi_{1}(L) \longrightarrow \pi_{1}(M)$ is
surjective. Thanks to Remark~\ref{rem:hypsurj}, this will be enough 
to prove the theorem under the hypothesis  $N_L \geq \dim(M)+1$ or 
($N_L = \dim(M)$ and $[\pi_1(M):\pi_1(L)]$ is finite).\\

\noindent
\textit{\textbf{Proof of Theorem~\ref{thm:princ} in the case
    \mathversion{bold} $N_L \geq \dim(M)+1$. \mathversion{normal}}}  
Since the manifold $M$ is the total space of a fibration on the
circle, there exists a closed one-form~$\alpha$ that does not vanish
on $M$. Consider the symplectic isotopy $\varphi_t$ of $\ctg M$
spanned by $X^\alpha$: 
$$\varphi_t(p,q) = (p+t \alpha_q,q).$$
For $T$ large enough, $\varphi_T(L) \cap L = \emptyset$ and without
restricting generality we can assume that $T=1$.  
Then the Floer complex defined in Section \ref{sec:cpxFloer} is empty 
and the Floer homology $FH(L,u)$ is trivial.

But, as $u \neq 0$ and as $f_{\star}: \pi_1(L) \rightarrow \pi_1(M)$
is surjective, $f^{\star}u \neq 0$ and by
Proposition~\ref{prop:prelibre}, if the presentation of the
fundamental group of $L$ satisfies $p-q \geq 2$ or if the fundamental
group is a free product of two non trivial groups, then
$H_1(L,f^\star u) \neq 0$. 

If $N_L \geq \dim(M)+2$, then by Theorem~\ref{thm:suitespecintro}, the 
Floer homology of $L$ is isomorphic to the Novikov homology 
$H_\star(L, f^\star u)$ and this contradicts 
$H_1(L,f^\star u) \neq 0$.
If $N_L=\dim(M)+1 \geq 2$, then the spectral sequence defined in 
Theorem~\ref{thm:suitespec} collapses at page $\kappa+1=2$ and 
converges to the Floer homology of $L$.
But for $p=0$ and $q=1$, 
$$[\partial_1]: H_1(L,f^\star u) \longrightarrow H_{2-N_L}(L,f^\star u)=\{0\}$$
and 
$$[\partial_1]: H_{N_L}(L,f^\star u)=\{0\} \longrightarrow H_1(L,f^\star u),$$
hence $E^2_{0,1} = H_1(L,f^\star u)$.
We get also a contradiction in this case. \hfill \qedsymbol\\

\noindent
\textit{\textbf{Proof of Theorem~\ref{thm:princ} in the case
\mathversion{bold}  $N_L \geq \dim(M)$.\mathversion{normal}}} 
  We can even extend the result to the case $N_L = \dim(M)$ 
  thanks to a property of the Novikov homology 
  $H_\star(L,f^\star u)$ (see Latour~\cite{MR1320607} or
  Levitt~\cite{MR884804}): since $f^{\star}u \neq 0$, there exists in
  the class of $f^{\star}u$ a one-form $\alpha'$ which has no critical
  point of index $0$ or $\dim(M)$. In particular, the groups
  $H_0(L,f^\star u)$ and $H_{\dim(M)}(L,f^\star u)$ are trivial. 
  
  By Theorem~\ref{thm:suitespecintro}, there exists a spectral sequence
  converging to the Floer homology of $L$ and whose first page can be
  described with the Novikov homology of $L$. 
  In the case $N_L = \dim(M) \geq 2$, the spectral sequence collapses
  at the second page,
  $$[\partial_1]: H_1(L,f^\star u) \longrightarrow 
  H_{2-N_L}(L,f^\star u)=\{0\}$$
  and 
  $$[\partial_1]: H_{N_L}(L,f^\star u)=\{0\} \longrightarrow 
  H_1(L,f^\star u),$$
  so that $E^2_{0,1} = H_1(L,f^\star u)$. 
  As before, this leads to a contradiction. \hfill \qedsymbol\\

\section*{Appendix: Proof of Lemma~\ref{lem:rev}}
\label{preuvelemmerev}

 \begin{description}

  \item[(i)] Assume first that $\bL$ is path-connected. Choose a base
    point $\ell$ in $L$ and let $m=i(\ell)$ be its image in $\ctg M$.
    Choose also a lift $\tell$ of $\ell$ in $\bL$. 
    Let $g$ be an element of $\pi_1(M)$. 
    As $\bL$ is path-connected, there exists a path $\tgamma$ from 
    $\tell$ to $\tell^g$ in $\bL$. 
    The image $(\pi \circ \tp \circ \ti)(\tgamma)$ of that path in $M$
    is a loop representing $g$ and its image in $L$ is thus a loop
    $\gamma$ such that $f_\star([\gamma])=g$.

    Conversely, assume that the map $\pi_1(L) \rightarrow \pi_1(M)$ is 
    surjective. 
    As $L$ is supposed to be path-connected, to prove that $\bL$ is  
    path-connected, it is enough to prove that two points in the same  
    fiber can be joined by a path in $\bL$. 
    Let $\tell_1$ and $\tell_2$ two elements of $\bL$ in the fiber of
    the point $\ell$ of $L$. 
    The two points $\ti(\tell_1)$ and $\ti(\tell_2)$ of $\ctg \tM$ can
    be joined in $\ctg \tM$ by a path which projection on $\ctg M$ is
    a loop $c$ such that 
    $\ti(\tell_2)= \left({\ti(\tell_1)}\right)^{[c]}$. 
    But by assumption, the element $[c]$ of the group $\pi_1(M)$ has
    an antecedent in $\pi_1(L)$. 
    Let $\gamma$ be a loop in $L$ based in $\ell$ representing this
    antecedent. 
    If $\tgamma$ is the lift with starting point $\tell_1$ of $\gamma$
    in $\bL$, its endpoint $\tell_1^{\;[\gamma]}$ must have 
    $\left({\ti(\tell_1)}\right)^{[c]}$ as image by $\ti$, and this
    proves that $\tgamma$ is a path from $\tell_1$ to $\tell_2$.

  \item[(ii)] By definition, $\bL$ fits into the following commutative 
    diagram:
    $$
    \xymatrix{
      \bL \ar[r]^-{\ti} \ar[d] & \ctg \tM \ar[d]^-{\tp} \\
      L   \ar[r]^-{i}    & \ctg M
    }
    $$
    As $\ctg \tM$ is simply connected, the diagram induced on the 
    fundamental groups gives that 
    $\im (\pi_1(\bL) \rightarrow \pi_1(L))$ 
    is included in $K$.
    Conversely, any loop of $L$ whose image in $M$ is homotopic to the 
    constant path can be lifted to $\ctg \tM$ in a loop of $\bL$.

 \item[(iii)] The embedding is monotone because the symplectic
   structure (and the Liouville form) on $\ctg \tM$ are obtained by
   taking the pull-back of those of~$\ctg M$.
  \end{description}

\bibliographystyle{plain}
\bibliography{Biblio}

\ \\
Institut de recherche mathématique avancée, Université Louis Pasteur\\
7, rue René Descartes, 67 084 Strasbourg, France.\\
e-mail address: gadbled@math.u-strasbg.fr

\end{document}